\newtheorem{thm}{Theorem}[section]
\newtheorem{lem}[thm]{Lemma}
\newtheorem{cor}[thm]{Corollary}
\newtheorem{prop}[thm]{Proposition}
\newtheorem{rem}{Remark}[section]
\numberwithin{equation}{section}
\renewcommand{\a}{\alpha}
\renewcommand{\b}{\beta}
\newcommand{\de}{\delta}
\newcommand{\fa}{\varphi}
\newcommand{\ga}{\gamma}
\renewcommand{\k}{\kappa}
\newcommand{\la}{\lambda}
\renewcommand{\th}{\theta}
\newcommand{\si}{\sigma}
\renewcommand{\t}{\tau}
\newcommand{\De}{\Delta}
\newcommand{\Ga}{\Gamma}
\newcommand{\La}{\Lambda}
\newcommand{\lan}{\langle}
\newcommand{\ran}{\rangle}
\def\R{{\mathbb{R}}}
\def\N{{\mathbb{N}}}
\def\Z{{\mathbb{Z}}}
\def\T{{\mathbb{T}}}
\title{$L^p$-Boltzmann-Gibbs principle
\\ via  Littlewood-Paley-Stein inequality}
\author{Tadahisa Funaki}
\date{\today } 
\begin{document}
\maketitle

{\it
This article is dedicated to Claudio Landim on the occasion of his 60th birthday.
}

\begin{abstract} 
In this paper, we establish the Boltzmann-Gibbs principle in the $L^p$ sense
by applying the Littlewood-Paley-Stein inequality.  Our model is
an asymmetric Ginzburg-Landau interface model
on a one-dimensional periodic lattice. Assuming convexity of the potential,
we derive detailed error estimates, particularly their dependence on the size 
of the system and the size of the region on which the
sample average is taken.  Notably, the estimates are uniform in the strength of
the asymmetry.
\footnote{
\hskip -6mm 
Beijing Institute of Mathematical Sciences and Applications, 
No.\ 544 Hefangkou, Huairou District, Beijing 101408, China.}
\end{abstract}

\section{Introduction}

The Boltzmann-Gibbs principle, first formulated by Rost \cite{R}
in 1983, was developed to study equilibrium fluctuations in interacting diffusions
in a setting of the hydrodynamic limit; see also \cite{BR}, \cite{DPSW} and \cite{KL}.
This principle is now widely applied in the study of the large-scale behavior
of interacting systems such as the hydrodynamic limit (at the level of the
law of large numbers) in a non-equilibrium situation, the linear fluctuation limit 
(at the level of CLT) around it and also the KPZ nonlinear fluctuation
limit sometimes called the weak universality. See below for a more detailed explanation.

In a large-size randomly evolving interacting system related to statistical physics, 
the large-scale space-time sample average of an observable can be
replaced by its ensemble average due to the ergodic
property of the system.  When the system has a conserved quantity,
its ensembles or the equilibrium states are not unique, but are given as
a set of probability measures, say $\{\mu_u\}_u$ on a configuration space
$\mathcal{X}$ parametrized by a conserved quantity $u$.
We denote an element of $\mathcal{X}$ by $\eta$ and consider a
function $f(\eta)$ on $\mathcal{X}$, which is called an observable.
The spatial shift operator $\t_x$ acts on $\mathcal{X}$.  To clarify our setup,
let $\mathcal{X}=\R^{\Z^d}$ and $\t_x\eta := (\eta(y+x))_{y\in \Z^d}$ for
$x\in \Z^d$ and $\eta=(\eta(y))_{y\in \Z^d}\in \mathcal{X}$.  We will take $d=1$ later.
One would then expect the following replacement of $f$ under the
large-scale space-time average:
$$
f(\t_x\eta) \approx \lan f\ran (\eta^{(\ell)}(x)),
$$
for large $\ell$,
where $\lan f\ran(u) = E^{\mu_u}[f]$ denotes the ensemble average
of $f$ under $\mu_u$, and $\eta^{(\ell)}(x)$ is the sample average of
the variable $\eta=(\eta(y))_{y\in \Z^d}$ on a large box $\La_\ell = [-\ell,\ell]^d\cap
\Z^d$ centered at $x$:
\begin{align} \label{eq:1.1}
\eta^{(\ell)}(x) := \frac1{|\La_\ell|} \sum_{y\in \La_\ell} \eta(x+y).
\end{align}
Furthermore, when considering the system under $\mu_u$, 
we would have the Taylor expansion:
$$
\lan f\ran (\eta^{(\ell)}(x)) \approx \lan f\ran (u) + \lan f\ran' (u) 
(\eta^{(\ell)}(x) -u) + \tfrac12 \lan f\ran'' (u) 
(\eta^{(\ell)}(x) -u)^2 + \cdots,
$$
where $\lan f\ran' $ and $\lan f\ran''$ denote the first and second 
derivatives of $\lan f\ran$ in $u$, assuming $u\in \R$.
Thus, we expect to have a replacement of $f$ with linear and quadratic functions
of $\eta^{(\ell)}$:
\begin{align} \label{eq:1.BG}
f(\t_x\eta)  \approx \lan f\ran (u) + \lan f\ran' (u) 
(\eta^{(\ell)}(x) -u) + \tfrac12 \lan f\ran'' (u) 
(\eta^{(\ell)}(x) -u)^2 + \cdots,
\end{align}
under $\mu_u$ and under the large-scale space-time average.
This replacement, especially its error estimate, is called the 
Boltzmann-Gibbs principle of first-order when we consider up to the linear term, 
or of second-order when we consider up to the quadratic term.  Our main result is
stated in Theorem \ref{BG-2-B} in an equilibrium situation.  It precisely formulates
this replacement, in particular, in the $L^p$ sense.

The Boltzmann-Gibbs principle
of first-order in non-equilibrium situation is useful for studying the hydrodynamic limit.
For non-gradient models, this principle is known as
a gradient replacement, whereby the diverging non-gradient term
is replaced by a linear function of $\eta$ (see \cite{V93}, \cite{FUY},
 \cite{F24}, \cite{FGW24}), and 
it has been efficiently used also for the Glauber-Kawasaki dynamics 
\cite{FvMST} and the Glauber-zero-range process \cite{EFHPS} of gradient type.
The Boltzmann-Gibbs principle of first-order in equilibrium was shown
for non-gradient model at the CLT scaling \cite{Chang}, \cite{F96},  \cite{Lu}.
Non-equilibrium fluctuations were established in \cite{CY}, \cite{FGJS}, 
\cite{jm1}, \cite{jm2}, and the nearly equilibrium case in \cite{FLS}, \cite{gjmm}.

The Boltzmann-Gibbs principle of second-order in equilibrium was shown by
Gon\c{c}alves, Jara and others \cite{GoJ}, \cite{GJS}, \cite{GO} and 
also \cite{DGP}, \cite{BFS}.  They established the weak universality in the KPZ
scaling limit in a weakly asymmetric regime for interacting particle systems
based on the martingale problem approach called the energy solution.
See Yang \cite{Y1}, \cite{Y2}, \cite{Y3} for non-equilibrium case. 
A similar formulation appears in a tagged particle problem and the CLT for additive 
functionals; see \cite{KV}, \cite{KLO}.

The Boltzmann-Gibbs principle has been proved in almost all cases in the $L^1$ or
$L^2$ sense, or in probability sense, except \cite{GJ} showing in the $L^p$ sense in a 
special Gaussian-based setting.  Indeed,
\cite{GJ} shows an $L^p$ estimate that relies on a hypercontractivity estimate
for Gaussian measures, in which the constant depends on the order of the polynomial.
This method does not work in the present setting.  
The difficulty in showing an $L^p$ estimate lies in the treatment of the
$L^{p/2}$ integral of carr\'e du champ that appears in the proof.  
If $p=2$, it can be rewritten as a Dirichlet form allowing for easy  
integration by parts.  For $p>2$, this method does not work.  In the 
present article, we instead apply a Sobolev-type inequality followed by the
Littlewood-Paley-Stein inequality.  It is important that the constant in this inequality
is uniform with respect to the size of
the system being studied.  Littlewood-Paley theory is useful for
extending $L^2$ results to $L^p$, so it is natural to apply it in our situation.
The Littlewood-Paley-Stein inequality refers to the upper and lower bounds 
for the $L^p$-norms of the so-called Littlewood-Paley $G$-functions.
However, we simply
call the associated Sobolev-type inequality the Littlewood-Paley-Stein inequality.

Section \ref{sec:2} introduces our model, that is, the one-dimensional 
asymmetric Ginzburg-Landau interface model. This model was studied in \cite{DGP}
on $\Z$ instead of our $\T_N$.   Under the diffusive scaling,
our main result is stated in Theorem \ref{BG-2-B}.
This theorem provides the Boltzmann-Gibbs principle of the first- and 
second-orders in the $L^p$ sense, that is, \eqref{eq:1.BG} under 
the space-time average
and in an equilibrium situation with $L^p$ error estimates.
Notably, the estimate is uniform in the strength $\ga$ of the asymmetry,
which allows us to apply it to weak asymmetry $\ga N^{-1/2}$ for 
KPZ weak universality, but also includes strong asymmetry $\ga$.
After several preparatory steps in Section \ref{sec:4}, we prove Theorem 
\ref{BG-2-B} in Section \ref{sec:5}.  Appendix \ref{sec:B} is 
devoted to a discussion of the Littlewood-Paley-Stein inequality.

Currently, our discussion is limited to the one-dimensional Ginzburg-Landau 
interface model.  For this model, the Littlewood-Paley-Stein inequality, which plays 
a fundamental role, is available due to the results of Bakry \cite{Ba87}
at least when the potential of the model is convex; see Section \ref{sec:4.3}.
It would be interesting to extend the Littlewood-Paley-Stein inequality to 
a non-convex potential (see Appendix \ref{sec:B}) or to a certain class of
interacting particle systems such as Kawasaki
dynamics.  Related to the latter, \cite{CCH} discusses it on a graph, but
the uniformity of the estimate with respect to the size of the graphs is not studied.

The weak universality of SPDE models was shown by \cite{HQ}, \cite{HS17}, 
\cite{HX},  \cite{KZ25}. These are essentially established as pathwise limits based 
on the theory of regularity structures, though some of them are formulated as
the convergence in law.  The original motivation of the present article
comes from showing the KPZ scaling limit for interacting systems
by means of a pathwise approach, such as the regularity structures or
the paracontrolled calculus, rather than the method of the martingale problem. 
This approach is natural in view of the results and methods for the SPDE models.
We expected the $L^p$-Boltzmann-Gibbs principle would be useful, but
it is unclear at present.  Recently, \cite{HMW} proved the KPZ scaling limit 
for a weakly asymmetric simple exclusion process within
the framework of regularity structures.

\section{One-dimensional Ginzburg-Landau interface model}
\label{sec:2}

Let us introduce our model, the $\eta$-process.  It is essentially
the same as the process of the height differences $u=(u_t(i))_{i\in \Z}$
studied in \cite{DGP}, except we consider it on $\T_N$ instead of
$\Z$.  In a symmetric case, the $\eta$-process
 is the same as that considered in \cite{GPV}, but we introduce
an asymmetry while maintaining stationary measures.  
The $\phi$-process of heights
is a discrete integral in space of the $\eta$-process in the sense of 
\eqref{eq:eta-phi} or in Remark \ref{rem:4.1-A}.  It helps to quickly
calculate the generator and
carr\'e du champ of the $\eta$-process; see Remark \ref{rem:1.3}.
The $\phi$-process in symmetric case was studied in \cite{FS97}, \cite{F05}
in higher dimensions.

\subsection{Height process $\phi_t$}
\label{sec:2.1}

Let $\T_N =\{1,2,\ldots,N\}$ be a one-dimensional discrete lattice
of size $N\in \N$.  We will introduce the modified periodic and
periodic boundary conditions \eqref{eq:2.mpb}, \eqref{eq:eta-p} for the
$\phi$- and $\eta$-variables.  For $\phi= (\phi(x))_{x\in \T_N} \in 
\mathcal{Y}_N:= \R^N\equiv \R^{\T_N}$
called a height function, let a Hamiltonian
\begin{align}  \label{eq:H}
H(\phi)\equiv H_N(\phi) = \sum_{x\in \T_N} V(\phi(x+1)-\phi(x))
\end{align}
be given, where $V=V(\zeta)$ is a $C^2$ function on $\R$ and grows
at least quadratically as $|\zeta|\to\infty$; see \eqref{eq:2.V} below.
In the sum \eqref{eq:H}, $\phi(N+1)$ is defined by the modified periodic boundary
condition $\phi(N+1)= \phi(1)+ Nm$, which can be naturally extended to $\Z$ as
\begin{equation} \label{eq:2.mpb}
\phi(x+N) = \phi(x) + Nm, \quad x \in \Z,
\end{equation}
for a fixed $m\in \R$.  We denote $H_N(\phi)$ by $H_{N,m}(\phi)$ under
this boundary condition.

We consider the Langevin dynamics $\phi_t= (\phi_t(x))_{x\in \T_N} \in 
\mathcal{Y}_N$ associated with $H$ adding an asymmetry:
\begin{align}  \label{eq:phi_t}
d\phi_t(x)= pV'(\phi_t(x+1)-\phi_t(x)) dt
- q V'(\phi_t(x)-\phi_t(x-1)) dt  + dB_t(x), \;\, x \in \T_N,
\end{align}
with the modified periodic boundary condition
\begin{equation*}
\phi_t(N+1) = \phi_t(1) + Nm \quad \text{and}\quad 
\phi_t(0) = \phi_t(N) - Nm,
\end{equation*}
where $(B_t(x))_{x\in \T_N}$ is a family of independent Brownian motions, and
$p=1/2+\ga, q= 1/2-\ga$ with the parameter $\ga\in \R$ which describes
the strength of the asymmetry.  

The unnormalizable measure 
$d\hat\mu_{N,m}  = e^{-H_{N,m}(\phi)}d\phi, d\phi=\prod_{x\in \T_N}d\phi(x)$, on 
$\mathcal{Y}_N$ is stationary for the process
$\phi_t$, determined by \eqref{eq:phi_t}, for every $\ga\in \R$ and symmetric
(reversible) if $\ga=0$; see Lemma \ref{lem:Rev+Stat-phi} and Remark
\ref{rem:2.1-A}, to make $\hat\mu_{N,m}$ normalizable, below.

\subsection{Tilt process $\eta_t$}

The tilt or slope variables, denoted by $\eta=(\eta(x))_{x\in \T_N} \in 
\mathcal{X}_N :=\R^N$, are defined from the height variables $\phi$ by 
\begin{align} \label{eq:eta-phi}
\eta(x):= \phi(x+1)-\phi(x) \; \big(\!\equiv \nabla\phi(x)\big), \quad x \in \T_N.
\end{align}
The modified periodic boundary condition \eqref{eq:2.mpb} leads to the
periodic boundary condition for $\eta$ extended to $\Z$
\begin{align} \label{eq:eta-p}
\eta(x+N) = \eta(x), \quad x \in \Z.
\end{align}
Then, the equation \eqref{eq:phi_t} 
for the height variables $\phi_t$ leads to
the equation for the tilt variables $\eta_t=(\eta_t(x))_{x\in \T_N}\in \mathcal{X}_N$:
\begin{align} \label{eq:eta_t-pert-2}
d\eta_t(x) =& p\Big\{ V'(\eta_t(x+1)) - V'(\eta_t(x)) \Big\}dt
+ q \Big\{ V'(\eta_t(x-1)) - V'(\eta_t(x)) \Big\} dt \\
& \qquad + dB_t(x+1) - dB_t(x),  \quad x \in \T_N,  \notag
\end{align}
with $B_t(N+1) = B_t(1)$ and the periodic boundary condition
\begin{equation*}
\eta_t(N+1) = \eta_t(1) \quad \text{and}\quad 
\eta_t(0) = \eta_t(N),
\end{equation*}
cf.\ \cite{DGP}, p.552 on $\Z$;  \cite{LNS} for a totally asymmetric case
($p=0, q=1 \; \text{i.e.}\ \ga= -1/2$); also \cite{F05}, (9.2) and  
\cite{GPV}  for a symmetric case (i.e.\ $\ga=0$).
Note that the $\eta$-process conserves the sum of $\eta_t(x)$ in $x\in \T_N$
and, from \eqref{eq:2.mpb} and \eqref{eq:eta-phi}, we have
\begin{align}  \label{eq:2.cons}
\sum_{x\in\T_N} \eta_t(x) = \sum_{x\in\T_N} \eta_0(x) \; = Nm,
\end{align}
for every $t\ge 0$; see Section \ref{sec:1.16.1}.

\subsection{Stationarity, symmetric part and carr\'e du champ}

Now, let us discuss the stationarity in more detail and summarize the related facts.

\subsubsection{The process $\phi_t$}
\label{sec:2.3.1}

The generator at the $\phi$-level is easier to calculate, and this covers the
$\eta$-level computation; see, for example, \cite{FS97} and Remarks
\ref{rem:1.3} and \ref{rem:4.1-A}.  The generator $L$ 
of the $\phi$-process, which is determined by
\eqref{eq:phi_t}, is given as the sum $L=L_0+L_1$ of the symmetric part $L_0$ and
the asymmetric part $L_1$ written as
\begin{align*} 
L_0 & = \frac12 \sum_{x\in \T_N} \Big(\partial_{\phi(x)}^2 + 
\big(V'(\phi(x+1)-\phi(x))- V'(\phi(x)-\phi(x-1)) \big) \partial_{\phi(x)} \Big)\\
& = \frac12 \sum_{x\in \T_N} \big( \partial_{\phi(x)}^2 - \partial_{\phi(x)} H \cdot \partial_{\phi(x)} \big)\\
L_1 & = \ga \sum_{x\in \T_N} \big(V'(\phi(x+1)-\phi(x)) + V'(\phi(x)-\phi(x-1)) \big) \partial_{\phi(x)},
\end{align*}
where $\partial_{\phi(x)} = \frac{\partial}{\partial\phi(x)}$ and
$H(\phi)$ is defined in \eqref{eq:H}.  
The operator $L_0$ is sometimes called the distorted Laplacian.
The carr\'e du champ operator acting on functions $F=F(\phi)$ on $\R^N$ is defined by
\begin{align} \label{eq:2.CC}
\Ga(F)(\phi) := LF^2(\phi)-2F(\phi) LF(\phi).
\end{align}
Note that the contribution of the first-order differential operators (i.e.\
the second term of $L_0$ and $L_1$) cancels out.  In particular, $\Ga(F)$ is
determined from the symmetric part $L_0$.

The operators $L_0$ and $L_1$ can also be written, by the summation by parts, as
\begin{align*} 
L_0 & = \frac12 \sum_{x\in \T_N} \Big(\partial_{\phi(x)}^2 + 
V'(\phi(x+1)-\phi(x)) \big(\partial_{\phi(x)} -\partial_{\phi(x+1)} \big) \Big) \\
L_1 & = \ga \sum_{x\in \T_N} V'(\phi(x+1)-\phi(x))
\big(\partial_{\phi(x)} +\partial_{\phi(x+1)} \big).
\end{align*}
Note that, by \eqref{eq:2.mpb},
$\partial_{\phi(N+1)}= \partial_{\phi(1)}$ and $\partial_{\phi(0)} = \partial_{\phi(N)}$.

Recall the measure $\hat\mu_{N,m}$ on $\mathcal{Y}_N=\R^N$
defined at the end of Section \ref{sec:2.1}.

\begin{lem}  \label{lem:Rev+Stat-phi}
For functions $F=F(\phi)$ and $G=G(\phi) \in C^2(\R^N)$, if all integrals
in the following converge, we have
\begin{align*} 
& \int_{\R^N} GL_0F d\hat\mu_{N,m} = \int_{\R^N} FL_0G d\hat\mu_{N,m}
= -\frac12 \int_{\R^N} \sum_{x\in \T_N} \partial_{\phi(x)} F \cdot \partial_{\phi(x)} G \, d\hat\mu_{N,m}, \\
& \int_{\R^N} G L_1 F d\hat\mu_{N,m} = -\int_{\R^N} F L_1 G d\hat\mu_{N,m},
\intertext{and}
& \Ga(F) = \sum_{x\in \T_N} \big(\partial_{\phi(x)} F\big)^2 =: |D_\phi F|^2,
\end{align*}
where $D_\phi F = (\partial_{\phi(x)} F)_{x\in \T_N}$.
\end{lem}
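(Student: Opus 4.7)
The plan is to derive all three assertions by direct integration by parts against the density $e^{-H_{N,m}}$, exploiting the periodicity built into the boundary condition \eqref{eq:2.mpb}. Parts (1) and (3) are classical facts about the distorted Laplacian $L_0$ associated with the potential $H$; the essential content is the antisymmetry of $L_1$ in (2), which I expect to be the main technical point.

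For (1) I will start from the first representation $L_0 = \frac12 \sum_x \bigl(\partial_{\phi(x)}^2 - \partial_{\phi(x)}H \cdot \partial_{\phi(x)}\bigr)$ and apply integration by parts in each $\phi(x)$ to $\int G\,\partial_{\phi(x)}^2 F\, e^{-H_{N,m}}d\phi$. Boundary contributions at infinity vanish under the stated hypothesis that all integrals converge, which is consistent with the at-least-quadratic growth \eqref{eq:2.V} making $e^{-H_{N,m}}$ decay. Differentiation of $e^{-H_{N,m}}$ produces exactly the term $\int G\,\partial_{\phi(x)}H \cdot \partial_{\phi(x)}F\,d\hat\mu_{N,m}$, which cancels the drift contribution in $L_0$ and leaves the symmetric Dirichlet form $-\tfrac12\int\sum_x\partial_{\phi(x)}F\cdot\partial_{\phi(x)}G\,d\hat\mu_{N,m}$. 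Assertion (3) then follows because $L_1$ is a first-order derivation, so $L_1(F^2)=2F\,L_1F$ contributes nothing to $\Ga(F)$, while the Laplacian part of $L_0$ produces $\sum_x(\partial_{\phi(x)}F)^2$ via the Leibniz rule $\partial_{\phi(x)}^2(F^2) = 2F\partial_{\phi(x)}^2F + 2(\partial_{\phi(x)}F)^2$, the first-order part of $L_0$ again contributing nothing.

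For (2), since $L_1$ is a derivation one has $L_1(FG) = F\,L_1G + G\,L_1F$, so the claim is equivalent to $\int L_1\Phi\, d\hat\mu_{N,m} = 0$ for all sufficiently nice $\Phi$. Writing $L_1 = \ga \sum_x a_x(\phi)\partial_{\phi(x)}$ with $a_x(\phi) := V'(\phi(x+1)-\phi(x)) + V'(\phi(x)-\phi(x-1))$, integration by parts reduces the problem to the pointwise identity
\[
\sum_{x\in\T_N}\bigl(\partial_{\phi(x)}a_x(\phi) - a_x(\phi)\,\partial_{\phi(x)}H(\phi)\bigr) = 0.
\]
Setting $W_x := V'(\phi(x+1)-\phi(x))$, one has $a_x = W_x + W_{x-1}$, $\partial_{\phi(x)}H = W_{x-1}-W_x$, and $\partial_{\phi(x)}a_x = V''(\phi(x)-\phi(x-1)) - V''(\phi(x+1)-\phi(x))$. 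The first sum telescopes to zero because $W$ and $V''(\nabla\phi(\cdot))$ are $N$-periodic as consequences of \eqref{eq:2.mpb}, and the second becomes $\sum_x(W_x+W_{x-1})(W_{x-1}-W_x) = \sum_x(W_{x-1}^2-W_x^2)$, which telescopes for the same reason. Antisymmetry of $L_1$ follows.

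The ``hard part'', though purely algebraic, is precisely this telescoping cancellation. It reflects why the authors chose the specific symmetric combination $W_x+W_{x-1}$ for the asymmetric drift: exactly this combination produces a vector field that is divergence-free with respect to the Gibbs density $e^{-H_{N,m}}$, so that $\hat\mu_{N,m}$ remains stationary for every $\ga\in\R$. Any other combination would introduce a nonzero correction and break the identity.
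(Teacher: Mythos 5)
Your proof is correct. The paper omits this proof entirely as ``simple and straightforward,'' and your argument --- integration by parts against $e^{-H_{N,m}}$ for the symmetry of $L_0$, the derivation property for $\Ga(F)$, and the telescoping identity $\sum_{x\in\T_N}\bigl(\partial_{\phi(x)}a_x - a_x\,\partial_{\phi(x)}H\bigr)=0$ (valid since $W_x$ and $V''(\nabla\phi(x))$ are $N$-periodic under \eqref{eq:2.mpb}) for the antisymmetry of $L_1$ --- is exactly the intended computation.
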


The proof is simple and straightforward, so it is omitted.

\begin{rem}  \label{rem:2.1-A}
We write $\lq\lq\phi\sim\phi'$'' when $\lq\lq\phi(x)-\phi'(x)=$ const'' holds
for two elements $\phi=(\phi(x))_{x\in \T_N}$ and $\phi'=(\phi'(x))_{x\in \T_N}$
of $\mathcal{Y}_N= \R^N$.  Then, $\lq\lq\sim$'' is an equivalence relation
on $\mathcal{Y}_N$, and one can consider the quotient space
$\widetilde{\mathcal{Y}}_N := \mathcal{Y}_N/\!\sim$.  The $\phi$-process
$\phi_t$ on $\mathcal{Y}_N$ has the following property.  If two
initial values satisfy $\lq\lq\phi_0\sim\phi_0'$'', then $\lq\lq\phi_t\sim\phi_t'$'' 
holds for $t>0$.  Therefore, the $\phi$-process is defined on the
quotient space $\widetilde{\mathcal{Y}}_N$.  A function $F$ on
$\widetilde{\mathcal{Y}}_N$ is a function on $\mathcal{Y}_N$ which satisfies
$F(\phi)=F(\phi')$ if $\phi\sim\phi'$.  The operators $L_0$ and $L_1$ are
accordingly considered on $\widetilde{\mathcal{Y}}_N$.
The Hamiltonian $H_{N,m}(\phi)$ has the above property so it is a function
on $\widetilde{\mathcal{Y}}_N$.

If $\eta=\nabla\phi$, $\eta'=\nabla\phi'$ on $\T_N$ and
$\phi\sim\phi'$, then we have $\eta=\eta'$ by \eqref{eq:eta-phi}.
This means that the $\phi$-process on $\widetilde{\mathcal{Y}}_N$ and 
the $\eta$-process on $\mathcal{X}_N = \R^N$ are essentially the same.
In this sense, the stationary measure $\hat\mu_{N,m}$ on $\mathcal{Y}_N$
is normalizable if we consider it on $\widetilde{\mathcal{Y}}_N$.
Recall that $H_{N,m}(\phi)$  is a function on $\widetilde{\mathcal{Y}}_N$.
We denote it by $\bar\mu_{N,m}$, which is essentially the same as
$\mu_{N,m}$ in \eqref{eq:2.13} below under the correspondence
$\eta=\nabla\phi$ on $\T_N$.
\end{rem}

\subsubsection{The process $\eta_t$}
\label{sec:2.3.2}

Here, we summarize the generator $L=L_0+L_1$ of the process $\eta_t$, which
is determined by \eqref{eq:eta_t-pert-2},  the Dirichlet form and the carr\'e du 
champ, which are determined from the symmetric part $L_0$.
Later in Section \ref{sec:4.2}, we will consider on a region $\La_\ell$ (instead of the 
whole lattice $\T_N$) with boundary conditions.

\begin{lem}  \label{lem:1.10}
The generator $L=L_0+L_1$ of the process $\eta_t$, determined by 
\eqref{eq:eta_t-pert-2}, is given by
\begin{align*} 
L_0 & = \sum_{x\in \T_N} \big( \partial_{\eta(x)}^2 - \partial_{\eta(x)}\partial_{\eta(x-1)}
\big)  + \frac12 \sum_{x\in \T_N} \big( V'(\eta(x+1)) -2V'(\eta(x))+ V'(\eta(x-1)) \big) \partial_{\eta(x)} \\
& = \frac12 \sum_{x\in \T_N} 
\Big( \big( \partial_{\eta(x)} - \partial_{\eta(x-1)} \big)^2
- \big( V'(\eta(x)) - V'(\eta(x-1)) \big)
\big( \partial_{\eta(x)} - \partial_{\eta(x-1)} \big)\Big),   \\
L_1 & = \ga \sum_{x\in \T_N} \big( V'(\eta(x+1)) - V'(\eta(x-1)) \big) \partial_{\eta(x)},
\end{align*}
where $\partial_{\eta(x)} = \frac{\partial}{\partial\eta(x)}$.
Note that $\eta(N+1)=\eta(1), \eta(0)= \eta(N)$ and $\partial_{\eta(0)}
= \partial_{\eta(N)}$ by \eqref{eq:eta-p}.
\end{lem}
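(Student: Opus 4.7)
The plan is to read $L$ directly off the SDE \eqref{eq:eta_t-pert-2} via It\^o's formula, identify the symmetric and antisymmetric pieces, and then verify the second, manifestly symmetric, form of $L_0$ by discrete summation by parts on $\T_N$. A shorter alternative is to push forward the $\phi$-level generator of Section \ref{sec:2.3.1} through the change of variables $\eta = \nabla\phi$, as suggested in Remark \ref{rem:1.3}; both routes give the same answer, and I would prefer the direct route since the SDE for $\eta_t$ is already in hand.

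For the second-order part, the noise $dM_t(x):= dB_t(x+1)-dB_t(x)$ has covariation matrix $a(x,y) = 2\de_{x,y} - \de_{x,y+1} - \de_{x,y-1}$ with indices mod $N$. Hence the second-order part of $L$ is $\tfrac12 \sum_{x,y} a(x,y)\partial_{\eta(x)}\partial_{\eta(y)}$, and relabeling the cross terms via the $N$-periodicity from \eqref{eq:eta-p} collapses this to $\sum_{x\in\T_N}(\partial_{\eta(x)}^2 - \partial_{\eta(x)}\partial_{\eta(x-1)})$, matching the first form in the lemma. For the drift, substituting $p=\tfrac12+\ga$ and $q=\tfrac12-\ga$ into \eqref{eq:eta_t-pert-2} gives
\begin{align*}
b(x) = \tfrac{1}{2}\bigl(V'(\eta(x+1)) - 2V'(\eta(x)) + V'(\eta(x-1))\bigr) + \ga\bigl(V'(\eta(x+1)) - V'(\eta(x-1))\bigr),
\end{align*}
which is precisely the first-order part of $L_0$ plus the claimed $L_1$.

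To derive the second, symmetrized form of $L_0$, I would expand $(\partial_{\eta(x)} - \partial_{\eta(x-1)})^2 = \partial_{\eta(x)}^2 - 2\partial_{\eta(x)}\partial_{\eta(x-1)} + \partial_{\eta(x-1)}^2$ and sum over $x\in\T_N$; $N$-periodicity of $\partial_{\eta(\cdot)}$ converts this to $2\sum_x(\partial_{\eta(x)}^2 - \partial_{\eta(x)}\partial_{\eta(x-1)})$, recovering the second-order part. Likewise, expanding $\bigl(V'(\eta(x))-V'(\eta(x-1))\bigr)\bigl(\partial_{\eta(x)}-\partial_{\eta(x-1)}\bigr)$, summing over $x$, and shifting $x\mapsto x+1$ in the $\partial_{\eta(x-1)}$ piece produces, up to sign, the discrete Laplacian $V'(\eta(x+1))-2V'(\eta(x))+V'(\eta(x-1))$ against $\partial_{\eta(x)}$; combining the two expansions recovers the stated symmetric form.

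The only genuine care required is the boundary bookkeeping: every index shift must use the periodic conventions $\eta(N+1)=\eta(1)$, $\eta(0)=\eta(N)$, and $\partial_{\eta(0)}=\partial_{\eta(N)}$ forced by \eqref{eq:eta-p}, so that all relabelings close without boundary remainders. I do not expect any substantive obstacle beyond this routine bookkeeping; in particular the cancellation of the $L_1$-drift contribution to the carr\'e du champ will follow from the same telescoping as in Lemma \ref{lem:Rev+Stat-phi}.
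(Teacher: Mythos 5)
Your proposal is correct and takes essentially the same route as the paper: Itô's formula applied to \eqref{eq:eta_t-pert-2}, reading off the covariations $d\eta_t(x)d\eta_t(y)$ ($2dt$, $-dt$, $0$) and the drift with $p=\tfrac12+\ga$, $q=\tfrac12-\ga$. The only difference is that you also spell out the summation-by-parts check that the two displayed forms of $L_0$ agree, which the paper leaves implicit.
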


\begin{proof}
For a function $F=F(\eta)$ on $\R^N$, by It\^o's formula,
\begin{align*} 
dF(\eta_t) = \sum_{x\in \T_N} \partial_{\eta(x)}F (\eta_t)\,d\eta_t(x)
+ \frac12 \sum_{x,y\in \T_N}  \partial_{\eta(x)} \partial_{\eta(y)} F(\eta_t)
\, d\eta_t(x)d\eta_t(y).
\end{align*} 
Here, by \eqref{eq:eta_t-pert-2}, we have that
\begin{align*} 
d\eta_t(x)d\eta_t(y) = \left\{
\begin{aligned}
2 dt,& \quad x=y, \\
-dt,& \quad x=y\pm 1, \\
0,& \quad \text{otherwise}.
\end{aligned}  \right.
\end{align*} 
Thus, we get $L=L_0+L_1$ again by using \eqref{eq:eta_t-pert-2}
for $d\eta_t(x)$ in the first term of $dF(\eta_t)$.
\end{proof}

\begin{rem}  \label{rem:1.3}
For $F=F(\eta)$ on $\mathcal{X}_N = \R^N$, one can calculate
\begin{equation}  \label{eq:d-phi}
\partial_{\phi(x)}F(\eta) = \sum_y \partial_{\eta(y)}F(\eta) \cdot \frac{\partial\eta(y)}{\partial\phi(x)}
= \partial_{\eta(x-1)}F(\eta) -  \partial_{\eta(x)}F(\eta).
\end{equation}
Thus, the operator $L$ for $\phi$ given in Section \ref{sec:2.3.1} can be rewritten
as in Lemma \ref{lem:1.10} for $\eta$.
The identity \eqref{eq:d-phi} can be rephrased as
$$
D_\phi = \nabla^* D_\eta,
$$
where $D_\phi = (\partial_{\phi(x)})_{x\in \T_N}$,
$D_\eta = (\partial_{\eta(x)})_{x\in \T_N}$, $\nabla g(x)  := g(x+1)-g(x)$ and 
$\nabla^* g(x) := g(x-1)-g(x)$ for $g=(g(x))_{x\in \T_N}$.
\end{rem}

Let $d\mu = \frac1Z e^{-H(\eta)} d\eta$ be a probability measure on
$\R^N$, where 
$$
H(\eta) = \sum_{x\in \T_N} V(\eta(x)) \quad \text{ and } \quad
d\eta = \prod_{x\in \T_N} d\eta(x),
$$
for $\eta= (\eta(x))_{x\in \T_N}$.  We assume 
\begin{align} \label{eq:2.V}
V(\zeta)\ge c_1 \zeta^2-c_2, \quad c_1>0, \, c_2\in \R,
\end{align}
then $\mu$ is normalizable, i.e.\ $Z<\infty$.
One can actually define $\mu_u$ parametrized by its mean as follows.  First,
for $\la\in \R$, define a probability measure $\hat\nu_\la$ on $\R$ by
\begin{align*}
&\hat{\nu}_\la(d\zeta) = \frac1{\hat Z_\la} e^{-V(\zeta)+\la\zeta}d\zeta, 
\quad \zeta\in \R.
\end{align*}
By \eqref{eq:2.V},
\begin{align} \label{eq:2.Z}
\hat Z_\la :=\int_\R e^{-V(\zeta)+\la\zeta}d\zeta <\infty
\end{align}
for all $\la\in \R$.   Then, we set
\begin{align}  \notag
& u\equiv u(\la) := E^{\hat\nu_\la}[\zeta],
\intertext{denoting the mean of $\zeta$ under $\hat \nu_\la$, and we have}
& u'(\la) = E^{\hat\nu_\la}[(\zeta- E^{\hat\nu_\la}[\zeta])^2] >0.
\label{eq:u-deri}
\end{align}
Thus, $u=u(\la)$ has an inverse function $\la=\la (u)$, actually given by
$\la(u)=\si'(u) = E^{\hat\nu_\la}[V'(\eta)]$, where $\si(u)$ is the surface
tension defined as in (5.2) of \cite{F05}.
We set $\nu_u := \hat\nu_{\la(u)}$
for $u\in \R$.  We denote the product measure of $\nu_u$ on
$\mathcal{X}_N=\R^N$ by 
\begin{align} \label{eq:GC}
\mu_u := \nu_u^{\otimes\T_N},
\end{align}
called the grand canonical measure.  The measure $\mu$ corresponds to the case
of $\la=0$.  Then, $\{\mu_u\}_{u\in\R}$ are reversible for $\eta_t$,
determined by \eqref{eq:eta_t-pert-2},
when $\ga=0$, and stationary for general $\ga\in \R$, as in the following lemma.
See also \cite{F05}, Section 5.5 for more details.

\begin{lem}  \label{lem:Rev+Stat-eta}
Let $u\in \R$. 
For functions $F=F(\eta)$ and $G=G(\eta) \in C^2(\R^N)$, if all integrals
in the following converge, we have
\begin{align*} 
 \int_{\R^N} GL_0F d\mu_u & = \int_{\R^N} FL_0G d\mu_u  \\
& = -\frac12 \int_{\R^N} \sum_{x\in \T_N} \big( \partial_{\eta(x)} -\partial_{\eta(x-1)} \big) F \cdot
\big( \partial_{\eta(x)} -\partial_{\eta(x-1)} \big) G\, d\mu_u, \\
\int_{\R^N} G L_1 F d\mu_u &= -\int_{\R^N} F L_1 G d\mu_u.
\end{align*}
For the carr\'e du champ of the $\eta$-process, we have
\begin{align} \label{eq:2.13-Q}
\Ga(F)  = \sum_{x\in \T_N} \big( \partial_{\eta(x)} F -\partial_{\eta(x-1)} F \big)^2 
= \sum_{x\in \T_N} \big( \nabla^*D_\eta F(x) \big)^2 
=: |\nabla_{\T_N}^* D_\eta F|^2,
\end{align}
recall Remark \ref{rem:1.3} for $\nabla^*$ and $D_\eta$. 
\end{lem}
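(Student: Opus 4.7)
The plan is to prove all three identities by direct integration by parts against the product measure $\mu_u = \prod_{x\in\T_N}\nu_u$, whose density with respect to $d\eta$ is proportional to $e^{-\sum_x V(\eta(x)) + \lambda(u)\sum_x \eta(x)}$. The single analytic input I need is the one-site identity
\[
\int \partial_{\eta(x)} f\, d\mu_u = \int f\bigl(V'(\eta(x)) - \lambda(u)\bigr)\, d\mu_u,
\]
valid whenever the integrand decays sufficiently at infinity, which will follow from the hypothesis and the growth condition \eqref{eq:2.V}. Everything after that is algebraic rearrangement on $\T_N$.

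For the symmetric identity, I would use the second expression for $L_0$ in Lemma \ref{lem:1.10}, abbreviating $D_x := \partial_{\eta(x)} - \partial_{\eta(x-1)}$. Applying the one-site identity twice to $\int G\, D_x(D_xF)\,d\mu_u$ produces $-\int D_xG\cdot D_xF\,d\mu_u$ together with an extra term $\int G\,(D_xF)(V'(\eta(x))-V'(\eta(x-1)))\,d\mu_u$; the factor $V'(\eta(x))-V'(\eta(x-1))$ arises because $D_x$ applied to $-\sum_y V(\eta(y)) + \lambda(u)\sum_y \eta(y)$ equals $-(V'(\eta(x))-V'(\eta(x-1)))$, with the $\lambda(u)$-pieces canceling. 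This extra term is exactly the first-order part of $L_0$ acting on $F$ against $G$; summing over $x$ and dividing by two gives the stated Dirichlet form, whose symmetry in $F$ and $G$ is then immediate.

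For the asymmetric identity, integrating $\int G\,L_1 F\, d\mu_u$ by parts in each $\partial_{\eta(x)}$ is straightforward because $V'(\eta(x\pm 1))$ does not depend on $\eta(x)$. The result is $-\int F\,L_1 G\, d\mu_u$ plus the error
\[
\gamma\int FG\,\sum_{x\in\T_N}\bigl(V'(\eta(x+1))-V'(\eta(x-1))\bigr)\bigl(V'(\eta(x))-\lambda(u)\bigr)\,d\mu_u.
\]
The main obstacle, such as it is, is to show that the inner sum vanishes \emph{pointwise}. Expanding, the cross term $\sum_x V'(\eta(x+1))V'(\eta(x))-\sum_x V'(\eta(x-1))V'(\eta(x))$ becomes zero after shifting the summation index on the torus $\T_N$, and the linear piece $\lambda(u)\sum_x(V'(\eta(x+1))-V'(\eta(x-1)))$ telescopes to zero by periodicity; hence the whole sum is identically zero and antisymmetry follows.

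Finally, for the carré du champ, I observe that the first-order part of $L_0$ and all of $L_1$ are first-order derivations, hence contribute zero to $\Gamma(F) = LF^2 - 2F\,LF$. Only the principal second-order piece $\frac12\sum_x D_x^2$ matters, and the identity $D_x^2 F^2 = 2F\,D_x^2 F + 2(D_xF)^2$ immediately yields $\Gamma(F) = \sum_x (D_xF)^2 = |\nabla_{\T_N}^* D_\eta F|^2$ as claimed. The rest is standard manipulation of the product measure and summation by parts on $\T_N$; no new ingredients beyond periodicity and the one-site integration by parts are required.
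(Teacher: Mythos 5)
Your proof is correct, and it carries out precisely the "direct calculation for $\mu_u$" that the paper invokes but omits: one-site integration by parts against the product measure (with the $\lambda(u)$-terms cancelling in $D_xH$ for $L_0$ and the periodic sum $\sum_x(V'(\eta(x+1))-V'(\eta(x-1)))(V'(\eta(x))-\lambda(u))$ vanishing pointwise for $L_1$), plus the standard derivation-cancellation argument for $\Gamma(F)$. No gaps; this is the same approach the paper intends.
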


Note that $\nabla^*$ in the expression of $\Ga(F)$ plays a role of 
Riemannian metric.  This lemma is shown by a direct calculation for
$\mu_u$.  It also follows from Lemma 
\ref{lem:Rev+Stat-phi} under the interpretation \eqref{eq:d-phi} in
Remark \ref{rem:1.3}.  The result for $\mu_u$ 
follows also from that for $\mu$ ($=\hat\nu_0^{\otimes \T_N}$,
i.e.\ the case of $\la=0$) and the 
conservation law \eqref{eq:2.cons}; see the next Section \ref{sec:1.16.1}.  
The proof of this lemma is omitted.

\subsubsection{Conservation law and canonical measures}  \label{sec:1.16.1}

The $\eta$-process, as defined by \eqref{eq:eta_t-pert-2},
has a conservation law \eqref{eq:2.cons} so we need to consider 
the $\eta$-process and also the operators $L_0, L_1$, as given in 
Lemma \ref{lem:1.10}, on the state space
\begin{align*}
\mathcal{X}_{N,m} := \{\eta\in \R^N; \eta^{(N)}=m\}
\end{align*}
for each $m\in \R$, where
\begin{align*}
\eta^{(N)} := \frac1N \sum_{x\in\T_N} \eta(x).
\end{align*}

Consider the conditional probabilities
\begin{align} \label{eq:2.13}
\mu_{N,m} := \mu_u(\cdot|\eta^{(N)}=m) \equiv \nu_u^{\otimes \T_N}(\cdot|\eta^{(N)}=m),
\end{align}
which are called canonical measures.  Note that $\mu_{N,m}$ does not depend 
on the choice of $u\in \R$ or $\la\in \R$, since
\begin{align*}
\hat\nu_\la^{\otimes \T_N} = \frac1{\hat Z_\la^N} e^{-\sum_{x\in\T_N} V(\eta(x))
+ \la N \eta^{(N)}} \prod_{x\in \T_N} d\eta(x)
\end{align*}
and $\eta^{(N)}=m$ is a constant under the conditioning.

The generator $L_0$ is symmetric with respect to $\mu_{N,m}$ for every $m\in \R$;
cf.\ the proof of Lemma \ref{lem:1.14} below.

\section{Boltzmann-Gibbs principle in the $L^p$ sense} \label{sec:BG}

\subsection{Main result}

Let $\eta_t = (\eta_t(x))_{x\in \T_N}$ be the solution of \eqref{eq:eta_t-pert-2}
and consider its diffusive scaling in time:
\begin{align} \label{eq:3.Diff}
\eta_t^N := \eta_{N^2t}.
\end{align}

We state our main result as Theorem \ref{BG-2-B}.  When $p=2$,
Theorem \ref{BG-2-B}  is equivalent, including the order of the constants in 
$T, N, \ell$, to the known results obtained in \cite{BFS}, Theorem 4.1,
though the models are different.  We extend these results to the
$L^p$ setting.  Our estimates are uniform in $\ga\in \R$.

For a function $f=f(\eta)$ on $\mathcal{X}_N=\R^N$, we denote
\begin{align}  \label{eq:EA}
\tilde{f}(u) \equiv \lan f\ran(u) :=E^{\mu_u}[f],
\end{align}
the expectation taken under the grand canonical measure $\mu_u$,
which is defined in
\eqref{eq:GC}, for $u\in \R$.  Notation $E_{\mu_u}[\,\cdot\,]$ indicates that the
initial distribution of the process $\eta^N_t$ is $\mu_u$.  

The following theorem provides the precise statement for \eqref{eq:1.BG}
of second- and first-orders.  We take $u_0\in \R$ and fix it.
For $\ell\in \N, \ell<N/2$, set $\La_\ell:=[-\ell,\ell-1]\cap\Z$.
The sample average of $\eta=(\eta(y))_{y\in \La_\ell+x}$ is defined by
\begin{align}\label{eq:eta-ell-B}
\eta^{(\ell)}(x) := \frac1{2\ell} \sum_{y=-\ell}^{\ell-1} \eta(x+y). 
\end{align}
This corresponds to \eqref{eq:1.1} in the present setting, in particular, when $d=1$.

\begin{thm}   \label{BG-2-B}
Assume
\begin{align} \label{eq:Assump}
V\in C^2(\R) \quad \text{and} \quad 0<c_-\le V''\le c_+<\infty,
\end{align}
for some positive constants $c_+$ and $c_-$.
Let $\ell_0 \ge 1$, $p\ge 4$, and let $f$ be a local $L^{p'}(\mu_{u_0})$ function 
on $\mathcal{X}_N=\R^N$ for some $p'>p$
depending only on $\eta_{\La_{\ell_0}}=(\eta(x))_{x\in \La_{\ell_0}}$
such that $ \tilde{f}(u_0)  =0$ and 
$\tilde{f}' (u_0)=0$.  Then, there exists a constant $C=C(u_0,\ell_0)>0$, 
which is uniform in $N, \ell, \ga\in \R$, such that, for $T>0$, 
$\ell\geq \ell_0$ and a function $h$ on $[0,T]\times \T_N$, we have
\begin{align}  \label{eq:3.1-A}
& E_{\mu_{u_0}}\bigg[ \; \sup_{0\leq t\leq T}\Big| \int_0^t ds \; \sum_{x\in\T_N} 
   h(s,x)  r_s^{N,\ell;2}(x)  \Big|^p\; \bigg]\\
& \hskip 3mm \notag
    \le  C \Big( T^{(p-2)/2} N^{-p/2}\ell^{p/2} +T^{p-1} N^p \ell^{-3p/2}\Big)
        \|f\|^p_{L^{p'}(\mu_{u_0})} \int_0^T \frac{1}{N}\sum_{x\in{\T_N}}|h(t,x)|^p dt,
\end{align}
where 
\begin{align}  \label{eq:3.2-A}
r_s^{N,\ell;2}(x)=f(\tau_x \eta^{N}_s)  - \frac{1}{2} \tilde{f}''(u_0)
\Big\{\Big(\big(\eta^{N}_s\big)^{(\ell)}(x)-u_0\Big)^2    
  -\frac{{\rm var}(u_0)}{2\ell+1} \Big\}.
\end{align}
and 
${\rm var}(u)=u'(\la(u)) = E^{\mu_u}[(\eta(0)-u)^2]$ is the variance of $\mu_u$;
see \eqref{eq:u-deri}.
Recall \eqref{eq:eta-ell-B} for $\eta^{(\ell)}(x)$ and we take $\eta_s^N$ 
for $\eta$.
On the other hand, when only $\tilde{f}(u_0)=0$ is known, we have
\begin{align}  \label{eq:3.3-A}
& E_{\mu_{u_0}}\bigg[\; \sup_{0\leq t\leq T}\Big| \int_0^t  ds \; \sum_{x\in{\T_N}} 
  \; h(s,x) r_s^{N,\ell;1}(x)\Big|^p\; \bigg]\\
&\hskip 3mm \notag
  \le  C \Big( T^{(p-2)/2} N^{-p/2}\ell^{p} +T^{p-1} N^p \ell^{-p}\Big)
        \|f\|^p_{L^{p'}(\mu_{u_0})} \int_0^T \frac{1}{N}\sum_{x\in{\T_N}}|h(t,x)|^p dt,
\end{align}
where
\begin{align}  \label{eq:3.4-A}
r_s^{N,\ell;1}(x) = f(\tau_x \eta^{N}_s) 
  - \tilde{f}'(u_0)\Big\{\big(\eta^{N}_s\big)^{(\ell)}(x) - u_0\Big\}.
\end{align}
\end{thm}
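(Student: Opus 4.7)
The plan is to reduce the $L^p$ estimate to a classical two--scale Boltzmann--Gibbs scheme: write $r^{N,\ell;2}_s(x) = g^{(1)}_x(\eta_s^N) + g^{(2)}_x(\eta_s^N)$, where
\begin{align*}
g^{(1)}_x(\eta) &:= f(\tau_x\eta) - \Psi_\ell(\eta^{(\ell)}(x)), \\
g^{(2)}_x(\eta) &:= \Psi_\ell(\eta^{(\ell)}(x)) - \tfrac12 \tilde f''(u_0)\Big\{(\eta^{(\ell)}(x)-u_0)^2 - \tfrac{\mathrm{var}(u_0)}{2\ell+1}\Big\},
\end{align*}
with $\Psi_\ell(m):=E^{\mu_{\La_\ell,m}}[f(\tau_0\cdot)]$ denoting the canonical average on $\La_\ell$ conditioned on block average $m$. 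The first piece $g^{(1)}_x$ has zero conditional mean given $\eta^{(\ell)}(x)$ and will be controlled via a local Poisson equation, It\^o's formula, the BDG inequality, and the Littlewood--Paley--Stein (LPS) inequality of Appendix~\ref{sec:B}; the second piece $g^{(2)}_x$ is a function only of $\eta^{(\ell)}(x)$ and will be bounded directly via equivalence of ensembles.

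For the Taylor remainder, the equivalence of ensembles yields $\Psi_\ell(m) = \tilde f(m) - \tfrac12 \tilde f''(u_0)\tfrac{\mathrm{var}(u_0)}{2\ell+1} + O(\ell^{-2})$ with constants depending on $\|f\|_{L^{p'}}$. Combined with the Taylor expansion of $\tilde f$ at $u_0$ and the assumptions $\tilde f(u_0)=\tilde f'(u_0)=0$, this gives $|g^{(2)}_x(\eta)|\le C\|f\|_{L^{p'}}(|\eta^{(\ell)}(x)-u_0|^3+\ell^{-2})$. Since $\eta^{(\ell)}(x)-u_0$ has variance $O(\ell^{-1})$ under $\mu_{u_0}$, with sub-Gaussian tails supplied by the log-Sobolev inequality guaranteed by the convexity \eqref{eq:Assump}, each summand has $L^p(\mu_{u_0})$-norm $O(\ell^{-3/2})\|f\|_{L^{p'}}$. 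Minkowski over $s\in[0,t]$ and $x\in\T_N$, followed by H\"older applied to the weights $h$, then produces the second error term $CT^{p-1} N^p\ell^{-3p/2}\|f\|^p_{L^{p'}}\int_0^T\tfrac1N\sum_x|h(t,x)|^p\,dt$ in \eqref{eq:3.1-A}.

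For the fluctuation piece $g^{(1)}_x$, we solve the local Poisson equation $L_0^{\La_\ell+x} V^x = g^{(1)}_x$ on each canonical fiber, with $V^x$ depending only on $\eta_{\La_\ell+x}$ (solvability is granted by $E^{\mu_{\La_\ell+x,m}}[g^{(1)}_x]=0$). The local spectral gap of order $\ell^{-2}$ combined with the LPS inequality of Appendix~\ref{sec:B} (uniform in $N$ under \eqref{eq:Assump} via Bakry's $\Gamma_2$-argument, see Section~\ref{sec:4.3}) gives
\[
\|V^x\|_{L^p(\mu_{u_0})} \le C\ell^2\|f\|_{L^{p'}}, \qquad \|\Gamma(V^x)^{1/2}\|_{L^p(\mu_{u_0})} \le C\ell\|f\|_{L^{p'}}.
\]
Setting $W(s,\eta):=\sum_x h(s,x) V^x(\eta)$ (after smoothing $h$ in $s$ by approximation and a limit passage), It\^o's formula applied to $W(s,\eta_s^N)$, combined with the fact that $\eta^N$ has generator $N^2 L$ and that $L_0 W(s,\cdot)=\sum_x h(s,x) g^{(1)}_x$, produces
\[
\int_0^t\sum_x h(s,x) g^{(1)}_x(\eta_s^N)\,ds = N^{-2}\{W(t,\eta_t^N)-W(0,\eta_0^N)-M^W_t\} - \int_0^t L_1 W(s,\eta_s^N)\,ds - N^{-2}\int_0^t \partial_s W(s,\eta_s^N)\,ds,
\]
with $\langle M^W\rangle_T = N^2\int_0^T\Gamma(W(s,\cdot))(\eta_s^N)\,ds$. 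The BDG inequality in $L^p$, Jensen in time, stationarity of $\eta^N_s$ under $\mu_{u_0}$, and the LPS bound $\|\Gamma(W(s,\cdot))^{1/2}\|_{L^p(\mu_{u_0})} \le C\|(-L_0)^{1/2}W(s,\cdot)\|_{L^p(\mu_{u_0})}$, together with careful accounting for the overlapping supports $\{|x-y|\le 2\ell\}$ of the $V^x$, yield the first error term $CT^{(p-2)/2}N^{-p/2}\ell^{p/2}\|f\|^p_{L^{p'}}\int_0^T\tfrac1N\sum_x|h(t,x)|^p\,dt$. The boundary and $L_1$-drift contributions are of the same or smaller order, and the uniformity in $\ga$ is automatic since $L_1$ enters only linearly with coefficient $\ga$ while the $N^{-2}$ prefactor absorbs the $N^2$ from the diffusive time change.

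The principal difficulty is the uniform-in-$N$ $L^p$-bound $\|\Gamma(V)^{1/2}\|_{L^p(\mu_{u_0})} \le C\|(-L_0)^{1/2}V\|_{L^p(\mu_{u_0})}$, which is precisely the LPS inequality of Appendix~\ref{sec:B}, requiring the convexity \eqref{eq:Assump}; without it, promoting the $L^2$-BG estimate of \cite{BFS} to $p>2$ would not be feasible by this route. A secondary technical point is the time-dependence of $h$ in the It\^o step, handled by mollification of $h$ in $s$ and a limit argument exploiting the $L^p$-stability of both sides of \eqref{eq:3.1-A}. Finally, the first-order estimate \eqref{eq:3.3-A} follows by exactly the same scheme, the only change being that the Taylor remainder is now of order $(\eta^{(\ell)}(x)-u_0)^2+O(\ell^{-1})$, whose typical $L^p$-size is $O(\ell^{-1})$ instead of $O(\ell^{-3/2})$, thus yielding $\ell^{-p}$ in place of $\ell^{-3p/2}$ in the corresponding error term.
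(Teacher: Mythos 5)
Your decomposition into a fluctuation piece $g^{(1)}_x=f(\t_x\eta)-E^{\mu_{\ell,\eta^{(\ell)}(x)}}[f]$ and an ensemble-equivalence piece $g^{(2)}_x$ is sound, and your treatment of $g^{(2)}_x$ (equivalence of ensembles in $L^p$, then Minkowski/H\"older in $s,x$) correctly produces the $T^{p-1}N^p\ell^{-3p/2}$ term. The decisive gap is in the fluctuation piece: a single-scale argument at scale $\ell$ cannot give $\ell^{p/2}$. With your own bounds $\|\Ga(V^x)^{1/2}\|_{L^p}\le C\ell\|f\|_{L^{p'}}$ and the overlap count $|x-y|\le 2\ell$, one gets
\begin{align*}
\Ga(W)\le C\ell\sum_{x}h(s,x)^2\,\Ga(V^x),\qquad
E\big[\Ga(W)^{p/2}\big]^{1/p}\le C\ell^{1/2}\cdot\ell\,\|f\|_{L^{p'}}\Big(\sum_x h(s,x)^2\Big)^{1/2},
\end{align*}
so the BDG/Jensen computation yields $T^{(p-2)/2}N^{-p/2}\ell^{3p/2}$ --- exactly the one-block estimate (Lemma \ref{lem:1.22}), not the claimed $\ell^{p/2}$. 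The missing idea is the renormalization/two-blocks step (Lemmas \ref{globalrenormalization} and \ref{globaltwo-blocks}): one applies the one-block bound only at the fixed scale $\ell_0$, then telescopes $E[f|\eta^{(2^i\ell_0)}]-E[f|\eta^{(2^{i+1}\ell_0)}]$ over dyadic scales, where at each scale the relevant local function has $L^{p''}$-norm $O((2^i\ell_0)^{-1})$ by the equivalence of ensembles (\eqref{eq:1.37-B}); this converts $(2^i\ell_0)^{3/2}$ into $(2^i\ell_0)^{1/2}$ per scale and the geometric sum gives $\ell^{1/2}$. Without this iteration your bound is $\ell^{3p/2}+\ell^{-3p/2}$, strictly weaker than the theorem.

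A second genuine problem is your use of the one-sided It\^o/Dynkin formula. In your displayed identity the antisymmetric drift $\int_0^t L_1W(s,\eta_s^N)\,ds$ appears \emph{without} the $N^{-2}$ prefactor (the diffusive time change multiplies the whole generator by $N^2$, which exactly cancels the $N^{-2}$), and it carries the factor $\ga$, which the theorem allows to be arbitrarily large; likewise the boundary terms $N^{-2}\{W(t)-W(0)\}$ are of size $N^{-2}\ell^2\sum_x|h|$ and are not dominated by the martingale term for large $\ell$. The claim that these are ``of the same or smaller order'' and that $\ga$-uniformity is ``automatic'' is therefore false. The paper's route is the forward--backward martingale decomposition (Lemma \ref{lem:1.6}): writing $\int_0^t 2L_0F\,ds=-M_t^++M_{T-t}^--M_T^-$ cancels both the boundary terms and the $L_1$ contribution identically, which is what makes the final constant uniform in $\ga\in\R$. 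There is also a smaller unaddressed point: $L_0V^x\neq L_{0,\ell}^{\La_\ell+x}V^x$ because the bonds crossing $\partial(\La_\ell+x)$ contribute extra second-derivative terms; the paper avoids this by never solving a global Poisson equation, instead bounding $\|(-L_0)^{-1/2}(\,\cdot\,)\|_{L^p}$ through the variational formula of Lemma \ref{lem:1.11} and localizing only inside the resulting expectation (Proposition \ref{prop:1.21}).
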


These estimates are called the second-order and the first-order 
Boltzmann-Gibbs principles, respectively.  Note that the right-hand side of
\eqref{eq:3.3-A} is $\ell^{p/2}$ times the right-hand side of \eqref{eq:3.1-A}.
This means that a higher-order Taylor expansion provides a better bound.
As we noted, to discuss the KPZ scaling limit, we consider in the weakly asymmetric
regime choosing $\ga N^{-1/2}$ instead of $\ga$ and this is covered by 
Theorem  \ref{BG-2-B}.  However, it also covers the strongly asymmetric regime.

We need the conditions \eqref{eq:4.Vconv}, (SG) and \eqref{eq:4.VSconv} for $V$,
as stated below.  If the condition \eqref{eq:Assump} holds, then these three conditions
and also \eqref{eq:2.V} are satisfied.  Moreover, under the condition
\eqref{eq:Assump}, $V'$ is globally Lipschitz continuous; therefore,
the SDEs \eqref{eq:phi_t} for $\phi_t$ and \eqref{eq:eta_t-pert-2} for $\eta_t$ 
and their localized versions have unique global-in-time strong solutions.

\begin{rem}
When applying Theorem  \ref{BG-2-B} to the KPZ scaling limit, 
the first-order Boltzmann-Gibbs principle \eqref{eq:3.3-A} is used for
the term with another scaling factor $N^{-1/2}$ inside the integral
on the left-hand side (cf.\ \cite{BFS}), resulting in an additional factor of
$N^{-p/2}$ on the right-hand side.
In \eqref{eq:3.1-A}, if we take $\ell=N^{3/4}$, the front factor on the 
right-hand side becomes
$$
T^{(p-2)/2}N^{-p/8} + T^{p-1}N^{-p/8}.
$$
On the other hand, the estimate \eqref{eq:3.3-A} multiplied by $N^{-p/2}$,
as explained above, gives the front factor on the right-hand side
$$
T^{(p-2)/2}N^{-p}\ell^p + T^{p-1}N^{p/2}\ell^{-p}
= T^{(p-2)/2}N^{-p/4} + T^{p-1}N^{-p/4}
$$
taking $\ell=N^{3/4}$ again.  The power $p$ is natural, since we are considering the 
$p$th moment on the left-hand side.
\end{rem}

\subsection{Outline of the proof of Theorem \ref{BG-2-B}}
\label{sec:3.1}

Before giving the details, here we outline the proof of Theorem \ref{BG-2-B};
in particular, we explain how the Littlewood-Paley-Stein inequality will be used.
We consider in $\eta$-variables under the canonical measure $\mu_{N,m}$ 
on the canonical 
space $\mathcal{X}_{N,m}$ with $m \equiv m_N= Nu_0$, instead of the 
grand canonical measure $\mu_{u_0}$ on the space
$\mathcal{X}_N$, so that the process $\eta_t$ is ergodic.

First, we show It\^o-Tanaka trick (Corollary \ref{cor:1.7}) in the $L^p$ sense
under equilibrium:
\begin{align}  \label{eq:1.IT}
E_{\mu_{N,m}}&\Big[ \sup_{t\in [0,T]} \Big| \int_0^t V(s,\eta_s^N) ds\Big|^p \Big]\\
&\le C_p N^{-p} T^{(p-2)/2} \int_0^TE^{\mu_{N,m}}\Big[ \{\Ga((-L_0)^{-1}V(t,\cdot))\}^{p/2} \Big] dt,
\notag
\end{align}
for $V(s,\eta)$ on $[0,T]\times\mathcal{X}_{N,m}$ satisfying 
$E^{\mu_{N,m}}[V(s,\cdot)]=0$ for every $s\in [0,T]$, where the
constant $C_p>0$ is uniform in $N$, $m\in \R$ and $\ga\in \R$.
Note that $(-L_0)^{-1}V(t,\cdot)$ exists if $V$ satisfies 
$E^{\mu_{N,m}}[V(t,\cdot)]=0$ at least under the condition (SG) for $-L_0$ 
on $\mathcal{X}_{N,m}$; see Section \ref{sec:1.20.4}.

Second, we note the Littlewood-Paley-Stein inequality at the $\eta$-level;
see Proposition \ref{prop:LPS} below for detail.  Let $(\mathcal{X}, \mu,\Theta)$
be $(\mathcal{X}_{N,m},\mu_{N,m},\T_N)$ or its localized version
$(\mathcal{X}_{\ell,m},\mu_{\ell,m},\La_\ell)$. Then,  
for every $1<p<\infty$ and a function $f$ on $\mathcal{X}$,
\begin{align} \label{eq:LPS-2}
c \|\nabla_\Theta^* D_\eta f\|_{L^p(\mu)} \le \|(-L_0)^{1/2}f\|_{L^p(\mu)} 
\le C \|\nabla_\Theta^* D_\eta f\|_{L^p(\mu)},
\end{align}
holds with constants $c=c_{p}, C=C_{p}>0$, which are uniform in $N, m$ 
and in $\ell, m$ under localization.  Here, $\nabla_\Theta^*g$ denotes
the vector $(\nabla^* g(x))_{x: \lan x-1,x\ran\subset \Theta}$, $L_0$ is 
defined at the $\eta$-level and it is taken as $L_{0,\ell}$ for the localized version
on $\mathcal{X}_{\ell,m}$.  When $p=2$, \eqref{eq:LPS-2} holds as equality
so that $c=C=1$.  A weaker estimate is given in Theorem \ref{thm:LPS} 
below, but it is not sufficient for our purposes.

By the formula for the carr\'e du champ $\Ga$ given in Lemma 
\ref{lem:Rev+Stat-eta} and then by the lower bound in the 
Littlewood-Paley-Stein inequality \eqref{eq:LPS-2} on $\T_N$, we have
\begin{align}  \label{eq:1.28}
E^{\mu_{N,m}}\Big[ \{\Ga((-L_0)^{-1}V)\}^{p/2} \Big]
& = E^{\mu_{N,m}}\Big[ \big| \nabla_{\T_N}^* D_\eta(-L_0)^{-1}V\big|^{p} \Big] \\
& \le c^{-p}  \big\| (-L_0)^{1/2}(-L_0)^{-1}V \big\|_{L^p(\mu_{N,m})}^p   \notag  \\
& = c^{-p}  \big\| (-L_0)^{-1/2}V \big\|_{L^p(\mu_{N,m})}^p.  \notag
\end{align}

\begin{rem}
A weaker estimate with different $p$ and $q$ on both sides of \eqref{eq:1.28},
as in \cite{Shi02}, is sufficient for the following arguments.
\end{rem}

To estimate the right-hand side of \eqref{eq:1.28}, we apply the
$L^p$ variational formula (Lemma \ref{lem:1.11}).  This avoids 
working directly with the nonlocal operator $(-L_0)^{-1/2}$.  Then, 
by localizing our argument in the region of size $\ell \, (<\!\!< N)$,
we apply the weak $L^p$ Poincar\'e inequality (Lemma \ref{lem:1.15})
and the localized version of the Littlewood-Paley-Stein inequality
\eqref{eq:LPS-2} on $\La_\ell$; see the proof of Proposition \ref{prop:1.21}.
See Remark \ref{rem:1.7} below for the reason that 
the localization is necessary.

Afterward, based on the bounds obtained in this way, we essentially follow
the line of proof in \cite{GoJ} and \cite{BFS}, extending it to the
$L^p$ setting.  Specifically, by properly
taking the function $V$ in \eqref{eq:1.IT}, we show the one-block estimate
(Lemma \ref{lem:1.22}, using \eqref{eq:LPS-2} on $\T_N$), 
the iteration lemma (Lemma \ref{globalrenormalization}), 
the two-blocks estimate (Lemma \ref{globaltwo-blocks})
and the further estimate (Lemma \ref{EE_1block}) by establishing
the equivalence of ensemble in the $L^p$ sense (Proposition \ref{thm:EE}).  
The proof of Theorem \ref{BG-2-B} is concluded by combining all these estimates.

\begin{rem}  \label{rem:1.7}
Without the localization argument (i.e.\ taking $\ell=N$ in the above), we have
$$
\big\| (-L_0)^{-1/2}V \big\|_{L^p(\mu_{N,m})} \le C_{p,q} N \|  V \|_{L^q(\mu_{N,m})}, 
\quad 2<p<q<\infty.
$$
Therefore, by \eqref{eq:1.IT} and  \eqref{eq:1.28}, we obtain
\begin{align}  \label{eq:1.30}
E_{\mu_{N,m}}\Big[ \sup_{t\in [0,T]} \Big| \int_0^t V(s,\eta_s^N) ds\Big|^p \Big]
\le C_{p,q}  T^{(p-2)/2} \int_0^T 
  \big\| V(t,\cdot) \big\|_{L^q(\mu_{N,m})}^p dt,
\end{align}
for $V$ satisfying $E^{\mu_{N,m}}[V(t,\cdot)]=0$ for every $t\in [0,T]$
and $2<p<q<\infty$.  However, when we apply this to the KPZ scaling
limit, $V= O(N)$ (the sum of $N$ terms) and this estimate turns out to be too rough.
\end{rem}

\section{Several tools for the proof of Theorem \ref{BG-2-B}}
\label{sec:4}

This section introduces the tools necessary for proving Theorem 
\ref{BG-2-B}.  Sections \ref{sec:4.2} through \ref{sec:VF} deal with the symmetric part
$L_0$ of the generator only.  In particular, all estimates given in this section are 
uniform in $\ga\in\R$.

\subsection{It\^o-Tanaka trick in the $L^p$ sense in equilibrium} 

We first state It\^o-Tanaka trick, also known as Kipnis-Varadhan estimate, in the $L^p$
setting.  Let $L=L_0+L_1$ be the generator  of the process $\eta_t$ 
(see Lemma \ref{lem:1.10})
with symmetric part $L_0=(L+L^*)/2$ and asymmetric part $L_1=(L-L^*)/2$,
where $L^*$ denotes the adjoint of $L$ with respect to $\mu_u$ defined in
\eqref{eq:GC}; recall Lemma \ref{lem:Rev+Stat-eta} which shows $L_0^*=L_0$ and
$L_1^*=-L_1$.  Let 
\begin{align}  \label{eq:4.CC}
\Ga(F,G)(\eta)= L_0(FG)(\eta)-GL_0F(\eta)-FL_0G(\eta)
\end{align}
be the corresponding
carr\'e du champ; recall \eqref{eq:2.CC} for $\Ga(F)= \Ga(F,F)$.
Note that, since the asymmetric part $L_1$ is 
a first-order differential operator, $L_1(FG)-GL_1F-FL_1G=0$.

Recalling Section \ref{sec:1.16.1} for the notation $\mathcal{X}_{N,m}$, $\mu_{N,m}$,
and  $E_{\mu_{N,m}}$, $E^{\mu_{N,m}}$ explained above Theorem \ref{BG-2-B}
(replacing $\mu_u$ by $\mu_{N,m}$),
we have the following; cf.\ \cite{GJ} Lemma 2 for $p\ge 1$, \cite{KLO} Lemma 2.4
for $p=2$.  This type of estimate is also known in non-equilibrium setting;
see, for example, \cite{FUY} Lemma 3.3.

\begin{lem}  \label{lem:1.6}
Let $p\ge 2$.
For a function $F=F(t,\eta)$ on $[0,T]\times \mathcal{X}_{N,m}$, we have
\begin{align}  \label{eq:4.1-U}
E_{\mu_{N,m}}\Big[ \sup_{t\in [0,T]} \Big| \int_0^t L_0F(s,\eta_s) ds\Big|^p \Big]
\le C_p T^{(p-2)/2} \int_0^T E^{\mu_{N,m}}\Big[ \{\Ga(F(t,\cdot))\}^{p/2} \Big]dt,
\end{align}
where $C_p>0$ is uniform in $N$, $m\in \R$ and $\ga\in\R$.
In particular, if we take $F$ such that $-L_0F(s,\eta)=V(s,\eta)$ from a given $V$ 
on $[0,T]\times \mathcal{X}_{N,m}$ satisfying $E^{\mu_{N,m}}[V(s,\cdot)]=0$
for each $s\in [0,T]$, then we have
\begin{align}   \label{eq:4.2-U}
E_{\mu_{N,m}} & \Big[ \sup_{t\in [0,T]} \Big| \int_0^t V(s,\eta_s) ds\Big|^p \Big] \\
& \le C_p T^{(p-2)/2} \int_0^TE^{\mu_{N,m}}
\Big[ \{\Ga((-L_0)^{-1}V(t,\cdot))\}^{p/2} \Big] dt.
\notag
\end{align}
See Section \ref{sec:3.1} below \eqref{eq:1.IT} for the existence of $(-L_0)^{-1}$
noting the condition (SG) in Section \ref{sec:1.20.4}.
\end{lem}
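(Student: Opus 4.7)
The plan is to establish the lemma via a forward--backward martingale decomposition of $\int_0^t L_0 F(s,\eta_s)\,ds$, followed by the BDG and Doob inequalities in the $L^p$ sense.

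First, I would decompose $2L_0 = L + L^*$ and treat each piece separately. For the forward part, Dynkin's formula applied to the time-dependent function $F(s,\eta)$ under the stationary process $\eta_t$ yields
$$\int_0^t L F(s,\eta_s)\,ds = F(t,\eta_t) - F(0,\eta_0) - M_t^+ - \int_0^t \partial_s F(s,\eta_s)\,ds,$$
where $M_t^+$ is a forward martingale with predictable quadratic variation $\langle M^+\rangle_t = \int_0^t \Gamma(F(s,\cdot))(\eta_s)\,ds$. For the adjoint part, I would exploit stationarity of $\mu_{N,m}$: the time-reversed process $\hat\eta_s := \eta_{T-s}$ on $[0,T]$ is Markov with generator $L^*$. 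Applying Dynkin's formula to $\tilde F(s,\eta) := F(T-s,\eta)$ along $\hat\eta_s$ and substituting $v = T-s$ in the resulting time integral produces
$$\int_0^t L^* F(s,\eta_s)\,ds = F(0,\eta_0) - F(t,\eta_t) - \bigl(\hat M_T - \hat M_{T-t}\bigr) + \int_0^t \partial_s F(s,\eta_s)\,ds,$$
where $\hat M$ is a martingale in the reversed filtration with $\langle \hat M\rangle_T = \int_0^T \Gamma(F(s,\cdot))(\eta_s)\,ds$; here I use that $\Gamma$ is determined by $L_0$ alone, since $L_1$ is a first-order derivation and its contribution cancels in the carr\'e du champ (cf.\ the comment following \eqref{eq:2.CC}). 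Summing both identities, the $F(t,\eta_t)-F(0,\eta_0)$ and $\partial_s F$ terms cancel, giving the clean decomposition
$$\int_0^t L_0 F(s,\eta_s)\,ds = -\tfrac12\bigl(M_t^+ + (\hat M_T - \hat M_{T-t})\bigr), \qquad t \in [0,T].$$

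The estimate then follows by bounding both martingale contributions. For the forward martingale, Doob's maximal inequality combined with BDG yields
$$E_{\mu_{N,m}}\Bigl[\sup_{t \le T}|M_t^+|^p\Bigr] \le C_p\, E_{\mu_{N,m}}\Bigl[\langle M^+\rangle_T^{p/2}\Bigr],$$
and the trivial bound $\sup_{t \le T}|\hat M_T - \hat M_{T-t}| \le 2\sup_{s \le T}|\hat M_s|$ gives the analogous estimate for the reversed piece via Doob/BDG in the backward filtration. H\"older's inequality with exponent $p/2 \ge 1$ applied to the time integral $\int_0^T \Gamma(F(s,\cdot))(\eta_s)\,ds$ produces a factor $T^{p/2-1}=T^{(p-2)/2}$, and stationarity of $\mu_{N,m}$ under both the forward and reversed dynamics (for every $\gamma \in \R$) converts the remaining expectation into $\int_0^T E^{\mu_{N,m}}[\Gamma(F(t,\cdot))^{p/2}]\,dt$. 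This yields \eqref{eq:4.1-U}. The second claim \eqref{eq:4.2-U} is then immediate by taking $F(s,\cdot) = (-L_0)^{-1} V(s,\cdot)$, which is well defined on the mean-zero subspace thanks to the spectral gap condition (SG) on $L_0$ over $\mathcal{X}_{N,m}$ recalled in Section \ref{sec:1.20.4}.

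The principal obstacle is the backward part of the decomposition when $F$ depends on time: a naive reversal with horizon $t$ produces a martingale whose defining filtration varies with $t$, spoiling Doob's inequality. Fixing a single reversal horizon $T$ and writing the reversed contribution as the increment $\hat M_T - \hat M_{T-t}$ of one fixed martingale is the key step that recovers a form amenable to $L^p$ maximal estimates. The uniformity of $C_p$ in $N$, $m$ and $\gamma$ is automatic: the BDG and Doob constants depend only on $p$, $\Gamma$ is independent of $\gamma$, and $\mu_{N,m}$ is stationary for every $\gamma$.
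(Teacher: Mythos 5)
Your proposal is correct and follows essentially the same route as the paper: the forward/backward Dynkin decomposition (the reversed process having generator $L^*=L_0-L_1$), cancellation of the boundary and $\partial_s F$ terms to write $2\int_0^t L_0F(s,\eta_s)\,ds$ as a combination of a forward martingale and an increment of one fixed backward martingale, then Burkholder/BDG, H\"older in time giving $T^{(p-2)/2}$, and stationarity. The paper's proof is the same argument, merely quoting the backward Dynkin formula directly instead of phrasing it through the adjoint of the generator.
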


\begin{proof}
By Dynkin's formula, we have
$$
F(t,\eta_t) = F(0,\eta_0) + \int_0^t (\partial_s+L) F(s,\eta_s)ds + M_t^+,
$$
for $t\ge 0$,
where $(M_t^+)$ is a martingale with respect to the filtration $(\mathcal{F}_t)$
with $\mathcal{F}_t= \si\{\eta_s; 0\le s \le t\}$.
Similarly, for each $T>0$,  Dynkin's formula for the backward process provides
$$
F(T-t,\eta_{T-t}) = F(T,\eta_T) + \int_0^t (-\partial_s+L_0-L_1) F(T-s,\eta_{T-s})ds + M_t^-,
$$
for $t\in [0,T]$,
where $(M_t^-)$ is a martingale with respect to the filtration $(\mathcal{F}_t^-)$
with $\mathcal{F}_t^-= \si\{\eta_{T-s}; 0\le s \le t\}$.
The quadratic variations of these two martingales are given by
\begin{equation} \label{eq:quad-M}
\begin{aligned} 
& \lan M^+\ran_t = \int_0^t \Ga(F(s,\cdot))(\eta_s)ds, \\
& \lan M^-\ran_t = \int_0^t \Ga(F(T-s,\cdot))(\eta_{T-s})ds.
\end{aligned}
\end{equation}
By these formulas, for $t\in [0,T]$, we obtain
$$
\int_0^t 2L_0 F(s,\eta_s)ds = - M_t^+ + M_{T-t}^- - M_T^-.
$$
We then apply Burkholder's inequality to see that the left-hand side of 
\eqref{eq:4.1-U} is bounded by
$$
C\Big\{ E_{\mu_{N,m}}\big[ \lan M^+\ran_T^{p/2} \big] 
+ E_{\mu_{N,m}}\big[ \lan M^-\ran_T^{p/2} \big]\Big\},
$$
where $C=C_p>0$ is uniform in $N$, $m$ and $\ga$.

Thus, by \eqref{eq:quad-M}, 
applying H\"older's inequality with $p'=p/2$ and $q'= p/(p-2)$
(the case $p=2$ is clear so we assume $p>2$)
and then noting the stationarity of $\eta_t$ under $\mu_{N,m}$, we have
\begin{align*}
E_{\mu_{N,m}}\big[ \lan M^+\ran_T^{p/2} \big] 
& = E_{\mu_{N,m}}\Big[ \Big\{ \int_0^T \Ga(F(t,\cdot))(\eta_t)dt \Big\}^{p/2} \Big] \\
& \le T^{(p-2)/p \,\cdot\, p/2}
E_{\mu_{N,m}}\Big[ \int_0^T \big\{ \Ga(F(t,\cdot))(\eta_t) \big\}^{p/2} dt \Big] \\
&= T^{(p-2)/2} \int_0^T E^{\mu_{N,m}}\big[\big\{ \Ga(F(t,\cdot))\big\}^{p/2}\big] dt.
\end{align*}
Similar for $E\big[ \lan M^-\ran_T^{p/2} \big]$, and we obtain \eqref{eq:4.1-U}.

The second estimate \eqref{eq:4.2-U} is immediate from \eqref{eq:4.1-U}.
\end{proof}

The estimates obtained in Lemma \ref{lem:1.6} can easily be restated for the
diffusively scaled process $\eta_t^N = \eta_{N^2 t}$, given in \eqref{eq:3.Diff},
as follows.

\begin{cor}  \label{cor:1.7}
Let $p\ge 2$.
For a function $F=F(t,\eta)$ on $[0,T]\times \mathcal{X}_{N,m}$, we have
\begin{align*}
E_{\mu_{N,m}}\Big[ \sup_{t\in [0,T]} \Big| \int_0^t L_0F(s,\eta_s^N) ds\Big|^p \Big]
\le C_p N^{-p} T^{(p-2)/2} \int_0^T E^{\mu_{N,m}}\Big[ \{\Ga(F(t,\cdot))\}^{p/2} \Big] dt.
\end{align*}
In particular, for  $V(s,\eta)$ on $[0,T]\times \mathcal{X}_{N,m}$ satisfying
$E^{\mu_{N,m}}[V(s,\cdot)]=0$ for each $s\in [0,T]$,
\begin{align*}
E_{\mu_{N,m}}\Big[ \sup_{t\in [0,T]} \Big| \int_0^t V(s,\eta_s^N) ds\Big|^p \Big]
\le C_p N^{-p} T^{(p-2)/2} \int_0^T E^{\mu_{N,m}}\Big[ \{\Ga((-L_0)^{-1}V(t,\cdot))\}^{p/2} \Big] dt.
\end{align*}
For  $V(t,s,\eta)$, $s\in [0,t]$  satisfying
$E^{\mu_{N,m}}[V(t,s,\cdot)]=0$, we have without supremum in $t$
\begin{align*}
E_{\mu_{N,m}}\Big[ \Big| \int_0^t V(t,s,\eta_s^N) ds\Big|^p \Big]
\le C_p N^{-p} t^{(p-2)/2} \int_0^t E^{\mu_{N,m}}\Big[ \{\Ga((-L_0)^{-1}V(t,s,\cdot))\}^{p/2} \Big] ds.
\end{align*}
\end{cor}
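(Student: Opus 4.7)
The plan is to derive Corollary \ref{cor:1.7} directly from Lemma \ref{lem:1.6} via a time-change argument, exploiting the fact that $\eta^N_t=\eta_{N^2t}$ is simply a time-accelerated version of $\eta_t$ and that Lemma \ref{lem:1.6} applies on any finite horizon.

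The core step is the substitution $u=N^2s$. For any function $G=G(s,\eta)$,
\[
\int_0^t G(s,\eta^N_s)\,ds \;=\; N^{-2}\int_0^{N^2t}\tilde G(u,\eta_u)\,du, \qquad \tilde G(u,\eta):=G(u/N^2,\eta),
\]
so raising to the $p$-th power and taking the supremum over $t\in[0,T]$ produces an outer factor $N^{-2p}$ and expands the horizon of the supremum on the right to $\tilde T:=N^2T$. I would then apply Lemma \ref{lem:1.6} to $\tilde G$ on $[0,\tilde T]$, taking $G=L_0F$ for the first estimate and $G=V$ for the second. The mean-zero property of $\tilde G(u,\cdot)$ under $\mu_{N,m}$ is inherited from that of $G(s,\cdot)$, and $(-L_0)^{-1}$, being purely spatial, commutes with the time substitution so that $(-L_0)^{-1}\tilde V(u,\cdot)=\bigl[(-L_0)^{-1}V\bigr](u/N^2,\cdot)$. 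Lemma \ref{lem:1.6} then contributes a prefactor $C_p\tilde T^{(p-2)/2}=C_pN^{p-2}T^{(p-2)/2}$ together with an integral $\int_0^{\tilde T}(\cdots)\,du$ of the relevant carré du champ raised to $p/2$.

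Changing variables back in that integral with $du=N^2\,ds$ produces one more factor $N^2$ and restores $V$ in the integrand. Collecting factors, $N^{-2p}\cdot N^{p-2}\cdot N^{2}=N^{-p}$, which matches the stated constant exactly, yielding the first two inequalities. For the third estimate I would fix the outer time $t$ and apply the second inequality on horizon $T:=t$ with the function $s\mapsto V(t,s,\cdot)$ taking the role of $V$; the supremum-in-time on the resulting left-hand side dominates the pointwise quantity at $t$, yielding the stated bound immediately.

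The analytic content is entirely contained in Lemma \ref{lem:1.6}, so there is no substantive obstacle. The only delicate point is the bookkeeping of the powers of $N$ and the confirmation that the spatial operator $(-L_0)^{-1}$ is insensitive to the time rescaling; both amount to a one-line check.
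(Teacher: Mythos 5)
Your proposal is correct and follows essentially the same route as the paper: the substitution $u=N^2s$, an application of Lemma \ref{lem:1.6} on the horizon $N^2T$, the change of variables back in the carr\'e du champ integral giving $N^{-2p}\cdot N^{p-2}\cdot N^2=N^{-p}$, and the reduction of the third estimate to the second with $T=t$ via the supremum bound. No gaps.
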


\begin{proof}
By the change of variables $u=N^2 s$, we have
\begin{align*}
\int_0^t L_0F(s,\eta_s^N) ds = N^{-2} \int_0^{N^2t} L_0F(N^{-2}u ,\eta_u) du.
\end{align*}
Therefore, by the first bound \eqref{eq:4.1-U} of Lemma \ref{lem:1.6}, we obtain
\begin{align*}
E_{\mu_{N,m}}\Big[ \sup_{t\in [0,T]} \Big| \int_0^t L_0F(s,\eta_s^N) ds\Big|^p \Big]
 & \le N^{-2p} C_p (N^2 T)^{(p-2)/2} \int_0^{N^2T} E^{\mu_{N,m}}\Big[ \{\Ga(F(N^{-2}u,\cdot))\}^{p/2} \Big] du\\
 & = N^{-2p} C_p (N^2 T)^{(p-2)/2} N^2 \int_0^T E^{\mu_{N,m}}\Big[ \{\Ga(F(t,\cdot))\}^{p/2} \Big] dt,
\end{align*}
which shows the first bound of the corollary.  The second follows from the first.
For the third, the left-hand side is bounded by $E_{\mu_{N,m}}[ 
\sup_{s\in [0,t]} | \int_0^s V(t,r,\eta_r^N) dr|^p ]$ for which 
the above estimate is applicable by taking $T=t$.
\end{proof}

\subsection{Localization of symmetric $\eta$-process and its
reversible measures}
\label{sec:4.2}

\subsubsection{The $\phi$-process}

First, let us consider the localization of the symmetric $\phi$-process
introduced in Section \ref{sec:2.1}.  More precisely, for $\ell\in \N, \ell<N/2$,
we define $(\phi_t(x))_{|x|\le \ell-1}$ by \eqref{eq:phi_t} with $\ga= 0$
and the Dirichlet boundary conditions $\phi_t(-\ell)=0$ and $\phi_t(\ell)=2\ell m$
for some $m\in \R$.  Note that we considered the modified periodic
boundary condition on $\T_N$.

The generator $L_{0,\ell}$ of this $\phi$-process is the operator $L_0$ given
in Section \ref{sec:2.3.1} with the sum restricted to ``$-\ell+1\le x \le \ell-1$'', i.e.\
\begin{align} \label{eq:4.Lphi}
L_{0,\ell} = \frac12 \sum_{x=-\ell+1}^{\ell-1} \Big( \partial_{\phi(x)}^2
+ \big( V'(\phi(x+1)-\phi(x)) -V'(\phi(x)-\phi(x-1)) \big) \partial_{\phi(x)} \Big)
\end{align}
with the boundary
conditions $\phi(-\ell)=0$ and $\phi(\ell) = 2\ell m$.  The corresponding
carr\'e du champ is given by $\Ga(F)$ in Lemma \ref{lem:Rev+Stat-phi}
with the sum restricted to ``$-\ell+1\le x \le \ell-1$''.

It is convenient to introduce the state space for the localized $\phi$-process
with the boundary conditions:
\begin{align} \label{eq:4.Y}
\mathcal{Y}_{\ell,m} = \big\{\phi= (\phi(x))_{x=-\ell}^{\ell};
\phi(-\ell)=0, \phi(\ell) = 2\ell m \big\} \cong \R^{2\ell-1}.
\end{align}

The probability measure $\bar\mu_{\ell,m}$ (now normalizable) on $\mathcal{Y}_{\ell,m}$
defined by
\begin{align} \label{eq:4.bmu}
\bar\mu_{\ell,m}(d\phi) = \frac1{Z_{\ell,m}} e^{-H_{\ell,m}(\phi)} \prod_{x=-\ell+1}
^{\ell-1}d\phi(x),
\end{align}
is symmetric for $L_{0,\ell}$, where
\begin{align} \label{eq:Hem}
H_{\ell,m}(\phi) = \sum_{x=-\ell}^{\ell-1} V(\phi(x+1)-\phi(x))
\end{align}
with $\phi(-\ell)=0$ and $\phi(\ell) = 2\ell m$, and $Z_{\ell,m}$ is a
normalizing constant.

\subsubsection{The $\eta$-process}

Next, determine $(\eta_t(x))_{x\in \La_\ell}, \La_\ell :=[-\ell,\ell-1] \cap\Z$,
(we will omit $\lq\lq\cap\Z$'' in the following) by \eqref{eq:eta-phi} 
(for $x\in \La_\ell$) from
the above $\phi$-process.  Then, they satisfy the equation \eqref{eq:eta_t-pert-2}
with $p=q=1/2$ (i.e.\ $\ga=0$) for $x\in [-\ell+1,\ell-2]$.
At two boundary points $x=-\ell$ and $\ell-1$, we have
\begin{align}  \label{eq:4.bdry}
\begin{aligned}
d\eta_t(-\ell) &= d \phi_t(-\ell+1) - d\phi_t(-\ell) =  d\phi_t(-\ell+1) \\
& = \frac12\Big\{V'(\eta_t(-\ell+1)) - V'(\eta_t(-\ell)) \Big\}dt + dB_t(-\ell+1), 
 \\
d\eta_t(\ell-1) &= d \phi_t(\ell) - d\phi_t(\ell-1) = - d\phi_t(\ell-1) 
  \\
& = -\frac12\Big\{V'(\eta_t(\ell-1)) - V'(\eta_t(\ell-2)) \Big\}dt - dB_t(\ell-1).
\end{aligned}
\end{align}
In other words, we may take $V'(\eta_t(-\ell-1)) = V'(\eta_t(-\ell))$ and 
$B_t(-\ell)=0$ in \eqref{eq:eta_t-pert-2} for $d\eta_t(-\ell)$, and 
$V'(\eta_t(\ell)) = V'(\eta_t(\ell-1))$ and
$B_t(\ell)=0$ in \eqref{eq:eta_t-pert-2}  for $d\eta_t(\ell-1)$.  
From these equations or the construction of $(\eta_t(x))_{x\in \La_\ell}$,
we have the conservation law:
\begin{align*}
\sum_{x=-\ell}^{\ell-1} \eta_t(x) = \sum_{x=-\ell}^{\ell-1}\eta_0(x) = 2\ell m.
\end{align*}

The state space of the $\eta$-process on $\La_\ell$ with the conservation 
law is given by
\begin{align*}
\mathcal{X}_{\ell,m} := \big\{\eta =(\eta(x))_{x\in \La_\ell}\in \R^{2\ell}; \eta^{(\ell)}=m\big\} \cong \R^{2\ell-1}.
\end{align*}
where $\eta^{(\ell)} =\eta^{(\ell)}(0)$ is defined by \eqref{eq:eta-ell-B} taking
$x=0$, and consider the conditional probabilities on $\mathcal{X}_{\ell,m}$:
\begin{align}  \label{mu-ellm}
\mu_{\ell,m} := \nu_u^{\otimes 2\ell}(\cdot|\eta^{(\ell)}=m).
\end{align}

The localization of the operator $L_0\equiv L_{0,\ell}$ on the region 
$\La_\ell$ is naturally
defined as a generator of the process $(\eta_t(x))_{x \in \La_\ell}$
introduced as above.  This is given by the following lemma.

\begin{lem}  \label{lem:1.13}
The operator $L_0\equiv L_{0,\ell}$ is given by
\begin{align*}
L_{0,\ell} = \frac12 \sum_{x=-\ell+1}^{\ell-1}
\Big( \big( \partial_{\eta(x)} - \partial_{\eta(x-1)} \big)^2
- \big( V'(\eta(x)) - V'(\eta(x-1)) \big)\big( \partial_{\eta(x)} - \partial_{\eta(x-1)} \big)  \Big).
\end{align*}
\end{lem}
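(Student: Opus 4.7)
The plan is to derive the formula for $L_{0,\ell}$ by pulling the expression \eqref{eq:4.Lphi} for the localized $\phi$-generator back through the map $\phi \mapsto \eta = \nabla \phi$, using the chain rule identity from Remark \ref{rem:1.3} adapted to the Dirichlet setting. Since the $\eta$-process on $\La_\ell$ was defined from the localized $\phi$-process by $\eta_t(x) = \phi_t(x+1) - \phi_t(x)$, any function $F = F(\eta)$ on $\mathcal{X}_{\ell,m}$ lifts canonically to a function on $\mathcal{Y}_{\ell,m}$ via this map, and the generator of the $\eta$-process applied to $F$ is simply the generator $L_{0,\ell}$ in $\phi$-coordinates (as in \eqref{eq:4.Lphi}) applied to this lift.

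First I would record the chain rule: for $x \in \{-\ell+1, \ldots, \ell-1\}$, among the coordinates $(\eta(y))_{y=-\ell}^{\ell-1}$ only $\eta(x-1) = \phi(x) - \phi(x-1)$ and $\eta(x) = \phi(x+1) - \phi(x)$ depend on $\phi(x)$, with coefficients $+1$ and $-1$ respectively, so
\[
\partial_{\phi(x)} F(\eta) = \partial_{\eta(x-1)} F(\eta) - \partial_{\eta(x)} F(\eta),
\]
exactly as in \eqref{eq:d-phi}. At the two endpoints $x = -\ell+1$ and $x = \ell-1$, this introduces $\partial_{\eta(-\ell)}$ and $\partial_{\eta(\ell-1)}$, both of which are legitimate because $\eta(-\ell) = \phi(-\ell+1)$ and $\eta(\ell-1) = 2\ell m - \phi(\ell-1)$ are genuine coordinates on $\mathcal{X}_{\ell,m}$. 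Substituting this identity into \eqref{eq:4.Lphi}, the second-order term $\partial_{\phi(x)}^2 F$ becomes $(\partial_{\eta(x)} - \partial_{\eta(x-1)})^2 F$, while the coefficient $V'(\phi(x+1) - \phi(x)) - V'(\phi(x) - \phi(x-1))$ equals $V'(\eta(x)) - V'(\eta(x-1))$ and multiplies $-(\partial_{\eta(x)} - \partial_{\eta(x-1)}) F$. Summing over $x = -\ell+1, \ldots, \ell-1$ produces exactly the asserted expression.

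The only real point of care is to confirm that this substitution is consistent with the boundary dynamics \eqref{eq:4.bdry} rather than the bulk dynamics \eqref{eq:eta_t-pert-2}. As a cross-check, one can derive the same formula directly by It\^o's formula applied to $F(\eta_t)$: using \eqref{eq:eta_t-pert-2} in the interior and \eqref{eq:4.bdry} at $x=-\ell$ and $x=\ell-1$, the cross-variations $d\eta_t(x)d\eta_t(y)$ equal $2\,dt$ on the diagonal for interior $x$, $dt$ on the diagonal at the two boundary sites, $-dt$ on adjacent pairs within $\La_\ell$, and $0$ otherwise; these numbers reassemble, after multiplication by $\tfrac12$, into precisely $\tfrac12\sum_{x=-\ell+1}^{\ell-1}(\partial_{\eta(x)} - \partial_{\eta(x-1)})^2$, with matching first-order drift. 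Thus there is no true obstacle; the main bookkeeping is simply to ensure the restriction of the sum to $-\ell+1 \le x \le \ell-1$, so that only tangential derivatives $\partial_{\eta(x)} - \partial_{\eta(x-1)}$ with $x-1, x \in \La_\ell$ appear and no ghost derivatives $\partial_{\eta(-\ell-1)}$ or $\partial_{\eta(\ell)}$ leak in.
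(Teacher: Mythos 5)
Your proposal is correct. Your primary argument --- pulling the localized $\phi$-generator \eqref{eq:4.Lphi} back through the affine bijection $\phi\mapsto\eta=\nabla\phi$ from $\mathcal{Y}_{\ell,m}$ to $\mathcal{X}_{\ell,m}$ via $\partial_{\phi(x)}=\partial_{\eta(x-1)}-\partial_{\eta(x)}$ --- is the route the paper only sketches in the comment following the lemma (citing \eqref{eq:d-phi} and \eqref{eq:4.12-A}), whereas the paper's formal proof is exactly your ``cross-check'': It\^o's formula applied to $F(\eta_t)$, with the cross-variation table ($2\,dt$ on the interior diagonal, $dt$ at the two boundary sites $x=-\ell$ and $x=\ell-1$, $-dt$ on adjacent pairs, $0$ otherwise) and the drift read off from \eqref{eq:eta_t-pert-2} in the bulk and \eqref{eq:4.bdry} at the endpoints. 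So you cover both derivations, just with the emphasis reversed. The change-of-variables route is cleaner because the generator of the image process under a deterministic bijection is automatically the pushforward of the generator, and your observation that $\eta(-\ell)=\phi(-\ell+1)$ and $\eta(\ell-1)=2\ell m-\phi(\ell-1)$ are genuine coordinates (so no ghost derivatives appear at the endpoints) is precisely the point that makes it work; the It\^o route buys an independent confirmation that the boundary SDEs \eqref{eq:4.bdry} are consistent with the Dirichlet conditions, which is the only place a sign or index error could hide.
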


Note that the pair $\lan x-1,x\ran$ for $\partial_{\eta(x)} - \partial_{\eta(x-1)}$
in the above sum appears only for the inside of the region: 
$\lan x-1,x\ran \subset \La_\ell=[-\ell,\ell-1]$
and no boundary term appears.  Also note that $L_{0,\ell}$ acts on the space
$\mathcal{X}_{\ell,m}$.  This is seen also from \eqref{eq:1.vanish} below.
As in Remark \ref{rem:1.3} \eqref{eq:d-phi} (and also \eqref{eq:4.12-A} below), 
this $L_{0,\ell}$ is obtained from 
that in \eqref{eq:4.Lphi} by replacing $\partial_{\phi(x)}$ with
$\partial_{\eta(x-1)} -\partial_{\eta(x)}$.

\begin{proof}
As in the proof of Lemma \ref{lem:1.10}, for a function $F=F(\eta)$ on 
$\R^{2\ell}$, by It\^o's formula,
\begin{align*} 
dF(\eta_t) = \sum_{x=-\ell}^{\ell-1} \partial_{\eta(x)}F (\eta_t)\,d\eta_t(x)
+ \frac12 \sum_{x,y=-\ell}^{\ell-1}  \partial_{\eta(x)} \partial_{\eta(y)} F(\eta_t)
\, d\eta_t(x)d\eta_t(y).
\end{align*} 
Here, we have that
\begin{align*} 
d\eta_t(x)d\eta_t(y) = \left\{
\begin{aligned}
2 dt,& \quad x=y \in [-\ell+1,\ell-2], \\
dt,& \quad x=y=-\ell \text{ or }  \ell-1, \\
-dt,& \quad x=y\pm 1, \;\;  x,y\in [-\ell,\ell-1], \\
0,& \quad \text{otherwise}.
\end{aligned}  \right.
\end{align*} 
Thus, we can calculate the generator $L_{0,\ell}$ by using \eqref{eq:eta_t-pert-2}
with $p=q=1/2$ for $d\eta_t(x)$, $x\in [-\ell+1,\ell-2]$, and \eqref{eq:4.bdry}
for $d\eta_t(-\ell)$ and $d\eta_t(\ell-1)$.
\end{proof}

The operator $L_{0,\ell}$ is symmetric with respect to $\mu_{\ell,m}$ 
for every $m\in \R$,
and the corresponding Dirichlet form is calculated as in the next lemma.

\begin{lem}  \label{lem:1.14}
For functions $F=F(\eta)$ and $G=G(\eta) \in C^2(\R^{2\ell})$, 
if all integrals in the following converge, we have
\begin{align*} 
\int_{\R^{2\ell}} GL_{0,\ell}F d\nu_u^{\otimes 2\ell} 
& = \int_{\R^{2\ell}} FL_{0,\ell}G d\nu_u^{\otimes 2\ell}   \\
& = -\frac12 \int_{\R^{2\ell}} \sum_{x=-\ell+1}^{\ell-1}
 \big( \partial_{\eta(x)} -\partial_{\eta(x-1)} \big) F \cdot
\big( \partial_{\eta(x)} -\partial_{\eta(x-1)} \big) G\, d\nu_u^{\otimes 2\ell}.
\end{align*}
Moreover, for every $m\in \R$ and functions $F$ and $G\in C^2(\mathcal{X}_{\ell,m})$,
if all integrals in the following converge, we have
\begin{align*} 
\int_{\mathcal{X}_{\ell,m}} GL_{0,\ell}F d\mu_{\ell,m} & 
= \int_{\mathcal{X}_{\ell,m}} FL_{0,\ell}G d\mu_{\ell,m}   \\
& = -\frac12 \int_{\mathcal{X}_{\ell,m}} \sum_{x=-\ell+1}^{\ell-1}
 \big( \partial_{\eta(x)} -\partial_{\eta(x-1)} \big) F \cdot
\big( \partial_{\eta(x)} -\partial_{\eta(x-1)} \big) G\, d\mu_{\ell,m} \\
& =: - \mathcal{D}_{\ell,m}(F,G).
\end{align*}
Note again that only the pair $\lan x-1,x\ran \subset \La_\ell=[-\ell,\ell-1]$ appears
in the sum.
\end{lem}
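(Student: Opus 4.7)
The plan is to prove both statements by the same integration-by-parts scheme, organized around the first-order difference operators $D_x := \partial_{\eta(x)} - \partial_{\eta(x-1)}$ that appear explicitly in the expression of $L_{0,\ell}$ from Lemma \ref{lem:1.13}. The central observation is that each $D_x$ corresponds to the direction $e_x - e_{x-1}\in\R^{2\ell}$, which has zero sum, and in particular annihilates any linear statistic $\sum_y \eta(y)$. This single fact drives everything.

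First I would treat the product measure case. Writing the density of $\nu_u^{\otimes 2\ell}$ on $\R^{2\ell}$ as proportional to $\exp\bigl(-\sum_x V(\eta(x)) + \lambda\sum_x \eta(x)\bigr)$ and using the zero-sum property just noted, the $\lambda$-contribution vanishes under $D_x$, so the formal adjoint of $D_x$ with respect to $\nu_u^{\otimes 2\ell}$ is
\[
D_x^{\,*} = -D_x + \bigl(V'(\eta(x)) - V'(\eta(x-1))\bigr),
\]
independently of $\lambda$ (the Gaussian-type decay from \eqref{eq:2.V} kills the boundary terms at infinity, assuming as in the statement that the integrals converge). Applied to $\int G\, D_x^2 F\, d\nu_u^{\otimes 2\ell}$, this yields two contributions: a symmetric Dirichlet-type piece $-\int D_x G\cdot D_x F\, d\nu_u^{\otimes 2\ell}$, and an extra term $\int G\, D_x F\, \bigl(V'(\eta(x))-V'(\eta(x-1))\bigr)\, d\nu_u^{\otimes 2\ell}$ which is exactly the drift term of $L_{0,\ell}$ carried with the opposite sign. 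After summing in $x\in[-\ell+1,\ell-1]$ and combining with the drift built into $L_{0,\ell}$, the two pieces cancel identically, leaving
\[
\int G L_{0,\ell} F\, d\nu_u^{\otimes 2\ell} = -\tfrac12 \sum_{x=-\ell+1}^{\ell-1} \int D_x F\cdot D_x G\, d\nu_u^{\otimes 2\ell},
\]
whose symmetry in $F,G$ is manifest.

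For the conditional measure $\mu_{\ell,m}$, I would exploit that each $D_x$ is tangent to the hyperplane $\{\eta^{(\ell)}=m\}\subset\R^{2\ell}$. Consequently, for $F,G$ defined on $\mathcal{X}_{\ell,m}$ (extended off the hyperplane however one likes, for instance constantly along $(1,\dots,1)$), both $L_{0,\ell}F$ and the integrand $D_x F\cdot D_x G$ depend only on the restriction to $\mathcal{X}_{\ell,m}$. One route is disintegration: since $\nu_u^{\otimes 2\ell}=\int \mu_{\ell,m'}\, p(m')\, dm'$ with $p$ the density of $\eta^{(\ell)}$ under $\nu_u^{\otimes 2\ell}$, feeding this into the identity just proved and using uniqueness of disintegration forces
\[
\int G L_{0,\ell} F\, d\mu_{\ell,m'} = -\tfrac12 \sum_{x=-\ell+1}^{\ell-1} \int D_x F\cdot D_x G\, d\mu_{\ell,m'}
\]
for almost every $m'$, hence (by continuity in $m'$) for every $m'$. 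A more direct route is to notice that the density of $\mu_{\ell,m}$ with respect to surface Lebesgue measure on the hyperplane is proportional to $e^{-\sum_x V(\eta(x))}$ — the factor $e^{\lambda \sum_x \eta(x)}=e^{2\ell \lambda m}$ being constant on the slice and absorbed into the normalization — so the tangential integration by parts with the same $D_x^{\,*}$ goes through verbatim and produces the same cancellation.

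The main (and essentially only) technical point is the exact cancellation between the drift part of $L_{0,\ell}$ and the lower-order term produced by differentiating the log-density; this is made possible by the specific form $D_x = \partial_{\eta(x)}-\partial_{\eta(x-1)}$ and by the fact that $D_x$ kills linear statistics. Everything else is routine bookkeeping, and the argument is uniform in $u$ (equivalently $\lambda$), consistently with the independence of $\mu_{\ell,m}$ from that choice recorded below \eqref{eq:2.13}.
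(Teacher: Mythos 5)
Your proposal is correct and follows essentially the same route as the paper: the first identity is the same direct integration by parts, hinging on the fact that $\partial_{\eta(x)}-\partial_{\eta(x-1)}$ annihilates the linear statistic $\la(u)\sum_y\eta(y)$ in the log-density so that the drift of $L_{0,\ell}$ cancels exactly against the adjoint term. Your disintegration argument for $\mu_{\ell,m}$ is the same as the paper's skew-product step (testing against an arbitrary $f(\eta^{(\ell)})$ and using \eqref{eq:1.vanish} that the difference operators are tangent to the hyperplane), so no further comment is needed.
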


\begin{proof}
The first identity in the lemma is shown by a direct calculation noting that
$$
d\nu_u^{\otimes 2\ell} = \frac1{\hat Z_{\la(u)}^{2\ell}}
\exp \Big\{ -\sum_{x=-\ell}^{\ell-1} V(\eta(x))+\la(u) 2\ell \eta^{(\ell)} \Big\}
\prod_{x=-\ell}^{\ell-1} d\eta(x).
$$

To show the second, take a function $F(\eta) =f(\eta^{(\ell)}) \tilde F(\eta)$ 
on $\R^{2\ell}$ given as a skew product of functions $f$ on $\R$ and 
$\tilde F$ on $\mathcal{X}_{\ell, m}$.  Then, since
\begin{align}  \label{eq:1.vanish}
\big\{ \partial_{\eta(x)} - \partial_{\eta(x-1)} \big\} f(\eta^{(\ell)})
= f'(\eta^{(\ell)}) \big\{ \partial_{\eta(x)} - \partial_{\eta(x-1)} \big\} \eta^{(\ell)}
=0,
\end{align}
for $x\in [-\ell+1,\ell-1]$,
we see $\big\{ \partial_{\eta(x)} - \partial_{\eta(x-1)} \big\} F(\eta)
= f(\eta^{(\ell)})\big\{ \partial_{\eta(x)} - \partial_{\eta(x-1)} \big\} \tilde F(\eta)$
and $L_0 F(\eta) = f(\eta^{(\ell)}) L_0 \tilde F(\eta)$.
In particular, from the first identity, we obtain
\begin{align*}
\int_\R & f(m) \nu_u^{\otimes 2\ell} \circ (\eta^{(\ell)})^{-1}(dm)
\int_{\mathcal{X}_{\ell,m}} \tilde GL_{0,\ell} \tilde F d\mu_{\ell,m} \\
& = -\frac12 \int_\R f(m) \nu_u^{\otimes 2\ell} \circ (\eta^{(\ell)})^{-1}(dm) \\
& \hskip 20mm \times 
\int_{\mathcal{X}_{\ell,m}} \sum_{x\in [-\ell+1,\ell-1]}
 \big( \partial_{\eta(x)} -\partial_{\eta(x-1)} \big) \tilde F \cdot
\big( \partial_{\eta(x)} -\partial_{\eta(x-1)} \big) \tilde G\, d\mu_{\ell,m},
\end{align*}
where $\nu_u^{\otimes 2\ell} \circ (\eta^{(\ell)})^{-1}$ denotes the 
distribution of $\eta^{(\ell)}$ under $\nu_u^{\otimes 2\ell}$.
Since $f(m)$ is arbitrary, we obtain the second identity.
\end{proof}

Note that, if $\phi$ is $\bar\mu_{\ell,m}$-distributed (recall \eqref{eq:4.bmu}
for $\bar\mu_{\ell,m}$), then $\eta=(\eta(x))_{x\in \La_\ell}$,
defined by \eqref{eq:eta-phi} for $x\in \La_\ell$ from $\phi\in
\mathcal{Y}_{\ell,m}$, is $\mu_{\ell,m}$-distributed.

\begin{rem}  \label{rem:4.1-A}
The $\phi$-variable $\phi=(\phi(x))_{x=-\ell}^\ell \in \mathcal{Y}_{\ell,m}$
can be identified with $\eta=(\eta(x))_{x\in \La_\ell} \in \mathcal{X}_{\ell,m}$
by $\eta(x) = \nabla\phi(x) := \phi(x+1)-\phi(x)$
defined by \eqref{eq:eta-phi} for $x\in \La_\ell$, or
$\phi(x) = \sum_{y=-\ell}^{x-1}\eta(y), x \in [-\ell,\ell]$.
Thus, a function $F=F(\phi)$ on $\mathcal{Y}_{\ell,m}$ is identified with
$F=F(\eta)$ on $\mathcal{X}_{\ell,m}$.  Then, as in Remark \ref{rem:1.3}, we have
\begin{equation}  \label{eq:4.12-A}
\partial_{\phi(x)}F(\eta) = \partial_{\eta(x-1)}F(\eta)  -  \partial_{\eta(x)}F(\eta),
\quad x \in [-\ell+1,\ell-1].
\end{equation}
\end{rem}

\subsection{Sobolev-type inequality due to Littlewood-Paley-Stein inequality}
\label{sec:4.3}

We state the Sobolev-type inequality, which follows from the 
Littlewood-Paley-Stein inequality, shown by Bakry \cite{Ba87}.  
As previously mentioned, we refer to it as
the Littlewood-Paley-Stein inequality.
Recall $\ell\in \N, \ell<N/2$, $\La_\ell =[-\ell,\ell-1]$ and Remark \ref{rem:1.3} for
$$
\nabla^* g(x) = g(x-1)-g(x),\quad x \in [-\ell+1,\ell-1],
$$ 
for $g=(g(x))_{x\in \La_\ell}$.

\begin{prop} \label{prop:LPS}
Let $1<p<\infty$ and assume the function $V$ satisfies the condition
\begin{align} \label{eq:4.Vconv}
V\in C^2(\R) \quad \text{and} \quad V''\ge 0.
\end{align}
\noindent
{\rm (1)}  Then, for every $f=f(\eta)$ on $\mathcal{X}_{\ell,m}$, we have
\begin{align} \label{eq:LPS-D}
c \|\nabla_{\La_\ell}^* D_\eta f\|_{L^p(\mu_{\ell,m})}  
\le \|(-L_{0,\ell})^{1/2}f\|_{L^p(\mu_{\ell,m})} \le 
C \|\nabla_{\La_\ell}^* D_\eta f\|_{L^p(\mu_{\ell,m})},
\end{align}
where $D_\eta f(\eta) = (D_\eta f(x,\eta))_{x\in \La_\ell}$ with 
$D_\eta f(x,\eta):= \partial_{\eta(x)} f(\eta)$ and 
$\|\nabla_{\La_\ell}^* D_\eta f\|_{L^p(\mu_{\ell,m})}$ denotes the
$L^p(\mu_{\ell,m})$-norm of the Euclidean norm of
\begin{align} \label{eq:4.15-A}
\nabla_{\La_\ell}^* D_\eta f (\eta):= 
\big( \nabla^* D_\eta f(x,\eta)\big)_{x=-\ell+1}^{\ell-1}
\in \R^{2\ell-1},
\end{align}
that is,
\begin{align} \label{eq:4.15-B}
|\nabla_{\La_\ell}^* D_\eta f(\eta)| := \Big( \sum_{x=-\ell+1}^{\ell-1}
(\nabla^* D_\eta f(x,\eta))^2\Big)^{1/2}.
\end{align}
Here, $c=c_p,C=C_p>0$ are universal constants, depending
only on $p$ and, in particular, being uniform in $\ell$ and $m$.  
Note that $-L_{0,\ell}$ is symmetric on $L^2(\mu_{\ell,m})$ and 
the operator $(-L_{0,\ell})^{1/2}$ is defined via its spectral decomposition.

\noindent
{\rm (2)} The same estimate as \eqref{eq:LPS-D} holds for
$(\mathcal{X}_{N,m}, \mu_{N,m},\T_N,L_0)$ replacing
$(\mathcal{X}_{\ell,m}, \mu_{\ell,m},\La_\ell,L_{0,\ell})$.
Note that $|\nabla_{\T_N}^* D_\eta f|^2$ is defined as in \eqref{eq:2.13-Q}.
\end{prop}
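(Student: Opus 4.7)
The plan is to transfer the inequality to the $\phi$-level, where the dynamics is a standard Langevin diffusion with a convex reference Hamiltonian, and then invoke Bakry's Littlewood-Paley-Stein inequality from \cite{Ba87}. By Remark \ref{rem:4.1-A} and the identity \eqref{eq:4.12-A}, the map $\phi \mapsto \eta = \nabla\phi$ is a bijection from $\mathcal{Y}_{\ell,m}$ onto $\mathcal{X}_{\ell,m}$ that sends $\bar\mu_{\ell,m}$ to $\mu_{\ell,m}$, identifies the localized $\phi$-generator with $L_{0,\ell}$ on $\mathcal{X}_{\ell,m}$, and under the relation $D_\phi = \nabla^* D_\eta$ identifies the carr\'e du champ as
\[
\Ga(f) = |D_\phi f|^2 = |\nabla^*_{\La_\ell} D_\eta f|^2.
\]
Therefore \eqref{eq:LPS-D} is equivalent, at the $\phi$-level, to the two-sided bound
\[
c_p \|\sqrt{\Ga(f)}\|_{L^p(\bar\mu_{\ell,m})} \le \|(-L_{0,\ell})^{1/2} f\|_{L^p(\bar\mu_{\ell,m})} \le C_p \|\sqrt{\Ga(f)}\|_{L^p(\bar\mu_{\ell,m})},
\]
for a Langevin diffusion on $\R^{2\ell-1}$ with reference measure $\bar\mu_{\ell,m} \propto e^{-H_{\ell,m}(\phi)} d\phi$.

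Next I would verify the Bakry-Emery curvature-dimension criterion $CD(0,\infty)$ for this Langevin generator. On $\mathcal{Y}_{\ell,m} \cong \R^{2\ell-1}$ one has $L_{0,\ell} = \tfrac12(\De_\phi - \nabla H_{\ell,m} \cdot \nabla)$ with Hamiltonian $H_{\ell,m}$ from \eqref{eq:Hem}, and a direct computation gives
\[
\mathrm{Hess}\, H_{\ell,m}(\phi) = \sum_{x=-\ell}^{\ell-1} V''(\phi(x+1)-\phi(x))\,(e_{x+1}-e_x)(e_{x+1}-e_x)^T,
\]
which is a sum of rank-one positive semi-definite matrices whenever $V'' \ge 0$. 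By the Bakry-Emery formula for Langevin diffusions, $\mathrm{Hess}\, H_{\ell,m} \ge 0$ is equivalent to $\Ga_2 \ge 0$, i.e.\ $CD(0,\infty)$.

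With $CD(0,\infty)$ established, Bakry's theorem \cite{Ba87} yields the two-sided $L^p$ bound with constants $c_p, C_p$ depending only on $p$: the upper bound is the classical Riesz transform inequality for symmetric diffusion semigroups, while the lower (reverse) Riesz inequality is precisely where the nonnegative curvature condition is essential. Crucially, no dependence on the dimension $2\ell-1$ nor on the boundary parameter $m$ enters the constants, which gives the claimed uniformity. Part (2) is handled by exactly the same argument after first passing to the quotient space $\widetilde{\mathcal{Y}}_N$ of Remark \ref{rem:2.1-A} to factor out the degenerate direction of global shifts of $\phi$; on this quotient the Hessian of $H_{N,m}$ is again the sum of rank-one nonnegative terms indexed by the edges of $\T_N$, so $CD(0,\infty)$ holds and Bakry's theorem applies uniformly in $N$ and $m$.

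The main obstacle is technical rather than conceptual: applying Bakry's abstract theorem requires exhibiting a sufficiently regular Markov diffusion semigroup on $L^2(\bar\mu_{\ell,m})$, together with a dense algebra of test functions that is stable under $L_{0,\ell}$ and the chain rule, on which the integrated $\Ga_2 \ge 0$ identity can be verified. Under the standing assumption $V \in C^2(\R)$ with $V'' \ge 0$, and a fortiori under \eqref{eq:Assump} where $V'' \ge c_- > 0$, these prerequisites are standard; the careful execution is deferred to Appendix \ref{sec:B}.
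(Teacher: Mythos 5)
Your proposal is correct and follows essentially the same route as the paper: transfer to the $\phi$-level via $D_\phi=\nabla^*D_\eta$, compute $\mathrm{Hess}\,H_{\ell,m}$ as the edge-wise sum $\sum_x V''(\phi(x+1)-\phi(x))(\xi_{x+1}-\xi_x)^2\ge 0$, invoke Bakry \cite{Ba87} with curvature lower bound $0$ (the paper's condition $R=\mathrm{Ric}-\nabla\nabla\log\rho\ge 0$ is exactly your $CD(0,\infty)$, as Remark \ref{rem:4.2-C} notes), and handle part (2) on the quotient space $\widetilde{\mathcal{Y}}_N$. The only slight misstatement is that Appendix \ref{sec:B} does not supply the regularity prerequisites you defer to it — it discusses a weaker variant of the inequality — but this does not affect the argument.
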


\begin{proof}
(1) We consider in terms of the $\phi$-variables.
To apply for functions $f$ of the variable $\eta$, we may
replace $D_\phi$ by $\nabla^* D_\eta$ according to  \eqref{eq:4.12-A}.

We apply the results of \cite{Ba87}, taking 
$E=\mathcal{Y}_{\ell,m}\cong \R^{2\ell-1}$ and $\rho(\phi) = 
e^{-H_{\ell,m}(\phi)}/ Z_{\ell,m}$ with $H_{\ell,m}(\phi)$ given in 
\eqref{eq:Hem} and $\Z_{\ell,m}$ below that.  Since the Ricci curvature tensor
of our space $E$ is $0$, the symmetric tensor 
$$
R:= Ric-\nabla\nabla (\log \rho)
= (\partial_{\phi(x)}\partial_{\phi(y)} H_{\ell,m}(\phi))_{x,y\in [-\ell+1,\ell-1]}
$$
is the Hessian of $H_{\ell,m}(\phi)$.  By a simple calculation, 
for $x,y\in [-\ell+1,\ell-1]$, we see that
\begin{align*}
\partial_{\phi(x)}\partial_{\phi(y)} H_{\ell,m}(\phi)
= \left\{
\begin{aligned}
V''(\phi(x+1)-\phi(x)) + V''(\phi(x)-& \phi(x-1)),
\quad x =y, \\
-V''(\phi(x+1)-\phi(x)), & 
\qquad y=x+1, \\
-V''(\phi(x)-\phi(x-1)), & 
\qquad y=x-1, \\
0,& \qquad |x-y|\ge 2.
\end{aligned}
\right.
\end{align*}
Therefore, for $\xi=(\xi_x)_{x=-\ell+1}^{\ell-1}\in \R^{2\ell-1}$,
by setting $\xi_{-\ell}=\xi_\ell=0$, we have
\begin{align*}
(R\xi,\xi) & = \sum_{x=-\ell+1}^{\ell-1}
\{V''(\phi(x+1)-\phi(x)) + V''(\phi(x)-\phi(x-1))\} \xi_x^2 \\
& \quad -2 \sum_{x=-\ell+1}^{\ell-2}
V''(\phi(x+1)-\phi(x)) \xi_x \xi_{x+1} \\
& = \sum_{x=-\ell}^{\ell-1}
V''(\phi(x+1)-\phi(x)) (\xi_{x+1}- \xi_x)^2,
\end{align*}
where $(R\xi,\xi)$ denotes the inner product of $R\xi$ and $\xi$ in $\R^{2\ell-1}$.
If $V$ satisfies \eqref{eq:4.Vconv}, then we have $(R\xi,\xi)\ge 0$.  Therefore, 
we can take $\a=0$ in \cite{Ba87}, p.138 and obtain 
\begin{align} \label{eq:LPS-D-B}
c \|D_\phi f\|_{L^p(\bar\mu_{\ell,m})}  
\le \|(-L_{0,\ell})^{1/2}f\|_{L^p(\bar\mu_{\ell,m})} \le 
C \|D_\phi f\|_{L^p(\bar\mu_{\ell,m})}.
\end{align}
for every $f=f(\phi)$ on $\mathcal{Y}_{\ell,m}$, with universal constants
$c=c_p,C=C_p>0$ depending only on $p$, where $D_\phi f = (\partial_{\phi(x)}
f)_{x=-\ell+1}^{\ell-1}$.  This implies \eqref{eq:LPS-D}.

(2) 
The estimate \eqref{eq:LPS-D-B} holds with the same constants $c_p, C_p>0$
at the $\phi$-level on the quotient space $\widetilde{\mathcal{Y}}_{N} = 
\mathcal{Y}_{N}/\!\sim$ with a normalized measure $\bar \mu_{N,m}$;
see Remark \ref{rem:2.1-A} for $\widetilde{\mathcal{Y}}_{N}$ and
$\bar\mu_{N,m}$.
The corresponding $\eta$-space is $\mathcal{X}_{N,m}$ and we obtain 
the estimate \eqref{eq:LPS-D} on $\T_N$.
\end{proof}

\begin{rem}  \label{rem:4.2-C}
We can also apply Theorems 2 and 7 of \cite{Ba}, noting that 
$\Ga(u) = |Du|^2\equiv
\sum_{x=-\ell+1}^{\ell-1}(\partial_{\phi(x)} u)^2$ and the operator
$C=(-L_{0,\ell})^{1/2}$.  See the bottom of p.146 of \cite{Ba}
for the equivalence between $\Ga_2\ge 0$ and Hess $H_{\ell,m}\ge 0$ 
(as a matrix), which follows from the condition \eqref{eq:4.Vconv} 
as we saw above.
The result of \cite{Ba} implies the upper bound in \eqref{eq:LPS-D}
for every $1<p<\infty$; however, the lower bound is shown only for $p>2$.
\end{rem}

\subsection{Weak $L^p$ Poincar\'e inequality}
\label{sec:4.4}

\subsubsection{Spectral gap for $L_{0,\ell}$} \label{sec:1.20.4}

In the following, we discuss in general under the following spectral gap assumption:
\begin{itemize}
\item[(SG)] There exists a constant $C>0$, uniform in $\ell$ and $m$, such that
$$
\mathcal{D}_{\ell,m}(F,F)
\ge C\ell^{-2} \|F\|_{L^2(\mu_{\ell,m})}^2
$$
for every $F\in L^2(\mu_{\ell,m})$ satisfying $E^{\mu_{\ell,m}}[F]=0$;
recall Lemma \ref{lem:1.14} for $\mathcal{D}_{\ell,m}(F,F) \equiv
E^{\mu_{\ell,m}}[F\cdot (-L_{0,\ell})F]$.
\end{itemize}

In fact, Caputo \cite{Ca03} Corollary 5.2 shows that (SG) holds with a uniform
constant $C>0$, independent of $\ell$ and $m$, under the assumption for $V$
that $V=V_1+V_2$, where $V_1\in C^2(\R)$ satisfies $c_-\le V_1''(\eta) \le c_+$
for some $c_+\ge c_->0$, and $V_2\in C_b^2(\R)$; see also \cite{Ca04}.
The corresponding logarithmic Sobolev inequality is studied by  
\cite{MO}.  Theorem 6.1.14 in \cite{DS}  (p.242) with $\a >0$ and $\b=0$ 
(i.e.\ logarithmic Sobolev inequality) implies
(6.1.19) in Corollary 6.1.17 (p.244) and, from this corollary, we have the
lower bound of the spectral gap of $-L_{0,\ell}$ on $L^2(\mu_{\ell,m})$
given as $\lq\lq \ge 2/\a \ge 2/(c_2\ell^2)$'',
i.e.\ (SG) holds.

\begin{rem}
If $\mathcal{D}_{\ell,m}(F,F)=0$, then we have $\partial_{\eta(x)}F
= \partial_{\eta(x+1)}F$ for the function $F$ on $\mathcal{X}_{\ell,m}$.
This implies that $F\equiv$ const.  The condition {\rm (SG)} guarantees
the existence of $(-L_{0,\ell})^{-1}$ on $L^2(\mu_{\ell,m})\ominus\{\text{const}\}$.
The uniformity of the constant 
$C>0$ in $m$ in (SG) may be relaxed as in (2.5) of
\cite{BFS} in an averaged sense.
\end{rem}

\subsubsection{Weak $L^p$ Poincar\'e inequality}  \label{sec:1.22}

We have the following weak $L^p$ Poincar\'e inequality, written in terms
of the $\phi$-variables.  Recall \eqref{eq:4.bmu} for $\bar\mu_{\ell,m}$
and $D_\phi f = (\partial_{\phi(x)}f)_{x=-\ell+1}^{\ell-1}$ for a function 
$f=f(\phi)$ on $\mathcal{Y}_{\ell,m} = \R^{2\ell-1}$.

\begin{lem}  \label{lem:1.15}
{\rm (1)} We assume the condition {\rm (SG)} in Section \ref{sec:1.20.4}
and the following bound for $\bar\mu_{\ell,0}$ and every $p'>2$
\begin{align} \label{eq:uniform-phi}
\inf_{\bar\phi\in \R^{2\ell-1}} \int_{\R^{2\ell-1}} | \phi(x)-\bar\phi(x)|^{p'} 
\bar\mu_{\ell,0}(d\phi)   \le C \ell^{p'/2},
\end{align}
with $C=C_{p'}>0$, uniform in $x$ and $\ell$.  
Then, for $2< q < p<\infty$, we have
\begin{align}  \label{eq:4.WP}
\|f\|_{L^q(\bar\mu_{\ell,m})} \le C_{p,q} \ell \|D_\phi f\|_{L^p(\bar\mu_{\ell,m})}
\end{align}
for $f$ satisfying $\int_{\R^{2\ell-1}}f(\phi) \bar\mu_{\ell,m}(d\phi)=0$,
where $C_{p,q}>0$ is uniform in $\ell$ and $m$. \\
{\rm (2)} The uniform bound \eqref{eq:uniform-phi} holds if $V$ satisfies
\begin{align} \label{eq:4.VSconv}
V\in C^2(\R) \quad \text{and} \quad V''\ge c >0.
\end{align}
\end{lem}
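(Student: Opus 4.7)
The plan is to reduce part (1) to the spectral gap for $-L_{0,\ell}$ via its heat semigroup $P_s=e^{sL_{0,\ell}}$ combined with the upper bound \eqref{eq:LPS-D-B} in the Littlewood--Paley--Stein inequality.  For $f$ with $\int f\,d\bar\mu_{\ell,m}=0$, I set $g:=(-L_{0,\ell})^{1/2}f$, which is again mean-zero, so that $f=(-L_{0,\ell})^{-1/2}g$ on the mean-zero subspace.  I use the subordination identity
\[
(-L_{0,\ell})^{-1/2}=\frac{1}{\sqrt{\pi}}\int_0^\infty s^{-1/2}P_s\,ds
\]
to reduce the bound to a norm estimate on $P_s$.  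The assumption (SG) gives the spectral gap $\la_0\ge c\ell^{-2}$, so $\|P_s g\|_{L^2}\le e^{-\la_0 s}\|g\|_{L^2}$ for mean-zero $g$.  Since $P_s$ is a symmetric Markov semigroup, it contracts $L^\infty$ and preserves mean-zero, so $\|P_s\|_{L^\infty_0\to L^\infty_0}\le 1$.  Riesz--Thorin interpolation yields
\[
\|P_s g\|_{L^q(\bar\mu_{\ell,m})}\le C\,e^{-2\la_0 s/q}\|g\|_{L^q(\bar\mu_{\ell,m})}
\]
for $q\in[2,\infty)$ and mean-zero $g$.  Integrating in $s$, $\|f\|_q\le C\sqrt{q/\la_0}\,\|g\|_q\le C_q\ell\|g\|_q$.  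Finally, by Jensen's inequality (since $\bar\mu_{\ell,m}$ is a probability measure and $q\le p$) and \eqref{eq:LPS-D-B}, $\|g\|_q\le\|g\|_p\le C_p\|D_\phi f\|_p$, giving $\|f\|_q\le C_{p,q}\ell\|D_\phi f\|_p$.

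For part (2) I would invoke the Brascamp--Lieb inequality.  The Hessian calculation in the proof of Proposition \ref{prop:LPS} already establishes that
\[
\mathrm{Hess}\,H_{\ell,0}(\xi,\xi)=\sum_{x=-\ell}^{\ell-1}V''(\phi(x+1)-\phi(x))(\xi_{x+1}-\xi_x)^2\ge c\sum_{x=-\ell}^{\ell-1}(\xi_{x+1}-\xi_x)^2
\]
with $\xi_{\pm\ell}=0$, so $\bar\mu_{\ell,0}$ is strongly log-concave with Hessian lower bound $c$ times the one-dimensional discrete Dirichlet energy.  Brascamp--Lieb applied to the coordinate $\phi(x)$ then gives $\mathrm{Var}_{\bar\mu_{\ell,0}}(\phi(x))\le c^{-1}G_D(x,x)\le C\ell$, where $G_D$ is the Green's function of the discrete Dirichlet Laplacian on $[-\ell,\ell]$.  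The Bakry--\'Emery criterion upgrades this variance bound to an LSI, hence to sub-Gaussian concentration for the linear coordinate $\phi(x)$; thus $E^{\bar\mu_{\ell,0}}[|\phi(x)-E^{\bar\mu_{\ell,0}}[\phi(x)]|^{p'}]\le C_{p'}\ell^{p'/2}$ uniformly in $x$ and $\ell$ for every $p'>2$, giving \eqref{eq:uniform-phi}.

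The main technical point is the Riesz--Thorin step on the mean-zero subspace, which goes through cleanly because $P_s$ commutes with the mean projection $h\mapsto h-E^{\bar\mu_{\ell,m}}[h]$ and preserves mean-zero, so the two endpoint bounds can be interpolated on the same subspace.  One thing worth flagging: the route above uses only (SG) and LPS and does not visibly invoke the moment hypothesis \eqref{eq:uniform-phi} from part (1).  The moment bound would enter naturally in a more elementary mean-value proof starting from $f(\phi)-f(\bar\phi)=\int_0^1 D_\phi f(\Phi_t)\cdot(\phi-\bar\phi)\,dt$ with $\Phi_t=\bar\phi+t(\phi-\bar\phi)$ and H\"older to split $\||\phi-\bar\phi|\|_r$ (controlled by \eqref{eq:uniform-phi}) from $\|D_\phi f(\Phi_t)\|_p$; such a route, however, must confront the density-ratio obstruction that the pushed-forward law of $\Phi_t$ is not uniformly comparable to $\bar\mu_{\ell,m}$, so the semigroup approach above appears cleaner for the present lemma.
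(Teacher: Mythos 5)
Your semigroup route for part (1) is genuinely different from the paper's and, as a piece of analysis, it essentially works: subordination plus (SG) gives $\|(-L_{0,\ell})^{-1/2}\|_{L^q_0\to L^q_0}\le C_q\,\lambda_0^{-1/2}\le C_q\ell$ on the mean-zero subspace, and the upper bound in \eqref{eq:LPS-D-B} converts $\|(-L_{0,\ell})^{1/2}f\|_{L^p}$ into $\|D_\phi f\|_{L^p}$; it even yields the endpoint $q=p$, which the paper only obtains conditionally in Remark \ref{rem:4.2}\,(1). The catch is that \eqref{eq:LPS-D-B} is available only under the convexity hypothesis \eqref{eq:4.Vconv}, which is \emph{not} among the hypotheses of Lemma \ref{lem:1.15}: the lemma assumes only (SG) and the moment bound \eqref{eq:uniform-phi}, and the paper's proof uses exactly these. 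It interpolates (Riesz--Thorin applied to the map $u=D_\phi f\mapsto f$, realized by integrating the closed $1$-form $u$) between the $L^2\to L^2$ bound coming from (SG) and an $L^\infty\to L^{p_1}$ bound $\|f\|_{L^{p_1}}\le C\ell\,\|D_\phi f\|_{L^\infty}$, the latter obtained from $|f(\phi)-f(\phi')|\le\|D_\phi f\|_{L^\infty}\,|\phi-\phi'|$ together with \eqref{eq:uniform-phi}. Taking the gradient in $L^\infty$ at the second endpoint is precisely what sidesteps the density-ratio obstruction you flag, at the price of the strict inequality $q<p$. So you have proved a (stronger) variant of the lemma under a different hypothesis; in the paper's application (Section \ref{sec:5}, where \eqref{eq:Assump} holds) this is harmless, but as a proof of the lemma as stated it replaces \eqref{eq:uniform-phi} by convexity, which changes the logical structure the author evidently wanted (a Poincar\'e inequality independent of the Littlewood--Paley--Stein machinery).

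Part (2) has a genuine quantitative gap. The Brascamp--Lieb variance bound $\mathrm{Var}_{\bar\mu_{\ell,0}}(\phi(x))\le C\ell$ is the right start, but the subsequent ``upgrade via Bakry--\'Emery LSI'' loses a power of $\ell$: the Bakry--\'Emery constant is governed by the smallest eigenvalue of $\mathrm{Hess}\,H_{\ell,0}\ge c(-\Delta_{D})$ (discrete Dirichlet Laplacian), which is of order $\ell^{-2}$, so the LSI constant is $O(\ell^{2})$ and the Herbst argument gives concentration of the $1$-Lipschitz coordinate $\phi(x)$ only at scale $\ell$, i.e.\ $E[|\phi(x)-E\phi(x)|^{p'}]\le C\ell^{p'}$ rather than the required $\ell^{p'/2}$. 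The variance bound cannot be fed back into the LSI to improve this. To obtain \eqref{eq:uniform-phi} one should apply Brascamp--Lieb in its moment (or exponential-moment) form directly to the linear functional $\phi(x)$: its centered $p'$-th moment is dominated by that of the Gaussian with covariance $(\mathrm{Hess}\,H_{\ell,0})^{-1}\le c^{-1}(-\Delta_{D})^{-1}$, whose diagonal is the Dirichlet Green's function and is $\le C\ell$, yielding $C_{p'}\ell^{p'/2}$. This is exactly the pinned-random-walk comparison the paper invokes.
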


\begin{proof}
(1) By the condition (SG) written in terms of the $\phi$-variables recalling
Remark \ref{rem:4.1-A}, we have
\begin{align}  \label{eq:1.P1}
\|f\|_{L^2(\bar\mu_{\ell,m})} \le C \ell \|D_\phi f\|_{L^2(\bar\mu_{\ell,m})}
\end{align}
for $f$ satisfying $\int_{\R^{2\ell-1}}f(\phi) \bar\mu_{\ell,m}(d\phi)=0$.

Next, assuming \eqref{eq:uniform-phi}, let us prove for every $p'>2$,
\begin{align}  \label{eq:1.P2}
\|f\|_{L^{p'}(\bar\mu_{\ell,m})} \le C_{p'} \ell \|D_\phi f\|_{L^\infty(\bar\mu_{\ell,m})}
\end{align}
for $f$ satisfying $\int_{\R^{2\ell-1}}f(\phi) \bar\mu_{\ell,m}(d\phi)=0$, where
$$
\|D_\phi f\|_{L^\infty} \equiv \|D_\phi f\|_{L^\infty(\bar\mu_{\ell,m})}
:= \sup_{\phi\in \R^{2\ell-1}} |D_\phi f(\phi)|
= \sup_{\phi\in \R^{2\ell-1}} \Big( \sum_{x=-\ell+1}^{\ell-1} \big| \partial_{\phi(x)}f(\phi)\big|^2\Big)^{1/2}.
$$
Indeed,
\begin{align*}
|f(\phi)| & = \Bigg| f(\phi)- \int_{\R^{2\ell-1}}f(\phi') \bar\mu_{\ell,m}(d\phi')
\Bigg| \\
& \le  \int_{\R^{2\ell-1}} |  f(\phi)-f(\phi') | \bar\mu_{\ell,m}(d\phi')  \\
& \le \|D_\phi f\|_{L^\infty} \int_{\R^{2\ell-1}} | \phi-\phi'| \bar\mu_{\ell,m}(d\phi'),
\end{align*}
where $| \phi-\phi'| = \big(\sum_{x=-\ell+1}^{\ell-1} | \phi(x)-\phi'(x)|^2\big)^{1/2}$.
Therefore,
\begin{align} \label{eq:1.28-A}
\|f\|_{L^{p'}(\bar\mu_{\ell,m})} & \le \|D_\phi f\|_{L^\infty}
\Big\{ \int_{\R^{2\ell-1}} \bar\mu_{\ell,m}(d\phi) \Big(\int_{\R^{2\ell-1}} | \phi-\phi'| 
\bar\mu_{\ell,m}(d\phi') \Big)^{p'} \Big\}^{1/p'} \\
& \le \|D_\phi f\|_{L^\infty}
\Big\{ \int_{\R^{2\ell-1}} \int_{\R^{2\ell-1}} | \phi-\phi'|^{p'} \bar\mu_{\ell,m}(d\phi)
\bar\mu_{\ell,m}(d\phi') \Big\}^{1/p'}   \notag\\
&\le C_{p'}\ell \|D_\phi f\|_{L^\infty}.  \notag
\end{align}

For the last inequality in \eqref{eq:1.28-A},
note that $C_{p'}$ does not depend on $m$, since one can first reduce to the
$0$-boundary measures $\bar\mu_{\ell,0}(d\phi) \bar\mu_{\ell,0}(d\phi')$
by simple shifts $\tilde \phi(x) := \phi(x)-m(x+\ell)$ and
$\tilde \phi'(x) := \phi'(x)-m(x+\ell)$, under which $\tilde\phi(x)-\tilde\phi'(x)
= \phi(x)-\phi'(x)$ holds.  Then, to show the last inequality in 
\eqref{eq:1.28-A}, note also
\begin{align*}
| \phi-\phi'|^{p'} &\le c_{p'}(| \phi -\bar\phi|^{p'} + | \phi'-\bar\phi|^{p'})
\intertext{for arbitrary $\bar\phi\in \R^{2\ell-1}$ and}
| \phi-\bar\phi|^{p'} & = \Big(\sum_{x=-\ell+1}^{\ell-1} |\phi(x)-\bar\phi(x)|^2\Big)^{p'/2}
\le \Big(\sum_{x=-\ell+1}^{\ell-1} 1 \Big)^{p'/2q'} \Big(\sum_{x=-\ell+1}^{\ell-1} |\phi(x)-\bar\phi(x)|^{p'}\Big)\\
& = (2\ell-1)^{p'/2q'} \sum_{x=-\ell+1}^{\ell-1} |\phi(x)-\bar\phi(x)|^{p'},
\end{align*}
where $2/{p'}+1/{q'}=1$ noting $p'>2$. 
Therefore, by the uniform bound \eqref{eq:uniform-phi} we assumed, we obtain
\begin{align*}
& \Big\{ \int_{\R^{2\ell-1}} \int_{\R^{2\ell-1}} | \phi-\phi'|^{p'} \bar\mu_{\ell,0}(d\phi)
\bar\mu_{\ell,0}(d\phi')\Big\}^{1/p'} \\
& \quad \le
\Big\{ 2c_{p'}
\inf_{\bar\phi\in \R^{2\ell-1}} \int_{\R^{2\ell-1}} | \phi-\bar\phi|^{p'} \bar\mu_{\ell,0}(d\phi) \Big\}^{1/p'} \\
& \quad \le C(\ell^{p'/2q'}\cdot \ell \cdot \ell^{p'/2}) ^{1/p'} = C \ell.
\end{align*}
This shows the last inequality in \eqref{eq:1.28-A} and, therefore, \eqref{eq:1.P2}
for some $C_{p'}>0$, uniform in $\ell$ and $m$.

Now we interpolate the two estimates \eqref{eq:1.P1} and \eqref{eq:1.P2}.
Let $T$ be an integral operator $D_\phi f \mapsto f$.
More precisely, let $L_*^p \equiv (L^p(\bar\mu_{\ell,m}))^{2\ell-1}_*$
be the class of all $u=(u_x(\phi))_{x\in [-\ell+1,\ell-1]}
\in  (L^p(\bar\mu_{\ell,m}))^{2\ell-1}$, which satisfy the consistency condition
$\partial_{\phi(x)}u_y = \partial_{\phi(y)}u_x$ in the distributions' sense for 
any $x,y \in [-\ell+1,\ell-1]$.  Then, recalling \eqref{eq:4.Y} for the state 
space $\mathcal{Y}_{\ell,m}$ of the $\phi$-variables,  at least for $u\in L_*^p \cap 
\big( C_b^\infty (\mathcal{Y}_{\ell,m})\big)^{2\ell-1}$,
since $u=\sum_{x=-\ell+1}^{\ell-1} u_x(\phi)d\phi(x)$ is a closed $1$-form (i.e.\ $du
= \sum_{x,y} \partial_{\phi(y)}u_x(\phi)\, d\phi(y) \wedge d\phi(x)=0$), by Poincar\'e's lemma,
it is an exact differential form written as $u=dg$ with a function $g(\phi) \equiv g(\phi;u) \in 
L^q(\bar\mu_{\ell,m}) \cap C_b^\infty (\mathcal{Y}_{\ell,m})$ defined by
\begin{align*}
g(\phi) := \int_0^1 \sum_{x=-\ell+1}^{\ell-1} u_x(\phi_\la)\partial_\la \phi_\la(x) d\la,
\quad \phi \in \mathcal{Y}_{\ell,m}= \R^{2\ell-1},
\end{align*}
where $\{\phi_\la\}_{\la\in [0,1]}$ is a smooth path in $\mathcal{Y}_{\ell,m}$ connecting
$\phi$ and a fixed configuration $\bar\phi = (\bar\phi(x)=mx+m\ell)_{x\in [-\ell+1,\ell-1]}
\in \mathcal{Y}_{\ell,m}$, in particular, $\phi_1=\phi$ and $\phi_0=\bar\phi$.  
Poincar\'e's lemma guarantees that $g(\phi)$ is determined independently of the choice 
of the path $\{\phi_\la\}$.  Then, set
\begin{align*}
f(\phi) \equiv f(\phi;u) := g(\phi) - \int_{\R^{2\ell-1}} g(\phi') \bar\mu_{\ell,m}(d\phi')
\end{align*}
so that $f$ satisfies
\begin{align*}
\int_{\R^{2\ell-1}} f(\phi) \bar\mu_{\ell,m}(d\phi)=0.
\end{align*}
This determines a linear map:
\begin{align*}
T: u \in L_*^p \cap \big( C_b^\infty (\mathcal{Y}_{\ell,m})\big)^{2\ell-1}\mapsto f 
\in L^q(\bar\mu_{\ell,m}) \cap C_b^\infty (\mathcal{Y}_{\ell,m})
\end{align*}
and $D_\phi f=u$ holds, i.e.\ $\partial_{\phi(x)}f(\phi) = u_x(\phi)$ holds for every
$-\ell+1 \le x \le \ell-1$.

By \eqref{eq:1.P1} and \eqref{eq:1.P2}, the map $T$ is continuously extended to
$L^2_*$ and $L^{\infty}_*$, which satisfies for every $p_1>2$
\begin{align*}
& \| Tu\|_{L^2(\bar\mu_{\ell,m})} \le C \ell \| u\|_{L_*^2}, \\
& \| Tu\|_{L^{p_1}(\bar\mu_{\ell,m})} \le C_{p_1} \ell \| u\|_{L_*^\infty}.
\end{align*}
Thus, by Riesz-Thorin interpolation theorem (see \cite{BeL}), for $p,q$
determined from $\th \in (0,1)$ such that
\begin{align*}
\frac1p=\frac{1-\th}2, \quad \frac1q= \frac{1-\th}2 + \frac{\th}{p_1},
\end{align*}
we obtain
\begin{align*}
\| Tu\|_{L^{q}(\bar\mu_{\ell,m})} \le C_{p,q} \ell \| u\|_{L_*^p},
\end{align*}
where $C_{p,q}= C^{1-\th}C_{p_1}^\th$.  Note that $\th = 1-2/p \in (0,1)$ implies
$p\in (2,\infty)$ and $1/p<1/q$ implies $q<p$.  Since
$p_1 >2$ can be taken arbitrarily large, one can choose any $q \,(<p)$ close to $p$.
This concludes the proof of (1) in the lemma.

(2) Under $\bar\mu_{\ell,0}$, if $V$ 
satisfies \eqref{eq:4.VSconv} (i.e.\ strict convexity), one can apply
Brascamp-Lieb inequality (cf.\ \cite{F05}, \cite{FT07} Theorem 3.1) recalling 
that \eqref{eq:uniform-phi} holds for pinned Brownian motion (with discrete time parameter $x$) taking $\bar\phi\equiv 0$.  This shows (2) in the lemma;
take $\bar\phi(x)= E^{\bar\mu_{\ell,0}}[\phi(x)]$.
\end{proof}

\begin{rem}  \label{rem:4.2}
{\rm (1)} If one can show \eqref{eq:1.P2} and \eqref{eq:1.28-A} with $p'=\infty$,
then \eqref{eq:4.WP} holds with $p=q$.  \\
{\rm (2)} Concerning the uniform bound \eqref{eq:uniform-phi}, choose
$\bar\phi(x) = E^{\bar\mu_{\ell,0}}[\phi(x)]$ and recall that
$\phi(x)= \sum_{y=-\ell+1}^x \eta(y)$.  If we consider under the measure with
$\phi(-\ell)=0$ and free for $\phi(\ell)$ instead of  $\bar\mu_{\ell,0}(d\phi)$,
then $\{\eta(y)\}$ is an i.i.d.\ sequence.  Thus, by the CLT,
$\phi(x)- \bar\phi(x)$  with $\bar\phi(x) = E[\phi(x)]$ behaves as 
$\sqrt{\ell} \times N(0,1)$ and 
we obtain  \eqref{eq:uniform-phi}; see \cite{F05} Remarks 8.1, 4.5 and \cite{GPV}.

We believe that \eqref{eq:uniform-phi} can be shown for much more general 
class of $V$. In fact, the estimate \eqref{eq:uniform-phi} with $p'=4$ follows 
from the estimate on $\th_n^*(t)$ taking $s=0$, so that $\th^*(s)=0$ and $q=0$ 
in the middle of p.429 of \cite{DH}, assuming only the existence of
exponential moments of $\hat \nu_0$.  We may extend this to every $p'>2$.
\end{rem}

\subsection{$L^p$ variational formula}  \label{sec:VF}

Since the operator $(-L_0)^{-1/2}$ in \eqref{eq:1.28} or \eqref{eq:1.V-B}
below is non-local, to localize the computation from $N$ to $\ell$, we show
the following $L^p$-$L^q$ variational formula.  We will use the upper bound;
see \eqref{eq:1.32} below.
In the following lemma, in general, $\mu$ is a probability measure on $\R^n$
(later, we will take $\mu=\mu_{N,m}, \R^n = \mathcal{X}_{N,m}$ with $n=N-1$ 
and others), $L_0$ is symmetric on $L^2(\mu)$ and has a spectral gap, i.e.,
there exists $c>0$ such that $E^\mu[F\cdot (-L_0)F] \ge c \| F\|_{L^2(\mu)}^2$
for every $F\in L^2(\mu)$ satisfying $E^\mu[F]=0$.
We use this property only for the existence of $(-L_0)^{-1/2}f$, and 
the estimate in the following lemma does not depend on $c$. 

\begin{lem} \label{lem:1.11}
Let $p, q>1$ satisfy $1/p+1/q=1$.  Then, 
for $f$ satisfying $E^\mu[f]=0$, we have
$$
\sup_{\fa\not\equiv \rm{const}} \frac{\int f\fa d\mu}{\|(-L_0)^{1/2}\fa\|_{L^q(\mu)}}
\le \|(-L_0)^{-1/2}f\|_{L^p(\mu)} 
\le 2 \sup_{\fa\not\equiv \rm{const}} \frac{\int f\fa d\mu}{\|(-L_0)^{1/2}\fa\|_{L^q(\mu)}}.
$$
\end{lem}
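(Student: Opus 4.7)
The plan is to prove both bounds by exploiting the symmetry of $(-L_0)^{1/2}$ on $L^2(\mu)$ together with H\"older's inequality and $L^p$--$L^q$ duality. Set $g:=(-L_0)^{-1/2}f$; this is well defined on the mean-zero subspace thanks to the spectral gap assumption, and $E^\mu[g]=0$ since constants lie in the kernel of $L_0$ (so also of its fractional powers).

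For the lower bound, I would note that by symmetry of $(-L_0)^{1/2}$,
\begin{equation*}
\int f\varphi \, d\mu = \int \big((-L_0)^{-1/2}f\big)\cdot \big((-L_0)^{1/2}\varphi\big)\, d\mu,
\end{equation*}
for any smooth $\varphi$ not identically constant (adding a constant to $\varphi$ does not affect either side, using $E^\mu[f]=0$). Applying H\"older's inequality with conjugate exponents $p,q$ yields
\begin{equation*}
\int f\varphi\, d\mu \le \|(-L_0)^{-1/2}f\|_{L^p(\mu)}\,\|(-L_0)^{1/2}\varphi\|_{L^q(\mu)},
\end{equation*}
and dividing by the $L^q$-norm and taking the supremum over $\varphi$ gives the left inequality.

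For the upper bound, I would use the standard $L^p$-duality characterisation
\begin{equation*}
\|g\|_{L^p(\mu)} = \sup_{\|\psi\|_{L^q(\mu)}\le 1}\int g\psi\, d\mu.
\end{equation*}
Given such a $\psi$, replace it by its centred version $\psi_0 := \psi - E^\mu[\psi]$; this does not change $\int g\psi\, d\mu$ since $E^\mu[g]=0$. Now $\psi_0$ has mean zero, so $\varphi := (-L_0)^{-1/2}\psi_0$ is well defined, $(-L_0)^{1/2}\varphi=\psi_0$, and symmetry gives $\int g\psi\, d\mu=\int g\psi_0\, d\mu = \int f\varphi\, d\mu$. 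Thus
\begin{equation*}
\int g\psi\, d\mu \le \Big(\sup_{\varphi}\tfrac{\int f\varphi\, d\mu}{\|(-L_0)^{1/2}\varphi\|_{L^q(\mu)}}\Big)\cdot \|\psi_0\|_{L^q(\mu)}.
\end{equation*}

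The factor of $2$ appears because we must bound $\|\psi_0\|_{L^q(\mu)}$ in terms of $\|\psi\|_{L^q(\mu)}$: since $\mu$ is a probability measure, $|E^\mu[\psi]|\le \|\psi\|_{L^1(\mu)}\le \|\psi\|_{L^q(\mu)}$, whence by the triangle inequality $\|\psi_0\|_{L^q(\mu)}\le 2\|\psi\|_{L^q(\mu)}\le 2$. Taking the supremum over $\|\psi\|_{L^q(\mu)}\le 1$ then delivers the claimed upper bound. The only subtlety to be careful about is that $\psi$ in the duality need not have mean zero, which is exactly what produces the constant $2$; everything else is a direct consequence of self-adjointness of $(-L_0)^{1/2}$ and H\"older duality, so no real obstacle arises beyond keeping track of the mean-zero projection.
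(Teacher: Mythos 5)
Your proof is correct, and its skeleton coincides with the paper's: the lower bound is obtained exactly as in the paper (symmetry of $(-L_0)^{1/2}$ plus H\"older), and the upper bound likewise goes through the $L^p$--$L^q$ duality $\|(-L_0)^{-1/2}f\|_{L^p(\mu)}=\sup_{\|\psi\|_{L^q}\le 1}\int (-L_0)^{-1/2}f\,\psi\, d\mu$ with a centering of the test function. Where you genuinely diverge is in how the factor $2$ is produced. The paper reduces it to the constant $\kappa:=\inf_{G_0,c}\|G_0+c\|_{L^q}/\|G_0\|_{L^q}$ over mean-zero $G_0$ and proves $\kappa\ge 1/2$ by a minimizing-sequence argument: boundedness of the constants $c_n$, weak compactness of the unit ball of $L^q(\mu)$, weak lower semicontinuity of the norm, and the bound $|c|\le\kappa$. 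Your argument replaces all of this by the one-line observation that, $\mu$ being a probability measure, $|E^\mu[\psi]|\le\|\psi\|_{L^1(\mu)}\le\|\psi\|_{L^q(\mu)}$, so the centered function satisfies $\|\psi-E^\mu[\psi]\|_{L^q}\le 2\|\psi\|_{L^q}$ --- which is precisely the statement $\kappa\ge 1/2$ written for $G_0=\psi-E^\mu[\psi]$, $c=E^\mu[\psi]$. So your route is more elementary, avoids any compactness, and yields the same constant; the paper's compactness argument buys nothing extra here. Two small points you share with the paper and should keep in mind: in the duality step one should restrict to test functions $\psi$ in $L^q\cap L^2$ (e.g.\ bounded $\psi$, which still saturate the $L^p$ norm) so that $(-L_0)^{-1/2}\psi_0$ is defined via the spectral theorem on the mean-zero subspace of $L^2(\mu)$; and the identification of such $\psi_0$ with functions of the form $(-L_0)^{1/2}\fa$ is exactly the implicit step the paper also takes when it substitutes $G_0=(-L_0)^{1/2}\fa$, so your level of rigor matches the original.
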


\begin{proof}
For simplicity, we write the norm $\|\cdot\|_{L^p(\mu)}$ as $\|\cdot\|_{L^p}$.
First, noting that $f= (-L_0)^{1/2}(-L_0)^{-1/2}f$ and that $(-L_0)^{-1/2}f$ exists
due to the spectral gap of $L_0$ and $E^\mu[f]=0$,
by the symmetry of $L_0$ and H\"older's inequality, we have
\begin{align*} 
\int f\fa d\mu & = \int (-L_0)^{-1/2}f (-L_0)^{1/2}\fa d\mu \\
& \le \|(-L_0)^{-1/2}f\|_{L^p} \| (-L_0)^{1/2}\fa\|_{L^q},
\end{align*}
for every $\fa$, which implies the first inequality (i.e.\ the lower bound).

To show  the second inequality (i.e.\ the upper bound), 
set $F= (-L_0)^{-1/2}f$.  Since $L^p(\mu)=(L^q(\mu))^*$, we have
$$
\|F\|_{L^p} =  \sup_{G\in L^q(\mu)} \frac{\int FG d\mu}{\|G\|_{L^q}},
$$
omitting to write $G\not=0$.  Note that we can replace $\int FG d\mu$ by
$|\int FG d\mu|$, by considering $\pm G$.  However, for 
$F\in L_0^p=\{F\in L^p(\mu); E^\mu[F]=0\}$ and decomposing $G=G_0+c$ with 
$G_0=G-c\in L_0^q$, $c= E^\mu[G]$, we have
\begin{align*} 
\|F\|_{L^p} & =\sup_{G\in L^q(\mu)} \frac{|\int FG d\mu|}{\|G\|_{L^q}}
= \sup_{G_0\in L_0^q(\mu)} \sup_{c\in \R} \frac{|\int FG_0 d\mu|}{\|G_0+c\|_{L^q}}  \\
&= \sup_{G_0\in L_0^q(\mu)} \frac{|\int FG_0 d\mu|}{\inf_{c\in \R}\|G_0+c\|_{L^q}} \\
& \le \frac1\k \sup_{G_0\in L_0^q(\mu)}  \frac{|\int FG_0 d\mu|}{\|G_0\|_{L^q}}
\end{align*}
since $E^\mu[F]=0$, where
\begin{align} \label{eq:1.kappa}
\k := \inf_{G_0\in L_0^q(\mu)} \inf_{c\in \R} \frac{ \|G_0+c\|_{L^q}}{ \|G_0\|_{L^q}}
\ge 0.
\end{align} 
Showing that $\k\ge 1/2$ completes the proof of the second inequality by taking 
$G_0= (-L_0)^{1/2}\fa$.

Finally, we show that $\k\ge 1/2$ for $\k$ in \eqref{eq:1.kappa}.  First note that
\begin{align*}
\k = \inf_{G\in L_0^q(\mu): \|G\|_{L^q}=1 } \inf_{c\in \R} \|G+c\|_{L^q},
\end{align*} 
by denoting $G_0/\|G_0\|_{L^q}$ by $G$.  Then, one can find a sequence 
$\{G_n\in L_0^q(\mu), c_n\in \R\}_n$ such that $\|G_n\|_{L^q}=1$
and $\|G_n+c_n\|_{L^q} \to \k$ as $n\to\infty$.  Note that $\{c_n\}$ is 
bounded, since $|c_n| \le \|G_n+c_n\|_{L^q} + \|G_n\|_{L^q}\to \k+1$ as
$n\to\infty$, so one can find its subsequence $c_{n'}$ and $c\in \R$
such that $c_{n'}\to c$ as $n'\to\infty$.  Since $\|G_{n'}\|_{L^q}=1$,
$\{G_{n'}\}$ is weakly compact in $L^q(\mu)$.  Thus, one can find 
its further subsequence
$\{G_{n''}\}$ such that $G_{n''}$ converges weakly to some $G\in L^q(\mu)$ as $n''\to\infty$.
However, by the weak convergence, $\lan G_{n''},1\ran \to \lan G,1\ran
=E^\mu[G]$ and this shows that
$G\in L_0^q(\mu)$.  The weak convergence also implies that
\begin{align*}
\|G+c\|_{L^q} \le \liminf_{n''\to\infty} \|G_{n''}+c\|_{L^q} 
 \le \liminf_{n''\to\infty} \big\{ \|G_{n''}+c_n''\|_{L^q} +|c-c_n''|\big\} =\k.
\end{align*} 
This together with $G\in L_0^q(\mu)$ shows 
$$
|c| = \Big|\int (G+c)d\mu\Big| \le \k.
$$
Therefore, taking the $\liminf$ as $n''\to 0$ in both sides of
$$
1= \|G_{n''}\|_{L^q} \le \|G_{n''} +c\|_{L^q} +|c|,
$$
we obtain $1\le 2\k$ for $\k$ in \eqref{eq:1.kappa}.
\end{proof}

\subsection{Equivalence of ensembles in the $L^p$ sense}

We prepare an $L^p$ version of the equivalence of ensembles.  We assume 
the existence of exponential moments: $E^{\mu_{u_0}}[e^{\la \eta(0)}]<\infty$ for
every $\la\in \R$.  This holds under the strict-convexity assumption 
\eqref{eq:4.VSconv} for $V$.

\begin{prop} (cf.\ Theorem 5.1 of \cite{BFS})\label{thm:EE}
Let $p\ge 4$ 
and let $f$ be a local $L^{p'}(\mu_{u_0})$ function on $\mathcal{X}_N = \R^N$
for some $p'>p$, depending only on $\eta_{\La_{\ell_0}} = 
(\eta(x))_{x\in \La_{\ell_0}}$, such that 
$\tilde f(u_0) = E^{\mu_{u_0}}[f]=0$ and $\tilde f'(u_0)=0$.  Then,
there exists a constant $C= C(u_0,\ell_0)$ (which is uniform in $u_0: |u_0|\le K$
for any $K>0$)
such that for $\ell\geq\ell_0$ we have
\begin{align}  \label{eq:4.EE-A}
\Big\| E^{\mu_{u_0}}\big[f| \eta^{(\ell)}\big] 
 - \frac12\Big\{(\eta^{(\ell)}-u_0)^2- \frac{{\rm var} (u_0) }{2\ell +1}\Big\} 
\tilde f''(u_0)
\Big\|_{L^p(\mu_{u_0})} 
\leq \ \frac{C\|f\|_{L^{p'}(\mu_{u_0})}}{\ell^{3/2}},
\end{align}
where ${\rm var} (u_0)$ is defined below \eqref{eq:3.2-A}.
On the other hand, when only $\tilde f(u_0)=0$ is known, we have
\begin{align}  \label{eq:4.EE-B}
&\Big\| E^{\mu_{u_0}}[f| \eta^{(\ell)}]  - (\eta^{(\ell)} - u_0)\tilde f'(u_0)  
\Big\|_{L^p(\mu_{u_0})} \ \leq \ \frac{C\|f\|_{L^{p'}(\mu_{u_0})}}{\ell}.
\end{align}
\end{prop}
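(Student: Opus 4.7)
The approach follows the classical Edgeworth-expansion route to equivalence of ensembles, refined to yield $L^p$ control with the sharp $\|f\|_{L^{p'}(\mu_{u_0})}$ dependence. Because $f$ depends only on $\eta_{\La_{\ell_0}}$ with $\ell_0\le\ell$ and the canonical measure \eqref{eq:2.13} is independent of $u$, conditioning reduces to
\begin{align*}
E^{\mu_{u_0}}\big[f\mid \eta^{(\ell)}=v\big] = E^{\mu_{\ell,v}}[f] =: F_\ell(v),
\end{align*}
so the task is a pointwise expansion of $F_\ell(v)$ in $(v-u_0)$ and $1/\ell$, followed by $L^p$ integration against the law of $\eta^{(\ell)}$ under $\mu_{u_0}$.

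The main analytic step I would establish is
\begin{align*}
F_\ell(v) = \tilde f(v) - \frac{{\rm var}(v)}{2(2\ell+1)}\tilde f''(v) + R_1(v,\ell),\qquad |R_1(v,\ell)|\le C\ell^{-3/2}\|f\|_{L^{p'}(\mu_{u_0})},
\end{align*}
uniformly for $v$ in a compact neighborhood of $u_0$. This comes from the explicit Radon-Nikodym density of $\mu_{\ell,v}|_{\La_{\ell_0}}$ with respect to $\nu_v^{\otimes\La_{\ell_0}}$, which is a ratio of densities of sums of the remaining i.i.d.\ $\nu_v$-variables, combined with a local CLT with Edgeworth correction; the strict convexity \eqref{eq:4.VSconv} supplies exponential moments, smoothness of the density of $\nu_v$, and Cram\'er-type non-concentration of its characteristic function, while H\"older extracts $\|f\|_{L^{p'}}$ from the $L^{p'/(p'-1)}$-norm of the density correction. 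Combined with the Taylor expansions (using $\tilde f(u_0)=\tilde f'(u_0)=0$)
\begin{align*}
\tilde f(v) &= \tfrac12\tilde f''(u_0)(v-u_0)^2 + R_2(v),& |R_2(v)|&\le C\|f\|_{L^{p'}(\mu_{u_0})}|v-u_0|^3,\\
{\rm var}(v)\tilde f''(v) &= {\rm var}(u_0)\tilde f''(u_0) + R_3(v),& |R_3(v)|&\le C\|f\|_{L^{p'}(\mu_{u_0})}|v-u_0|,
\end{align*}
in which the derivatives of $\tilde f$ are bounded by $\|f\|_{L^{p'}}$ via differentiation under the integral in the chemical potential $\la(u)$ and H\"older, substitution presents the left-hand side of \eqref{eq:4.EE-A} as $R_1(\eta^{(\ell)},\ell)+R_2(\eta^{(\ell)})-R_3(\eta^{(\ell)})/(2(2\ell+1))$.

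To finish, I would take $L^p(\mu_{u_0})$ norms and use the Marcinkiewicz-Zygmund inequality (with exponential moments of $\nu_{u_0}$) to get $\|(\eta^{(\ell)}-u_0)^k\|_{L^p(\mu_{u_0})}\le C_k\ell^{-k/2}$ for every $k\ge 1$. Then $R_1$ contributes $O(\ell^{-3/2})$ directly, $R_2$ contributes $O(\ell^{-3/2})$ from the cubic power, and $R_3/\ell$ contributes $O(\ell^{-1})\cdot O(\ell^{-1/2})=O(\ell^{-3/2})$, proving \eqref{eq:4.EE-A}. The weaker bound \eqref{eq:4.EE-B} follows by the same procedure with $\tilde f$ Taylor-expanded only to first order: both the Edgeworth correction $O(1/\ell)$ and the quadratic Taylor remainder are $O(1/\ell)$ in $L^p$. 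The main technical obstacle is establishing the Edgeworth expansion with error controlled by $\|f\|_{L^{p'}}$ rather than $\|f\|_\infty$ --- this precludes a direct appeal to the classical pointwise local CLT results and forces one to bound density corrections in $L^{p'/(p'-1)}$ via characteristic-function estimates uniform in $v$, together with a careful choice of truncation scale in the Edgeworth remainder so that the constant $C$ remains uniform in $u_0$ over compacts.
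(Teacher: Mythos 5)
Your proposal follows essentially the same route as the paper's (sketched) proof: the paper also reduces to $E^{\mu_{\ell,v}}[f]$, invokes a local CLT/Edgeworth-type expansion to get the $\ell^{-3/2}$ (resp.\ $\ell^{-1}$) correction, Taylor-expands $\tilde f$ at $u_0$ using $\tilde f(u_0)=\tilde f'(u_0)=0$, and controls powers of $\eta^{(\ell)}-u_0$ by moment bounds for sums of i.i.d.\ variables. The paper gives no more detail than you do, deferring to Theorem 5.1 of \cite{BFS} and Proposition 5.1 of \cite{GJS}.

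One step is under-specified in your write-up and is worth making explicit, because it is exactly where the $L^{p'}$ norm (as opposed to $\|f\|_{L^2}$ or $\|f\|_\infty$) enters in the paper's account. All of your bounds on $R_1$, $R_2$, $R_3$ --- and the bound $|\tilde f'''(v)|\le C\|f\|_{L^{p'}(\mu_{u_0})}$ obtained by differentiating in the chemical potential --- hold only for $v$ in a compact neighborhood of $u_0$, since the Radon--Nikodym derivative of $\mu_v$ with respect to $\mu_{u_0}$ on $\La_{\ell_0}$ degenerates as $|v|\to\infty$. When you then take the $L^p(\mu_{u_0})$ norm over $\eta^{(\ell)}$, you must split into $\{|\eta^{(\ell)}-u_0|\le\de\}$, where your expansion applies, and $\{|\eta^{(\ell)}-u_0|>\de\}$, where it does not. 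On the latter event the paper applies H\"older's inequality to separate the indicator from $f$ and then Cram\'er's large deviation bound $\mu_{u_0}(|\eta^{(\ell)}-u_0|>\de)\le e^{-c\ell}$, producing an error of order $\|f\|_{L^{p'}(\mu_{u_0})}e^{-C\ell}$, which is negligible against $\ell^{-3/2}$. This is the only place the hypothesis $p'>p$ is genuinely used; your closing remark about ``truncation scale'' gestures at it but the tail estimate itself should be carried out. With that addition your argument is complete and coincides with the paper's.
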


The proof of Proposition \ref{thm:EE} is similar to the proof of Theorem 5.1 of 
\cite{BFS} or Proposition 5.1 of \cite{GJS}, though these treat the case 
of discrete variables.  The orders of the errors are roughly explained
as follows.  By the CLT for sum of i.i.d.\ under $\mu_{u_0}$,
$\eta^{(\ell)}-u_0 = \frac1{2\ell} \sum_{x=-\ell}^{\ell-1} (\eta(x)-u_0)
 =O(\ell^{-1/2})$.
The first estimate \eqref{eq:4.EE-A} cares approximation up to 
second-order terms so that the error is expected to be
$O(\ell^{-3/2})$.  The second estimate 
\eqref{eq:4.EE-B} cares up to
first-order term so that the error is expected to be $O(\ell^{-1})$.

More precisely, since we condition on $\eta^{(\ell)}$, on the set 
$1(\|y\|\le \de)$ for $y=\eta^{(\ell)}-u_0$, by applying the local CLT,
the error is estimated by $C\|f\|_{L^2(\mu_{u_0})} \ell^{-3/2}$; see the conclusion
of Step 7 in Section 5.2 of \cite{BFS} (note that $\|f\|_{L^2}$ is enough here, 
we take the power $1/4$ of both sides).  On the other hand,
on the set $1(\|y\|>\de)$, we apply H\"older's inequality (to separate 
$1(\|y\|>\de)$ and functions related to $f$).  Then, we apply classical 
Cram\'er's theorem for the large deviation estimate on $\mu_{u_0}(\| y\|>\de)$
for sum of i.i.d.\ random variables.  The error is estimated by 
$\|f\|_{L^{p'}(\mu_{u_0})} e^{-C\ell}$; see
Step 8 in Section 5.2 of \cite{BFS}.

The details are omitted.  For Proposition \ref{thm:EE}, $p\ge 2$ might be 
sufficient instead of $p\ge 4$.  Then, the same holds for Theorem 
\ref{BG-2-B}, Lemmas \ref{globaltwo-blocks} and \ref{EE_1block}.

\section{Proof of Theorem \ref{BG-2-B}}
\label{sec:5}

In this section, we always assume the condition \eqref{eq:Assump} given
in Theorem \ref{BG-2-B}.  This covers \eqref{eq:2.V}, \eqref{eq:4.Vconv},
\eqref{eq:4.VSconv} and Caputo's condition for (SG). 

As outlined in Section \ref{sec:3.1}, by Corollary \ref{cor:1.7} 
(It\^o-Tanaka trick) and the Littlewood-Paley-Stein inequality \eqref{eq:LPS-2},
and then estimating as in \eqref{eq:1.28}, we obtain
\begin{align} \label{eq:1.V-B}
E_{\mu_{N,m}}&\Big[ \sup_{t\in [0,T]} \Big| \int_0^t V(s,\eta_s^N) ds\Big|^p \Big]\\
& \le C_p N^{-p} T^{(p-2)/2} \int_0^T 
  \big\| (-L_0)^{-1/2}V(t,\cdot) \big\|_{L^p(\mu_{N,m})}^p dt,
  \notag
\end{align}
for $V$ satisfying $E^{\mu_{N,m}}[V(t,\cdot)]=0$ for every $t\in [0,T]$.
Note that $C_p$ is uniform in $N$, $m\in \R$ and $\ga\in \R$.

To estimate the right-hand side of \eqref{eq:1.V-B}, we recall that
$E^{\mu_{N,m}}[V(t,\cdot)]=0$ and use Lemma \ref{lem:1.11} to obtain
\begin{align}  \label{eq:1.32}
\|(-L_0)^{-1/2}V\|_{L^p(\mu_{N,m})} \le 2 \sup_{\fa\not\equiv{\rm const}}
 \frac{E^{\mu_{N,m}}[V\fa]}{\|(-L_0)^{1/2}\fa\|_{L^q(\mu_{N,m})}},
\end{align}
where $1/p+1/q=1$.

With this, we can localize the argument.  Recall $\ell\in \N, \ell<N/2,
\La_{\ell}=[-\ell,\ell-1]$ (as before, we omit $\lq\lq\cap\Z$'') and 
$\mathcal{X}_{\ell,m} = \{\eta\in \R^{2\ell}; \eta^{(\ell)}=m\}
\cong \R^{2\ell-1}$, $\mathcal{X}_{N,m} = \{\eta\in \R^{N}; \eta^{(N)}=m\}$, 
$\mathcal{X}_N=\R^N$.  The next proposition is used in the proof of Lemma
\ref{lem:1.22} taking $r$ to be the sum of shifts of $f-E^{\mu_{\ell,\eta^{(\ell)}}}[f]$.
Namely, it replaces $f$ with $E^{\mu_{\ell,\eta^{(\ell)}}}[f]$ (recall
\eqref{mu-ellm} for $\mu_{\ell,m}$) under the large 
space-time sum and integral.

\begin{prop} (cf. Proposition 5.2 of \cite{BFS})   \label{prop:1.21}
Let $r=r(\eta)$ be a local $L^p(\mu_{N,m})$ function on $\mathcal{X}_{N,m}$
depending only on $\eta_{\La_{\ell}} = (\eta(x))_{x\in \La_{\ell}}$.  Suppose that
$E^{\mu_{N,m}}[r|\eta^{(\ell)}] \; (=E^{\mu_{\ell,\eta^{(\ell)}}}[r])
=0$, $\mu_{N,m}$-a.s.\ $\eta$.  Then, for $p, q>1$ such that
$1/p+1/q=1$, $p'>p$ and $\fa=\fa(\eta) \in C^1(\R^N)$, we have 
$$
|E^{\mu_{N,m}}[r\fa] |\le C\ell \|r\|_{L^{p'}(\mu_{N,m})} 
\|\nabla_{\La_\ell}^* D_\eta \fa\|_{L^q(\mu_{N,m})},
$$
for some $C=C_{p,p'}>0$ uniform in $N$, $\ell$ and $m$,
where $\|\nabla_{\La_\ell}^* D_\eta \fa\|_{L^q(\mu_{N,m})}$ denotes
the $L^q(\mu_{N,m})$-norm of the Euclidean norm of 
$\nabla_{\La_\ell}^* D_\eta \fa \in \R^{2\ell-1}$ defined as in
\eqref{eq:4.15-A} and \eqref{eq:4.15-B}.
\end{prop}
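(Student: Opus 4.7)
The plan is to localize $E^{\mu_{N,m}}[r\fa]$ to the block $\La_\ell$ by conditioning, center $\fa$ using the mean-zero hypothesis on $r$, and close the estimate with a H\"older--weak Poincar\'e pair. Concretely, since $\mu_{N,m}=\nu_u^{\otimes\T_N}(\cdot\,|\,\eta^{(N)}=m)$ arises from an i.i.d.\ measure conditioned on the total sum, the conditional distribution of $\eta_{\La_\ell}$ given $\eta_{\La_\ell^c}$ is precisely the canonical measure $\mu_{\ell,\eta^{(\ell)}}$ from \eqref{mu-ellm} (the outside variables determine $\eta^{(\ell)}$ through the conservation constraint). The tower property therefore gives, with $\bar\fa:=E^{\mu_{\ell,\eta^{(\ell)}}}[\fa]$ and using $E^{\mu_{\ell,\eta^{(\ell)}}}[r]=0$,
$$E^{\mu_{N,m}}[r\fa] = E^{\mu_{N,m}}\Big[E^{\mu_{\ell,\eta^{(\ell)}}}[r(\fa-\bar\fa)]\Big].$$

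Next, I would bound the inner integral. Setting $q':=p'/(p'-1)$, so that $q'<q$ since $p'>p$, H\"older on the conditional measure yields
$$\big|E^{\mu_{\ell,\eta^{(\ell)}}}[r(\fa-\bar\fa)]\big| \le \|r\|_{L^{p'}(\mu_{\ell,\eta^{(\ell)}})}\,\|\fa-\bar\fa\|_{L^{q'}(\mu_{\ell,\eta^{(\ell)}})}.$$
Applying Lemma \ref{lem:1.15} to $\fa-\bar\fa$, transcribed from the $\phi$- to the $\eta$-variables via the identity $D_\phi=\nabla^* D_\eta$ of Remark \ref{rem:4.1-A}, gives, uniformly in $\ell$ and in the random center $\eta^{(\ell)}$,
$$\|\fa-\bar\fa\|_{L^{q'}(\mu_{\ell,\eta^{(\ell)}})} \le C_{p,p'}\,\ell\, \|\nabla_{\La_\ell}^* D_\eta \fa\|_{L^q(\mu_{\ell,\eta^{(\ell)}})},$$
which is the origin of the factor $\ell$ on the right-hand side of the proposition.

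Finally, I take the outer $\mu_{N,m}$-expectation and apply H\"older once more with $(p',q')$:
$$|E^{\mu_{N,m}}[r\fa]| \le C\ell \Big(E^{\mu_{N,m}}\big[\|r\|_{L^{p'}(\mu_{\ell,\eta^{(\ell)}})}^{p'}\big]\Big)^{1/p'} \Big(E^{\mu_{N,m}}\big[\|\nabla_{\La_\ell}^* D_\eta \fa\|_{L^q(\mu_{\ell,\eta^{(\ell)}})}^{q'}\big]\Big)^{1/q'}.$$
The first factor collapses to $\|r\|_{L^{p'}(\mu_{N,m})}$ by the tower property. For the second, Jensen applied to the convex map $x\mapsto x^{q/q'}$ (valid since $q/q'>1$) bounds $E^{\mu_{N,m}}[B^{q'}] \le (E^{\mu_{N,m}}[B^q])^{q'/q}$ for $B=\|\nabla_{\La_\ell}^* D_\eta\fa\|_{L^q(\mu_{\ell,\eta^{(\ell)}})}$, and one more use of the tower property shows $E^{\mu_{N,m}}[B^q]=\|\nabla_{\La_\ell}^* D_\eta\fa\|_{L^q(\mu_{N,m})}^q$. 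Combining the two factors yields the assertion.

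The delicate point will be the weak Poincar\'e step: Lemma \ref{lem:1.15} is stated for $2<q'<q$, which is naturally compatible with the $1<p<2$ regime; to cover the general conjugate pair $(p,q)$ arising in later applications (where $p\ge 4$ forces $q<2$) one should either invoke the $p=q$ improvement flagged in Remark \ref{rem:4.2}(1), interpolate the $L^2$-Poincar\'e coming from (SG) against the $L^\infty$--$L^{p'}$ estimate \eqref{eq:1.P2}, or strengthen the moment bound \eqref{eq:uniform-phi}. Uniformity of the resulting Poincar\'e constant in the random center $\eta^{(\ell)}$ is ensured by the shift reduction to $\bar\mu_{\ell,0}$ employed at the end of the proof of Lemma \ref{lem:1.15}; the $C$ produced this way depends only on $p,p'$, as required.
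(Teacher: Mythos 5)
Your localization, tower-property manipulations, and the outer H\"older/Jensen bookkeeping are all fine, and the idea of centering $\fa$ so as to bypass $(-L_{0,\ell,M})^{\pm 1/2}$ and the Littlewood-Paley-Stein inequality altogether is an attractive simplification. But the proof breaks at exactly the step you flag as ``delicate'', and none of your proposed repairs can close it. Your inner H\"older with the pair $(p',q')$, $q'=p'/(p'-1)<q$, forces you to prove a weak Poincar\'e inequality of the form $\|\fa-\bar\fa\|_{L^{q'}(\mu_{\ell,M})}\le C\ell\,\|\nabla^*_{\La_\ell}D_\eta\fa\|_{L^{q}(\mu_{\ell,M})}$ with \emph{both} exponents at most $2$: in the only regime where the proposition is used ($p\ge 4$ in Lemma \ref{lem:1.22} and beyond), $q=p/(p-1)\le 4/3$. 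Lemma \ref{lem:1.15} is proved only for $2<q<p$, by interpolating the $L^2\!\to\!L^2$ bound from (SG) against the $L^\infty\!\to\!L^{p_1}$ bound \eqref{eq:1.P2}; interpolation between $L^2$ and $L^\infty$ can never produce an estimate with the gradient measured below $L^2$, Remark \ref{rem:4.2}(1) only concerns the endpoint $p=q$ of that same super-$L^2$ range, and strengthening the moment bound \eqref{eq:uniform-phi} does not change the exponent range at all. Swapping the roles in the inner H\"older does not help either, because the proposition (and its use inside the variational formula \eqref{eq:1.32}) requires the gradient of $\fa$ in exactly $L^q$ with $q\le 2$. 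So as written your argument proves the proposition only for $p=2$ (where the (SG) Poincar\'e in $L^2$ suffices), not in the range actually needed.

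The paper's proof is arranged precisely to dodge this obstruction: it writes $r_M=(-L_{0,\ell,M})^{1/2}\big((-L_{0,\ell,M})^{1/2}u_M\big)$ and applies H\"older with the conjugate pair $(p,q)$ to $(-L_{0,\ell,M})^{1/2}u_M$ and $(-L_{0,\ell,M})^{1/2}\fa$. The weak Poincar\'e inequality is then invoked only on the $r$-side, for $\|(-L_{0,\ell,M})^{-1/2}r_M\|_{L^p}$ with $2<p<p'$ (the regime Lemma \ref{lem:1.15} actually covers), followed by the \emph{lower} LPS bound to return to $\|r_M\|_{L^{p'}}$; the $\fa$-side, which lives in $L^q$ with $q<2$, is handled by the \emph{upper} LPS bound \eqref{eq:LPS-D}, valid for all $1<q<\infty$. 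In other words, the Littlewood-Paley-Stein inequality is not an optional convenience here: it is the device that transfers the sub-$L^2$ exponent onto an object for which a bound is available. To salvage your route you would have to establish an $L^q$ Poincar\'e inequality with constant $O(\ell)$, uniform in $\ell$ and $m$, for $q<2$ on $\mu_{\ell,m}$ --- a statement that is plausible but is nowhere proved in the paper and would itself require a Riesz-transform/LPS-type input.
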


\begin{proof}
The variable $\eta\equiv \eta_{\La_{\ell}}\in \R^{2\ell}$ is decomposed 
as a skew product
$(M,\eta) \in \R \times \mathcal{X}_{\ell,M}$  with $M\equiv M(\eta,\ell):=\eta^{(\ell)}$
so a function $r$ on $\R^{2\ell}$  is described as
$$
r(\eta) = r(M,\eta) \equiv r_M(\eta).
$$
Note that for each $M\in \R$, $r_M(\cdot)$ is a function on $\mathcal{X}_{\ell,M}$.

Then, since $E^{\mu_{\ell,M}} [r_M(\cdot)]=0$, there exists a function $u=u_M(\eta)$
on $\mathcal{X}_{\ell,M}$  such that $r_M = -L_{0,\ell,M}u_M$ holds; i.e.\
$u_M = (-L_{0,\ell,M})^{-1} r_M$ which exists by the condition (SG).
We denote $L_{0,\ell}$ by $L_{0,\ell,M}$ to indicate that it is
considered on the space $\mathcal{X}_{\ell,M}$.  Therefore,
\begin{align*}
\big|E^{\mu_{N,m}}[r\fa] \big|
& = \big| E^{\mu_{N,m}}[ E^{\mu_{N,m}}[r\fa|\eta^{(\ell)}] ] \big| 
= \big| E^{\mu_{N,m}}[ E^{\mu_{\ell,M}}[r_M\fa] ] \big|   \\
& = \big| E^{\mu_{N,m}}[ E^{\mu_{\ell,M}}[ (-L_{0,\ell,M}u_M) \fa] ] \big|   \\
& = \big| E^{\mu_{N,m}}[ E^{\mu_{\ell,M}}[ (-L_{0,\ell,M})^{1/2}  u_M \cdot 
(-L_{0,\ell,M})^{1/2}  \fa] ] \big|   \\
& \le E^{\mu_{N,m}} \big[ \|(-L_{0,\ell,M})^{1/2} u_M \|_{L^p(\mu_{\ell,M})}
\|(-L_{0,\ell,M})^{1/2}  \fa\|_{L^q(\mu_{\ell,M})} \big],
\end{align*}
recall that the average under $\mu_{N,m}$ is taken in $\eta$ through 
$M=M(\eta.\ell)$.  Here, by the weak $L^p$ Poincar\'e inequality 
(Lemma \ref{lem:1.15}, note $D_\phi f = \nabla_{\La_{\ell}}^* D_\eta f$ on
$\mathcal{X}_{\ell,M}$ for $f=f(\eta_{\La_{\ell}})$)  
on $L^p(\mu_{\ell,M})$ with $p'>p$ and then 
by the lower bound in the Littlewood-Paley-Stein inequality \eqref{eq:LPS-D}
in Proposition \ref{prop:LPS}, we have
\begin{align*}
\|(-L_{0,\ell,M})^{1/2} u_M \|_{L^p(\mu_{\ell,M})}
& = \|(-L_{0,\ell,M})^{-1/2} r_M \|_{L^p(\mu_{\ell,M})}  \\
& \le C_1 \ell \|\nabla_{\La_{\ell}}^* D_\eta(-L_{0,\ell,M})^{-1/2} r_M \|_{L^{p'}(\mu_{\ell,M})}  \\
& \le C_2 \ell \|(-L_{0,\ell,M})^{1/2}(-L_{0,\ell,M})^{-1/2} r_M\|_{L^{p'}(\mu_{\ell,M})}  \\
& = C_2 \ell \|r_M \|_{L^{p'}(\mu_{\ell,M})},
\end{align*}
for $2<p<p'<\infty$, where $C_1, C_2>0$ are uniform in $\ell$ and $M$.
On the other hand, by the upper bound 
in the Littlewood-Paley-Stein inequality \eqref{eq:LPS-D} 
for $-L_{0,\ell,M}$ on $\La_{\ell}$ 
with the measure $\mu_{\ell,M}$,
\begin{align*}
\|(-L_{0,\ell,M})^{1/2}  \fa\|_{L^q(\mu_{\ell,M})}
\le C_3 \|\nabla_{\La_{\ell}}^* D_\eta \fa\|_{L^q(\mu_{\ell,M})},
\end{align*}
where $C_3>0$ is uniform in $\ell$ and $M$.
Thus, we obtain by applying H\"older's inequality noting $p'>p$,
\begin{align*}
\big|E^{\mu_{N,m}}[r\fa] \big|
& \le C_2 C_3 \ell E^{\mu_{N,m}} \Big[ \|r_M \|_{L^{p'}(\mu_{\ell,M})}
\|\nabla_{\La_{\ell}}^* D_\eta \fa\|_{L^q(\mu_{\ell,M})} \Big] \\
& \le C_2 C_3 \ell E^{\mu_{N,m}} \Big[ \|r_M \|_{L^{p'}(\mu_{\ell,M})}^p\Big]^{1/p}
E^{\mu_{N,m}} \Big[ \|\nabla_{\La_{\ell}}^* D_\eta \fa\|_{L^q(\mu_{\ell,M})}^q \Big]^{1/q} \\
& \le C_2 C_3 \ell E^{\mu_{N,m}} \Big[ \|r_M \|_{L^{p'}(\mu_{\ell,M})}^{p'}\Big]^{1/p'}
E^{\mu_{N,m}} \Big[ E^{\mu_{\ell,M}} \big[ |\nabla_{\La_{\ell}}^* D_\eta \fa|^q \big]\Big]^{1/q} \\
& = C_2 C_3 \ell E^{\mu_{N,m}} \big[ |r |^{p'}\big]^{1/p'}
E^{\mu_{N,m}} \Big[ |\nabla_{\La_{\ell}}^* D_\eta \fa|^q \Big]^{1/q} \\
& = C_2 C_3 \ell \|r \|_{L^{p'}(\mu_{N,m})}
\|\nabla_{\La_{\ell}}^* D_\eta \fa\|_{L^q(\mu_{N,m})}.
\end{align*}
This completes the proof of the proposition.
\end{proof}

The steps to complete the proof of Theorem \ref{BG-2-B} will
proceed along with Lemmas \ref{lem:1.22}, 
\ref{globalrenormalization}, \ref{globaltwo-blocks} 
and \ref{EE_1block} similarly to \cite{GoJ} and \cite{BFS}.  
We need to extend them to the $L^p$ setting.
Recall that a function $h$ on $[0,T]\times\T_N$ and $u_0\in\R$ are
given as in Theorem \ref{BG-2-B}.

\begin{lem} \label{lem:1.22}
(One-block estimate, cf.\ Lemma 5.4 of \cite{BFS})
Let $f=f(\eta)$ be a local $L^{p'}(\mu_{N,m})$ function on $\mathcal{X}_{N,m}$
with $p'>p$ depending 
only on $\eta_{\La_{\ell_0}} = (\eta(x))_{x\in \La_{\ell_0}}$.  
Then, there exists $C>0$, uniform in $N, m, \ell$ and $\ga$,
such that, for $\ell\ge \ell_0$ and $p\ge 2$, we have
\begin{align}  \label{eq:L5.2-1}
E_{\mu_{N,m}} & \Bigg[ \sup_{0\le t \le T} \Big| \int_0^t \sum_{x\in \T_N}
h(s,x) \Big\{ f(\t_x \eta_s^N)- E_{\mu_{N,m}} [f(\t_x\eta_s^N)|\eta_s^{N,(\ell)}(x)]\Big\} ds
\Big|^p \Bigg] \\
& \le C T ^{(p-2)/2} N^{-p/2} \ell^{3p/2} \|f\|_{L^{p'}(\mu_{N,m})}^p 
\int_0^T \frac1N \sum_{x\in \T_N}|h(t,x)|^p dt.  \notag
\end{align}
From the uniformity of $C$ in $m$, we also have for a local $L^{p'}(\mu_{u_0})$
function $f$ on $\mathcal{X}_{N}$ depending only on $\eta_{\La_{\ell_0}}$,
\begin{align}  \label{eq:L5.2-2}
E_{\mu_{u_0}} & \Bigg[ \sup_{0\le t \le T} \Big| \int_0^t \sum_{x\in \T_N}
h(s,x) \Big\{ f(\t_x \eta_s^N)- E_{\mu_{u_0}} [f(\t_x\eta_s^N)|\eta_s^{N,(\ell)}(x)]\Big\} ds
\Big|^p \Bigg] \\
& \le C T ^{(p-2)/2} N^{-p/2} \ell^{3p/2} \|f\|_{L^{p'}(\mu_{u_0})}^p 
\int_0^T \frac1N \sum_{x\in \T_N}|h(t,x)|^p dt.  \notag
\end{align}
\end{lem}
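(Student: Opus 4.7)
The plan is to realize the one-block statement as a direct application of the pipeline laid out in Section \ref{sec:3.1}: It\^o-Tanaka (Corollary \ref{cor:1.7}) $\to$ $L^p$ variational formula (Lemma \ref{lem:1.11}) $\to$ localized Poincar\'e estimate (Proposition \ref{prop:1.21}) $\to$ Littlewood-Paley-Stein on $\T_N$ (Proposition \ref{prop:LPS}(2)). I would first set $r(\eta):=f(\eta)-E^{\mu_{N,m}}[f\mid \eta^{(\ell)}]$, which depends only on $\eta_{\La_\ell}$ since $\La_{\ell_0}\subset \La_\ell$, satisfies the conditional mean-zero hypothesis of Proposition \ref{prop:1.21}, and obeys $\|r\|_{L^{p'}(\mu_{N,m})}\le 2\|f\|_{L^{p'}(\mu_{N,m})}$ by Jensen. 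Then define $V(s,\eta):=\sum_{x\in\T_N}h(s,x)\,r(\tau_x\eta)$, which has $E^{\mu_{N,m}}[V(s,\cdot)]=0$ by translation invariance of $\mu_{N,m}$.

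Applying Corollary \ref{cor:1.7} bounds the left-hand side of \eqref{eq:L5.2-1} by $CN^{-p}T^{(p-2)/2}\int_0^T\|(-L_0)^{-1/2}V(t,\cdot)\|_{L^p(\mu_{N,m})}^p\,dt$. The variational formula (Lemma \ref{lem:1.11}) then reduces the task to estimating $E^{\mu_{N,m}}[V\fa]/\|(-L_0)^{1/2}\fa\|_{L^q(\mu_{N,m})}$ for arbitrary $\fa$, where $1/p+1/q=1$. For each fixed $x$, translation invariance of $\mu_{N,m}$ combined with Proposition \ref{prop:1.21} (applied to the shifted window $\La_\ell+x$) gives $|E^{\mu_{N,m}}[r(\tau_x\eta)\fa]|\le C\ell\,\|r\|_{L^{p'}}\,\|\nabla^*_{\La_\ell+x}D_\eta\fa\|_{L^q}$, so that $|E^{\mu_{N,m}}[V\fa]|\le C\ell\|r\|_{L^{p'}}\sum_x|h(s,x)|\|\nabla^*_{\La_\ell+x}D_\eta\fa\|_{L^q}$.

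The decisive step is to bound $\sum_x|h(s,x)|\|\nabla^*_{\La_\ell+x}D_\eta\fa\|_{L^q}$ without losing a spurious factor of $\ell^{p/2}$. I would H\"older in $x$ with exponents $p,q$ to split off the $h$-factor as $(\sum_x|h|^p)^{1/p}$, leaving $\big(\sum_x\|\nabla^*_{\La_\ell+x}D_\eta\fa\|_{L^q}^q\big)^{1/q}$. Writing $a_y:=(\nabla^* D_\eta\fa(y))^2$, the inner quantity is $E^{\mu_{N,m}}\big[\sum_x(\sum_{y\in B_\ell(x)}a_y)^{q/2}\big]$; applying H\"older in $x$ a second time with exponents $2/q,\,2/(2-q)$, and using that each bond $y$ lies in exactly $2\ell$ windows $B_\ell(x)$, yields
\[
\sum_x\Big(\sum_{y\in B_\ell(x)}a_y\Big)^{q/2}\le N^{(2-q)/2}(2\ell)^{q/2}\Big(\sum_y a_y\Big)^{q/2}=N^{(2-q)/2}(2\ell)^{q/2}|\nabla^*_{\T_N}D_\eta\fa|^q.
\]
Taking expectation and invoking the lower bound of Proposition \ref{prop:LPS}(2), $\|\nabla^*_{\T_N}D_\eta\fa\|_{L^q}\le c_q^{-1}\|(-L_0)^{1/2}\fa\|_{L^q}$, which is crucially uniform in $N$, produces $\|(-L_0)^{-1/2}V\|_{L^p}\le C\ell^{3/2}N^{(p-2)/(2p)}\|r\|_{L^{p'}}(\sum_x|h|^p)^{1/p}$. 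Raising to the $p$th power (noting $(2-q)/(2q)=(p-2)/(2p)$) and inserting into the It\^o-Tanaka bound collapses the powers of $N$ to $N^{-p}\cdot N^{(p-2)/2}=N^{-(p+2)/2}$, so after extracting $1/N$ into the normalization $\frac{1}{N}\sum_x|h|^p$ one obtains exactly the target prefactor $T^{(p-2)/2}N^{-p/2}\ell^{3p/2}$. The variant \eqref{eq:L5.2-2} follows by conditioning on $m=\eta^{(N)}$: under $\mu_{u_0}$, the conditional law on $\La_\ell$ given $\eta^{(\ell)}$ is the same $\mu_{\ell,\eta^{(\ell)}}$, so $E^{\mu_{u_0}}[f\mid\eta^{(\ell)}]=E^{\mu_{N,\eta^{(N)}}}[f\mid\eta^{(\ell)}]$; then integrate \eqref{eq:L5.2-1} against $\mu_{u_0}\circ(\eta^{(N)})^{-1}$ using uniformity of $C$ in $m$.

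The main obstacle is the aggregation in the previous paragraph: a naive pointwise estimate $\|\nabla^*_{\La_\ell+x}D_\eta\fa\|_{L^q}\le \|\nabla^*_{\T_N}D_\eta\fa\|_{L^q}$ would give an extra $N^{1/q}$ in place of $N^{(2-q)/(2q)}\ell^{1/2}$, yielding $N^{-1}\ell^p$ instead of $N^{-p/2}\ell^{3p/2}$ in the final bound and losing the $\ell$-scaling needed for the subsequent two-blocks step and the KPZ application. Getting the right $N^{(p-2)/(2p)}\ell^{1/2}$ trade-off requires simultaneously exploiting (i) the combinatorial fact that every bond is covered by precisely $2\ell$ of the $N$ windows $\La_\ell+x$ and (ii) the $N$-uniform Littlewood-Paley-Stein constant of Proposition \ref{prop:LPS}(2); the $q=2$ case (which reduces to the known estimate of \cite{BFS}) is instructive, as the second H\"older becomes an equality and one then sees why $\ell^{3/2}$ rather than $\ell$ is the correct local rate.
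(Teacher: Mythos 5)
Your proposal is correct and follows essentially the same route as the paper's proof: It\^o--Tanaka plus the variational formula, Proposition \ref{prop:1.21} on each shifted window, the covering/H\"older aggregation giving $N^{(p-2)/(2p)}\ell^{1/2}$ (the paper phrases your first H\"older as the optimized Young inequality \eqref{eq:ab}, which is the same computation), the $N$-uniform lower Littlewood--Paley--Stein bound on $\T_N$, and the superposition over $m$ for \eqref{eq:L5.2-2}. The only detail you gloss over is that integrating $\|f\|_{L^{p'}(\mu_{N,m})}^p$ against $\mu_{u_0}(\eta^{(N)}\in dm)$ is controlled by $\|f\|_{L^{p'}(\mu_{u_0})}^p$ via Jensen's inequality (concavity of $x\mapsto x^{p/p'}$ since $p<p'$), as the paper notes at the end of its proof.
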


Note that the condition $\tilde f(a_0)=0$ is unnecessary in Lemma 5.4 of \cite{BFS}.
Similarly, in our setting, the condition $E^{\mu_{N,m}}[f] =0$ is unnecessary.

\begin{proof}
Set
$$
r(t,\eta) :=\sum_{x\in \T_N} h(t, x) r_{x,\ell}(\eta)
\equiv \sum_{x\in \T_N}
h(t,x) \Big\{ f(\t_x \eta)- E^{\mu_{N,m}} [f(\t_x\eta)|\eta^{(\ell)}(x)]\Big\}.
$$ 
Then, we have
$$
E^{\mu_{N,m}}[r(t,\cdot)] =\sum_{x\in \T_N}
h(t,x) \Big\{E^{\mu_{N,m}}[ f]- E^{\mu_{N,m}}\big[ E^{\mu_{N,m}} [f|\eta^{(\ell)}] \big]\Big\}
=0.
$$
Therefore, by \eqref{eq:1.V-B} and \eqref{eq:1.32}, the left-hand side in 
the estimate \eqref{eq:L5.2-1} in this lemma is bounded by
\begin{align} \label{eq:1.33}
2C_p N^{-p} T^{(p-2)/2} \int_0^T \Big\{  \sup_{\fa\not\equiv{\rm const}}
 \frac{E^{\mu_{N,m}}[r(t,\cdot) \fa]}{\|(-L_0)^{1/2}\fa\|_{L^q(\mu_{N,m})}} \Big\}^p dt.
\end{align}
Here, for $\ell\ge \ell_0$, by Proposition \ref{prop:1.21} (shifting by $x$)
applied for each $r_{x,\ell}$ in $r(t,\cdot)$, 
noting $E^{\mu_{r,m}}[r_{x,\ell}|\eta^{(\ell)}(x)]=0$, we have
\begin{align}  \label{eq:1.34}
\big| E^{\mu_{N,m}}&[r(t,\cdot) \fa] \big|
 = \Bigg| \sum_{x\in \T_N} h(t,x) E^{\mu_{N,m}}[r_{x,\ell} \fa] \Bigg|\\
& \le \sum_{x\in \T_N} |h(t,x)| \cdot C\ell \|r_{0,\ell}\|_{L^{p'}(\mu_{N,m})} 
\|\nabla_{\La_{\ell}+x}^* D_\eta \fa\|_{L^q(\mu_{N,m})}   \notag   \\
& \le 2C\ell \| f\|_{L^{p'}(\mu_{N,m})} \sum_{x\in \T_N} |h(t,x)|  \; 
\Big\| \Big(\sum_{y\in \La_{\ell}} \big(\nabla^* D_\eta \fa(x+y)\big)^2\Big)^{1/2} 
\Big\|_{L^q(\mu_{N,m})},
\notag
\end{align}
recall that $C>0$ is uniform in $N$ and $m$.
The sum in $y: -\ell +1\le y \le \ell-1$ is bounded by that in $y\in \La_\ell$.

We estimate $a(x)\equiv a(x,\eta) :=\nabla^* D_\eta \fa(x)$, noting $q/2<1$
and first assuming $p>2$ (i.e.\ $p\not=2$), by H\"older's inequality,
\begin{align*}
\sum_{x\in \T_N} \Big( \sum_{y\in \La_{\ell}} a(x+y)^2 \Big)^{q/2}
& \le \Big( \sum_{x\in \T_N} 1^{\bar p}\Big) ^{1/\bar p}
 \Big( \sum_{x\in \T_N}\sum_{y\in \La_{\ell}} a(x+y)^2 \Big)^{q/2}\\
& = N^{(p-2)/2(p-1)} \Big( 2\ell \sum_{x\in \T_N} a(x)^2 \Big)^{q/2},
\end{align*}
where $\bar p$ is defined by $1/{\bar p}+{q}/2=1$ so  $\bar p=2/(2-q)$
and, recalling $q=p/(p-1)$, we further have $\bar p = 2(p-1)/(p-2)$.
Thus, we obtain
\begin{align}  \label{eq:1.35}
\sum_{x\in \T_N} &
\Big\| \Big(\sum_{y\in \La_{\ell}} \big(\nabla^* D_\eta \fa(x+y)\big)^2
\Big)^{1/2} \Big\|_{L^q(\mu_{N,m})}^q  \\
&  \le N^{(p-2)/2(p-1)} (2\ell)^{q/2}
\Big\| \Big(\sum_{x\in \T_N } \big(\nabla^* D_\eta \fa(x) \big)^2\Big)^{1/2} 
\Big\|_{L^q(\mu_{N,m})}^q.
\notag
\end{align}
Note that \eqref{eq:1.35} is obvious when $p=q=2$.

We now return to \eqref{eq:1.34} and use the equality
\begin{align}  \label{eq:ab}
ab= \inf_{\k>0} \Big( \frac{(\k a)^p}p + \frac{(b/\k)^q}q \Big), \quad a,b>0,
\end{align}
for $a= |h(t,x)|$ and $b= \Big\| \Big(\sum_{y\in \La_{\ell}} 
\big(\nabla^* D_\eta \fa(x+y)\big)^2\Big)^{1/2} \Big\|_{L^q(\mu_{N,m})}$.
For \eqref{eq:ab}, $\lq\lq \le$" is clear, while take
$\k= b^{1/p}/a^{1/q}$ for $\lq\lq\ge$".
Noting $\sum_{x\in \T_N}\inf_{\k>0} \le \inf_{\k>0} \sum_{x\in \T_N}$, we obtain
from \eqref{eq:1.34} and \eqref{eq:1.35}
\begin{align}  \label{eq:1.36}
\big|E^{\mu_{N,m}}[r(t,\cdot)\fa] \big|
\le & C_1 \ell \| f\|_{L^{p'}(\mu_{N,m})} \\
& \times \inf_{\k>0}
\Big\{ \k^p \sum_{x\in \T_N} |h(t,x)|^p + \k^{-q}
N^{(p-2)/2(p-1)} (2\ell)^{q/2} \cdot \Phi  \Big\},
\notag
\end{align}
where
$$
\Phi := \Big\| \Big(\sum_{x\in \T_N } \big(\nabla^* D_\eta \fa(x)\big)^2\Big)^{1/2} 
\Big\|_{L^q(\mu_{N,m})}^q.
$$
Take 
$$
\k= \left(\frac{N^{(p-2)/2(p-1)} (2\ell)^{q/2} \cdot \Phi}{ \sum_{x\in \T_N} |h(t,x)|^p}\right)^{1/(pq)}.
$$
Then, recalling $1/p + 1/q=1$, the two terms in the infimum in $\k>0$ balance,
and \eqref{eq:1.36} is further estimated as
$$
\big| E^{\mu_{N,m}}[r(t,\cdot)\fa] \big|
\le C_2\ell \| f\|_{L^{p'}(\mu_{N,m})} \Big(\sum_{x\in \T_N} |h(t,x)|^p\Big)^{1/p}
N^{(p-2)/(2p)} \ell^{1/2} \Phi^{1/q}.
$$

On the other hand,  by the lower bound in \eqref{eq:LPS-D} in
Proposition \ref{prop:LPS} (2) on $\T_N$,
the denominator in the supremum in \eqref{eq:1.33} is estimated from below as
\begin{align*}  
\|(-L_0)^{1/2}\fa\|_{L^q(\mu_{N,m})} 
& \ge c \| \nabla_{\T_N}^* D_\eta \fa \|_{L^q(\mu_{N,m})}   \\
& = c \Big\| \Big(\sum_{x\in \T_N } \big(\nabla^* D_\eta \fa(x)\big)^2
\Big)^{1/2} \Big\|_{L^q(\mu_{N,m})}
= c \Phi^{1/q},
\end{align*}
where $c=c_q>0$ is uniform in $N$ and $m$; note $1<q\le 2$ (cf.\ the last
comment in Remark \ref{rem:4.2-C}).
Thus, returning to \eqref{eq:1.33}, it is further bounded by 
\begin{align} \label{eq:5.10-B}
2C_p N^{-p} T^{(p-2)/2} \int_0^T \Big\{  c^{-1}C_2\ell 
\| f\|_{L^{p'}(\mu_{N,m})} \Big(\sum_{x\in \T_N} |h(t,x)|^p\Big)^{1/p}
N^{(p-2)/(2p)} \ell^{1/2} \Big\}^p dt
\end{align}
and, recalling that $C_p, c, C_2>0$ are all uniform in $N,m,\ell,\ga$,
we obtain the first estimate \eqref{eq:L5.2-1} under $\mu_{N,m}$.

To show the second estimate \eqref{eq:L5.2-2}
under $\mu_{u_0}$, we note the uniformity of $C>0$ in \eqref{eq:L5.2-1}
in $m\in \R$ and, regarding $m=\eta^{(N)}$ as random, we integrate 
\eqref{eq:L5.2-1} in $m$ under
$\mu_{u_0}(\eta^{(N)}\in dm)$.  Then, we first have
\begin{align}  \label{eq:superpose}
\int_\R \mu_{N,m}(\cdot) \mu_{u_0}(\eta^{(N)}\in dm) = \mu_{u_0}(\cdot).
\end{align}

Next, inside the expectation of \eqref{eq:L5.2-1}, replacing $m$ by 
$\eta^{(N)}$, we have the term
\begin{align}  \label{eq:5.7-A}
E^{\mu_N, \eta^{(N)}}[f(\t_x\eta)|\eta^{(\ell)}(x)]
= E^{\mu_{u_0}(\cdot| \eta^{(N)})}[f(\t_x\eta)|\eta^{(\ell)}(x)].
\end{align}
However, this coincides with $E^{\mu_{u_0}}[f(\t_x\eta)|\eta^{(\ell)}(x)]$.
To show this, since this function is $\si(\eta^{(\ell)}(x))$-measurable, we only
need to show
\begin{align}  \label{eq:1.cond-B}
E^{\mu_{u_0}(\cdot| \eta^{(N)})}[f(\t_x\eta) \Phi ]
= E^{\mu_{u_0}(\cdot| \eta^{(N)})}[E^{\mu_{u_0}}[f(\t_x\eta)|\eta^{(\ell)}(x)]\Phi ]
\end{align}
for any $\si(\eta^{(\ell)}(x))$-measurable bounded function $\Phi$;
recall that $f$ depends only on $\eta_{\La_{\ell_0}}$ and $\ell\ge \ell_0$.  Then,
the right-hand side of \eqref{eq:1.cond-B} is equal to
\begin{align} \label{eq:5.8-B}
&E^{\mu_{u_0}(\cdot| \eta^{(N)})}[E^{\mu_{u_0}}[f(\t_x\eta)\Phi|\eta^{(\ell)}(x)]] \\
&\quad = E^{\mu_{u_0}}[E^{\mu_{u_0}}[f(\t_x\eta)\Phi|\eta^{(\ell)}(x)]| \eta^{(N)}] 
 \notag  \\
&\quad = E^{\mu_{u_0}}[E^{\mu_{u_0}}[f(\t_x\eta)\Phi|
\eta^{(\ell)}(x), \eta_{\La_\ell(x)^c}]| \eta^{(N)}]     \notag  \\
& \quad = E^{\mu_{u_0}}[f(\t_x\eta)\Phi| \eta^{(N)}]  \notag
\end{align}
where $\La_\ell(x)= \La_\ell+x \subset \T_N$,
$\La_\ell(x)^c = \T_N\setminus \La_\ell(x)$, $\eta_{\La_\ell(x)^c}
= (\eta(y))_{y \in \La_\ell(x)^c}$.
The second equality in \eqref{eq:5.8-B} follows from the fact that
$f(\t_x\eta)\Phi$ is $\si(\eta_{\La_\ell(x)})$-measurable and $\mu_{u_0}$
is a product measure, while the last equality follows from
$\si(\eta^{(N)}) \subset \si(\eta^{(\ell)}(x), \eta_{\La_\ell(x)^c})$.
However, the right-hand side of \eqref{eq:5.8-B} 
is equal to the left-hand side of \eqref{eq:1.cond-B}.  Thus, 
 we can replace  $E^{\mu_{N,m}}[f(\t_x\eta)|\eta^{(\ell)}(x)]$
inside the expectation of \eqref{eq:L5.2-1} with
$E^{\mu_{u_0}}[f(\t_x\eta)|\eta^{(\ell)}(x)]$ as in  \eqref{eq:L5.2-2}.

Finally, we integrate the right-hand side of the estimate \eqref{eq:L5.2-1}
in $m$ by $\mu_{u_0}(\eta^{(N)}\in dm)$.  Then, noting that the function
$\psi(x)=x^{p/p'}, x \ge 0$, is concave since $p<p'$, by Jensen's inequality
and recalling \eqref{eq:superpose},
we have
\begin{align*}
\int_\R \|f\|_{L^{p'}(\mu_{N,m})}^p \mu_{u_0}(\eta^{(N)}\in dm)
& = \int_\R \mu_{u_0}(\eta^{(N)}\in dm)\Big( \int |f(\eta)|^{p'} d \mu_{N,m} \Big)^{p/p'}  \\
& \le   \Big( \int_\R  \mu_{u_0}(\eta^{(N)}\in dm)\int |f(\eta)|^{p'} d \mu_{N,m} \Big)^{p/p'}  \\
& = \Big( \int |f|^{p'} d\mu_{u_0}\Big)^{p/p'} = \|f\|_{L^{p'}(\mu_{u_0})}^p.
\end{align*}
This concludes the proof of the second estimate \eqref{eq:L5.2-2}
under $\mu_{u_0}$.
(Note that the local uniformity in $m$ is sufficient, since the tail
of $\mu_{u_0}$ decays exponentially fast by the large deviation estimate.)
\end{proof}

We will improve the constant $N^{-p/2}\ell^{3p/2}$ obtained
in  \eqref{eq:L5.2-2} in Lemma \ref{lem:1.22} to $N^{-p/2}\ell^{p/2}$ for
 the first estimate \eqref{eq:L5.4-1} and $N^{-p/2}\ell^p$ for the second 
 estimate \eqref{eq:L5.4-2} in Lemma \ref{globaltwo-blocks} based on 
an iteration argument prepared in Lemma \ref{globalrenormalization}.
During the proof, we use Lemma \ref{lem:1.22} only for $\ell=\ell_0$.  
Then, we have $N^p\ell^{-3p/2}$ 
for the first estimate \eqref{eq:L5.5-1} and $N^p\ell^{-p}$ for the second 
estimate \eqref{eq:L5.5-2} in Lemma \ref{EE_1block}.

\begin{lem}  \label{globalrenormalization} (Iteration lemma,
cf.\ Lemma 5.5 of \cite{BFS})
Let $p\ge 4$ and let $f$ be a local $L^{p'}(\mu_{u_0})$ function on
$\mathcal{X}_N$ for some $p'>p$ depending only on $\eta_{\La_{\ell_0}}$ 
such that $\tilde f(u_0)=0$ and $\tilde f'(u_0)=0$.
Then, there exists a constant $C=C(u_0, \ell_0)>0$ such that, for $\ell \geq \ell_0$, 
and a function $h$ on $[0,T]\times \T_N$, we have
\begin{align}  \label{eq:L5.3-1}
&E_{\mu_{u_0}}\Big[\sup_{0\leq t\leq T}\Big|\int_0^t\sum_{x\in{\T_N}} h(s,x) \\
& \hskip 20mm \times \Big\{ E_{\mu_{u_0}}[f(\tau_x\eta_s^N)|\eta_s^{N,(\ell)}(x)] 
-E_{\mu_{u_0}}[f(\tau_x \eta_s^N)|\eta_s^{N,(2\ell)}(x)]\Big\} ds \Big|^p\Big] 
\notag  \\
&\hskip 10mm \leq \ CT^{(p-2)/2} N^{-p/2} \ell^{p/2} \|f\|^p_{L^{p'}(\mu_{u_0})}
\int_0^T \frac{1}{N}\sum_{x\in{\T_N}} |h(t,x)|^p dt.
\notag
\end{align}
On the other hand, when only $\tilde f(u_0)=0$ is known, we have
\begin{align}  \label{eq:L5.3-2}
&E_{\mu_{u_0}}\Big[\sup_{0\leq t\leq T}\Big|\int_0^t\sum_{x\in{\T_N}} h(s,x)\\
&\hskip 20mm  \times \Big\{E_{\mu_{u_0}}[f(\tau_x\eta_s^N)|\eta_s^{N,(\ell)}(x)]
-E_{\mu_{u_0}}[f(\tau_x\eta_s^N)|\eta_s^{N,(2\ell)}(x)]\Big\} ds \Big|^p\Big] 
\notag \\
&\hskip 10mm  \leq \ CT^{(p-2)/2} N^{-p/2} \ell^p \|f\|^p_{L^{p'}(\mu_{u_0})}
\int_0^T \frac{1}{N}\sum_{x\in{\T_N}} |h(t,x)|^p dt.
\notag
\end{align}
\end{lem}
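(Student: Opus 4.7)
The plan is to adapt the proof of Lemma \ref{lem:1.22} (one-block estimate), with the crucial refinement that the integrand is now a \emph{difference} of conditional expectations at two scales, which by the equivalence of ensembles is much smaller in $L^{p'}$ than $\|f\|_{L^{p'}}$ itself. I would set
\[
r(\eta) := E^{\mu_{u_0}}\!\bigl[f(\eta)\mid \eta^{(\ell)}\bigr] - E^{\mu_{u_0}}\!\bigl[f(\eta)\mid \eta^{(2\ell)}\bigr].
\]
This $r$ depends only on $\eta_{\La_{2\ell}}$ (since $f$ is $\eta_{\La_{\ell_0}}$-measurable and $\ell\ge\ell_0$), and by the tower property satisfies $E^{\mu_{u_0}}[r\mid\eta^{(2\ell)}]=0$; by the identification of the two conditional expectations $E^{\mu_{N,m}}[\cdot\mid\eta^{(k)}(x)] = E^{\mu_{u_0}}[\cdot\mid\eta^{(k)}(x)]$ already established around \eqref{eq:1.cond-B}--\eqref{eq:5.8-B}, the same centering holds under $\mu_{N,m}$.

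Next, I would run the same chain of estimates that produced \eqref{eq:L5.2-1}, namely Itô--Tanaka (Corollary \ref{cor:1.7}), the $L^p$ variational formula (Lemma \ref{lem:1.11}), and the localization bound of Proposition \ref{prop:1.21}, but with the box $\La_\ell$ replaced by $\La_{2\ell}$. Following the Hölder balancing argument leading to \eqref{eq:5.10-B} verbatim, this yields
\[
\mathrm{LHS} \;\le\; C\, T^{(p-2)/2}\, N^{-p/2}\, (2\ell)^{3p/2}\, \|r\|_{L^{p'}(\mu_{u_0})}^{p}\, \int_0^T \frac{1}{N}\sum_{x\in\T_N}|h(t,x)|^p\, dt,
\]
with constants uniform in $N,m,\ga$; here the factor $(2\ell)^{3p/2}$ is the product of the $(2\ell)^p$ from the $C\ell$-prefactor of Proposition \ref{prop:1.21} and the $(2\ell)^{p/2}$ arising from the $\ell^{1/2}$ after the balancing infimum. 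The transition from $\mu_{N,m}$ to $\mu_{u_0}$ proceeds exactly as at the end of Lemma \ref{lem:1.22}, via the superposition identity and Jensen's inequality for $x\mapsto x^{p/p'}$.

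It remains to estimate $\|r\|_{L^{p'}(\mu_{u_0})}$ via the equivalence of ensembles (Proposition \ref{thm:EE}) applied at both scales $\ell$ and $2\ell$, with some $p''>p'$. For \eqref{eq:L5.3-1}, where $\tilde f(u_0)=\tilde f'(u_0)=0$, \eqref{eq:4.EE-A} gives
\[
E^{\mu_{u_0}}[f\mid \eta^{(k)}] = \tfrac12\tilde f''(u_0)\Bigl\{(\eta^{(k)}-u_0)^2 - \tfrac{\mathrm{var}(u_0)}{2k+1}\Bigr\} + O(k^{-3/2})\quad \text{in } L^{p'}(\mu_{u_0})
\]
for $k\in\{\ell,2\ell\}$. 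Subtracting, the quadratic main terms combine to a quantity of order $O(\ell^{-1})$ in $L^{p'}$ (since $(\eta^{(k)}-u_0)^2$ is of order $k^{-1}$ in $L^{p'}$ by the CLT-type bound available under the assumption \eqref{eq:Assump}), which dominates the $O(\ell^{-3/2})$ EE errors. Therefore $\|r\|_{L^{p'}(\mu_{u_0})} \le C\|f\|_{L^{p''}(\mu_{u_0})}/\ell$, and $\ell^{3p/2}\cdot \ell^{-p}=\ell^{p/2}$ yields \eqref{eq:L5.3-1}. For \eqref{eq:L5.3-2}, only $\tilde f(u_0)=0$ is assumed; using \eqref{eq:4.EE-B} instead, the leading first-order parts yield $\tilde f'(u_0)(\eta^{(\ell)}-\eta^{(2\ell)})$ whose $L^{p'}$-norm is of order $\ell^{-1/2}$ by the CLT, dominating the $O(\ell^{-1})$ EE errors, so $\|r\|_{L^{p'}(\mu_{u_0})}\le C\|f\|_{L^{p''}(\mu_{u_0})}/\sqrt{\ell}$, and $\ell^{3p/2}\cdot\ell^{-p/2}=\ell^p$ produces \eqref{eq:L5.3-2}.

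The main obstacle I expect is bookkeeping rather than conceptual: one must thread the exponents $p<p'<p''$ consistently through the composition EE $\to$ Proposition \ref{prop:1.21} $\to$ Hölder balancing so that the constants stay uniform in $N,m,\ell,\ga$; and one must verify carefully that the smallness of $\|r\|_{L^{p'}(\mu_{u_0})}$ obtained from Proposition \ref{thm:EE} is indeed the two-scale difference claimed, i.e.\ that the $O(k^{-3/2})$ and $O(k^{-1})$ \emph{remainders} (as opposed to the quadratic/linear main terms) are what ultimately limit the bound. Everything else is a faithful rerun of the proof of Lemma \ref{lem:1.22}.
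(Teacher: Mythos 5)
Your proposal is correct and follows essentially the same route as the paper: the same two-scale difference $\hat r_{x,\ell}$, the same chain It\^o--Tanaka $\to$ variational formula $\to$ Proposition \ref{prop:1.21} (applied on the box of size $2\ell$, which only changes constants), and the same equivalence-of-ensembles bounds $\|\hat r_{0,\ell}\|\lesssim \|f\|\,\ell^{-1}$ resp.\ $\ell^{-1/2}$ followed by the arithmetic $\ell^{3p/2}\cdot\ell^{-p}=\ell^{p/2}$ and $\ell^{3p/2}\cdot\ell^{-p/2}=\ell^{p}$. The one bookkeeping point to fix is the ordering of exponents: to end with $\|f\|_{L^{p'}}$ as stated, run Proposition \ref{prop:1.21} with an intermediate exponent $p''\in(p,p')$ and then apply Proposition \ref{thm:EE} with the pair $(p'',p')$, rather than introducing a $p''>p'$, which would require a moment of $f$ beyond the hypothesis.
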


\begin{proof}  
We follow similar steps as in the proof of Lemma \ref{lem:1.22} based on
\eqref{eq:1.V-B} and \eqref{eq:1.32}, replacing $r(t,\eta)$ with 
\begin{align*}
\hat r(t,\eta) := & \sum_{x\in \T_N} h(t,x)  \hat r_{x,\ell}(\eta)  \\
\equiv & \sum_{x\in \T_N} h(t,x)  \Big\{E^{\mu_{u_0}}[f(\tau_{x}\eta)|\eta^{(\ell)}(x)]
-E^{\mu_{u_0}}[f(\tau_{x}\eta)|\eta^{(2\ell)}(x)]\Big\}.
\end{align*}
Note that  in the above definition, $\mu_{u_0}$ can be replaced by $\mu_{N,m}$,
as we noted below \eqref{eq:5.8-B}, which follows from \eqref{eq:5.7-A}
and \eqref{eq:1.cond-B}.  Therefore, we have $E^{\mu_{N,m}}[\hat r(t,\eta)]=0$.

Let us start by showing the first estimate \eqref{eq:L5.3-1}.
In \eqref{eq:1.34}, we estimated as $\|r_{0,\ell}\|_{L^{p'}(\mu_{N,m})} \le 2 
\|f\|_{L^{p'}(\mu_{N,m})}$ in the third line.  Here, instead, to estimate 
$|E^{\mu_{N,m}}[\hat r(t,\cdot)\fa]|$, taking $p''$ such that $p<p''<p'$, 
we replace $\|r_{0,\ell}\|_{L^{p'}(\mu_{N,m})}$ by $\|\hat r_{0,\ell}\|_{L^{p''}(\mu_{N,m})}$
in the second line of \eqref{eq:1.34}.  Then, keeping it in the following 
calculation up to \eqref{eq:5.10-B}, we can estimate \eqref{eq:1.33}
with $r(t,\cdot)$ replaced by $\hat r(t,\cdot)$ by \eqref{eq:5.10-B} with 
$2\|f\|_{L^{p'}(\mu_{N,m})}$ replaced by $\|\hat r_{0,\ell}\|_{L^{p''}(\mu_{N,m})}$.
Therefore, instead of \eqref{eq:L5.3-1}, we obtain
\begin{align}  \label{eq:L5.3-1-B}
&E_{\mu_{N,m}}\Big[\sup_{0\leq t\leq T}\Big|\int_0^t\sum_{x\in{\T_N}} h(s,x) \\
& \hskip 20mm \times \Big\{ E_{\mu_{u_0}}[f(\tau_x\eta_s^N)|\eta_s^{N,(\ell)}(x)] 
-E_{\mu_{u_0}}[f(\tau_x \eta_s^N)|\eta_s^{N,(2\ell)}(x)]\Big\} ds \Big|^p\Big] 
\notag  \\
&\hskip 10mm \leq \ CT^{(p-2)/2} N^{-p/2} \ell^{3p/2} 
\|\hat r_{0,\ell}\|^p_{L^{p''}(\mu_{N,m})}
\int_0^T \frac{1}{N}\sum_{x\in{\T_N}} |h(t,x)|^p dt,
\notag
\end{align}
where $C>0$ is uniform in $N, m, \ell$ and $\ga$.
Then, we integrate both sides of \eqref{eq:L5.3-1-B} with respect to $m$,
recalling \eqref{eq:superpose}.  Thus, by H\"older's inequality noting $p<p''$,
we see that the left-hand side of \eqref{eq:L5.3-1} is bounded by
\begin{align}  \label{eq:18-Q}
CT^{(p-2)/2} N^{-p/2} \ell^{3p/2} 
\|\hat r_{0,\ell}\|^p_{L^{p''}(\mu_{u_0})}
\int_0^T \frac{1}{N}\sum_{x\in{\T_N}} |h(t,x)|^p dt.
\end{align}
We will further estimate $\|\hat r_{0,\ell}\|_{L^{p''}(\mu_{u_0})}$ as
\begin{align}  \label{eq:1.37-B}
\|\hat r_{0,\ell}\|_{L^{p''}(\mu_{u_0})} \equiv
\Big\|E^{\mu_{u_0}}[f(\eta)|\eta^{(\ell)}]-E^{\mu_{u_0}}[f(\eta)|\eta^{(2\ell)}]\Big\|_{L^{p''}(\mu_{u_0})}
\le C \|f\|_{L^{p'}(\mu_{u_0})}\ell^{-1}.
\end{align}
Once this is shown, it leads to the estimate \eqref{eq:L5.3-1}.  Note that
a new factor $(\ell^{-1})^p$ appears from \eqref{eq:1.37-B} for 
$\|\hat r_{0,\ell}\|^p_{L^{p''}(\mu_{u_0})}$
in \eqref{eq:18-Q}.  Compare this to the estimate in Lemma \ref{lem:1.22}.

To prove \eqref{eq:1.37-B}, we bound its left-hand side by
\begin{align*}
& \Big\|E^{\mu_{u_0}}\Big[f(\eta) - \tfrac{1}{2} \tilde f''(u_0)\Big\{(\eta^{(\ell)} - u_0)^2
  -\tfrac{{\rm var}(u_0) }{2\ell +1}\Big\}\Big|\eta^{(\ell)}\Big]\Big\|_{L^{p''}(\mu_{u_0})}\\
&   +\Big\|E^{\mu_{u_0}}\Big[f(\eta)
 - \tfrac{1}{2} \tilde f''(u_0) \Big\{(\eta^{(2\ell)} - u_0)^2  -\tfrac{{\rm var}(u_0)}{4\ell+1}\Big\}\Big|\eta^{(2\ell)}\Big]\Big\|_{L^{p''}(\mu_{u_0})}\\
&  + \Big\|\tfrac{1}{2} \tilde f''(u_0)\Big\{E^{\mu_{u_0}}\Big[(\eta^{(\ell)} - u_0)^2 
-\tfrac{{\rm var}(u_0)}{2\ell +1}\Big|\eta^{(\ell)}\Big] \\
& \hskip 20mm
+ E^{\mu_{u_0}}\Big[(\eta^{(2\ell)} - u_0)^2 -\tfrac{{\rm var}(u_0)}{4\ell+1}\Big|\eta^{(2\ell)}\Big]\Big\}
\Big\|_{L^{p''}(\mu_{u_0})}.
\end{align*}
The first two terms are  of order $\|f\|_{L^{p'}(\mu_{u_0})} \times O(\ell^{-3/2})$
by Proposition \ref{thm:EE}; note $p'>p''$.  For the last term, by a $p''$-moment
bound of $(\eta^{(\ell')}-u_0)^2$ with variously $\ell'=\ell$ and $2\ell$ and using that 
$|\tilde f''(u_0)| \le C \|f\|_{L^2(\mu_{u_0})}$,  it is of order 
$\|f\|_{L^2(\mu_{u_0})}\times O(\ell^{-1})$.  This proves \eqref{eq:1.37-B}.

To show the second estimate \eqref{eq:L5.3-2}, under the condition 
$\tilde f(u_0)=0$ only, we claim the bound
\begin{align*}
\|\hat r_{0,\ell}\|_{L^{p''}(\mu_{u_0})}\le C \|f\|_{L^{p'}(\mu_{u_0})}\ell^{-1/2}.
\end{align*}
Indeed, we bound similarly as above.  The first two terms (modified properly)
are bounded by Proposition \ref{thm:EE} and they are
of order $\|f\|_{L^{p'}(\mu_{u_0})}\times O(\ell^{-1})$, while the last term 
(also modified properly) is of order $\|f\|_{L^2(\mu_{u_0})}\times O(\ell^{-1/2})$.
Therefore, we now have a new factor $(\ell^{-1/2})^p$ in the final form 
compared to that of Lemma \ref{lem:1.22} for the second estimate. 
\end{proof}

The next lemma replaces $\ell_0$ with $\ell \ge \ell_0$ and Lemma \ref{EE_1block} will apply for large $\ell$. 

\begin{lem}  \label{globaltwo-blocks} (Two-blocks estimate,
cf.\ Lemma 5.6 of \cite{BFS})
Let $p\ge 4$ and let $f$ be a local $L^{p'}(\mu_{u_0})$ function on
$\mathcal{X}_N$ for some $p'>p$ depending only on $\eta_{\La_{\ell_0}}$ 
such that $\tilde f(u_0)=0$ and $\tilde f'(u_0)=0$.  Then, there exists 
a constant $C = C(u_0,\ell_0)$ such that, for $\ell \geq  \ell_0$,  and 
a function $h$ on $[0,T]\times \T_N$, we have
\begin{align}  \label{eq:L5.4-1}
&E_{\mu_{u_0}}\Big[\sup_{0\leq t\leq T}\Big|\int_0^t\sum_{x\in{\T_N}} h(s,x)\\
& \hskip 20mm \times\Big\{
E_{\mu_{u_0}}[f(\tau_x\eta_s^N)|\eta_s^{N,(\ell_0)}(x)]
-E_{\mu_{u_0}}[f(\tau_x\eta_s^N)|\eta_s^{N,(\ell)}(x)]\Big\} ds \Big|^p\Big] 
\notag\\
&\hskip 10mm  \leq CT^{(p-2)/2} N^{-p/2}\ell^{p/2} \|f\|^p_{L^{p'}(\mu_{u_0})}
\int_0^T \frac1{N}\sum_{x\in{\T_N}} |h(t,x)|^p dt.
\notag
\end{align}
On the other hand, when only $\tilde f(u_0)=0$ is known, we have
\begin{align}  \label{eq:L5.4-2}
&E_{\mu_{u_0}}\Big[\sup_{0\leq t\leq T}\Big|\int_0^t\sum_{x\in{\T_N}} h(s,x) \\
& \hskip 20mm \times \Big\{E_{\mu_{u_0}}[f(\tau_x\eta_s^N)|\eta_s^{N,(\ell_0)}(x)]-E_{\mu_{u_0}}[f(\tau_x\eta_s^N)|\eta_s^{N,(\ell)}(x)]\Big\} ds \Big|^p\Big] 
\notag\\
&\hskip 10mm \leq CT^{(p-2)/2} N^{-p/2} \ell^p \|f\|^p_{L^{p'}(\mu_{u_0})}
\int_0^T \frac1N \sum_{x\in{\T_N}} |h(t,x)|^p dt.
\notag
\end{align}
\end{lem}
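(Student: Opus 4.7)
The plan is to deduce the two-blocks estimate from the iteration lemma (Lemma \ref{globalrenormalization}) via a dyadic telescoping argument combined with Minkowski's inequality in $L^p(\mu_{u_0})$. Let $K\in\N$ be the largest integer with $\ell_K:=2^K\ell_0\le\ell$, so that $\ell_K\le\ell<2\ell_K$. Inside the supremum on the left-hand side of \eqref{eq:L5.4-1}--\eqref{eq:L5.4-2}, decompose the integrand as the telescoping sum
\begin{align*}
&E_{\mu_{u_0}}[f(\t_x\eta_s^N)|\eta_s^{N,(\ell_0)}(x)] - E_{\mu_{u_0}}[f(\t_x\eta_s^N)|\eta_s^{N,(\ell)}(x)] \\
&\qquad = \sum_{k=0}^{K-1}\De_k(s,x) + R(s,x),
\end{align*}
where $\De_k$ is the difference of conditional expectations on the dyadic scales $\ell_k$ and $\ell_{k+1}=2\ell_k$, and $R$ is the residual going from scale $\ell_K$ to scale $\ell$.

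For each dyadic level $k\in\{0,1,\ldots,K-1\}$, Lemma \ref{globalrenormalization} applied with $\ell$ replaced by $\ell_k$ yields, in the second-order case,
\begin{align*}
I_k^p & := E_{\mu_{u_0}}\Big[\sup_{t\le T}\Big|\int_0^t\sum_{x\in \T_N} h(s,x)\De_k(s,x)\,ds\Big|^p\Big] \\
& \le C T^{(p-2)/2}N^{-p/2}\ell_k^{p/2}\|f\|^p_{L^{p'}(\mu_{u_0})}\int_0^T\tfrac1N\sum_{x\in \T_N}|h(t,x)|^p\,dt,
\end{align*}
and the analogous bound with $\ell_k^{p/2}$ replaced by $\ell_k^p$ in the first-order case. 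Taking $p$-th roots, applying Minkowski's inequality over $k$, and summing the geometric series
$$
\sum_{k=0}^{K-1}\ell_k^{1/2} \le C\ell_K^{1/2}\le C\ell^{1/2},\qquad
\sum_{k=0}^{K-1}\ell_k \le C\ell_K\le C\ell,
$$
then raising back to the $p$-th power, produces the claimed constants $\ell^{p/2}$ and $\ell^p$ respectively. The geometric summation dominates the logarithmic factor in $K$ precisely because the dyadic ratios $2^{1/2}$ and $2$ both exceed $1$.

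For the residual $R$, I observe that the proof of Lemma \ref{globalrenormalization} uses the fact that $2\ell$ is a doubled scale only to invoke the key bound \eqref{eq:1.37-B} via Proposition \ref{thm:EE} simultaneously at both scales. The identical argument with the pair of scales $(\ell_K,\ell)$ (noting $\ell_K\le\ell<2\ell_K$) furnishes the analogue of \eqref{eq:L5.3-1}--\eqref{eq:L5.3-2} with $\ell_K$ playing the role of $\ell$, hence an $L^p$-bound of the same order as that for $\De_{K-1}$; this is absorbed into the dyadic sum at negligible cost.

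The main obstacle I anticipate is verifying that the constant $C=C(u_0,\ell_0)$ from Proposition \ref{thm:EE} is genuinely uniform in the scale $\ell_k$ at which the sample average is taken, so that a single constant controls all $O(\log(\ell/\ell_0))$ dyadic pieces. This uniformity is precisely the content of Proposition \ref{thm:EE} (and inherited by Lemma \ref{globalrenormalization}), and it is what permits the final bound to depend only on $u_0$ and $\ell_0$. A minor additional bookkeeping point is that Lemma \ref{globalrenormalization} requires $\tilde f'(u_0)=0$ for the sharper rate $\ell^{p/2}$; this hypothesis is propagated unchanged through the telescoping since each $\De_k$ involves the same $f$.
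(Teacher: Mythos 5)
Your proposal is correct and follows essentially the same route as the paper: a dyadic telescoping decomposition from scale $\ell_0$ up to $2^n\ell_0\le\ell<2^{n+1}\ell_0$, Minkowski's inequality, the iteration lemma at each dyadic level (with the residual pair $(2^n\ell_0,\ell)$ handled by re-running the iteration-lemma argument for non-exactly-doubled scales), and summation of the geometric series $\sum_k(2^k\ell_0)^{1/2}\lesssim\ell^{1/2}$ (resp.\ $\sum_k 2^k\ell_0\lesssim\ell$). Your remarks on the uniformity of the constant across scales and on the residual term match the paper's treatment.
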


\begin{proof}  
Write $\ell = 2^{n}\ell_0 + r$ where $n\in \Z_+$ and $0\leq r\leq 2^{n}\ell_0 -1$.  Then,
\begin{align*}
& E_{\mu_{u_0}}[f(\tau_x\eta_s^N)|\eta_s^{N,(\ell_0)}(x)]
-E_{\mu_{u_0}}[f(\tau_x\eta_s^N)|\eta_s^{N,(\ell)}(x)]  \\
& \quad  = \sum_{i=0}^{n-1} \Big\{E_{\mu_{u_0}}[f(\tau_x\eta_s^N)|\eta_s^{N,(2^i \ell_0)}(x)]-E_{\mu_{u_0}}[f(\tau_x\eta_s^N)|\eta_s^{N,(2^{i+1}\ell_0)}(x)]\Big\}  \\
& \qquad + 
\ E_{\mu_{u_0}}[f(\tau_x\eta_s^N)|\eta_s^{N,(2^{n}\ell_0)}(x)]  
- E_{\mu_{u_0}}[f(\tau_x\eta_s^N)|\eta_s^{N,(\ell)}(x)].
\end{align*}
Now, by Minkowski's inequality and \eqref{eq:L5.3-1} in Lemma
\ref{globalrenormalization} (for the last term above, noting that $\ell \in 
[2^n\ell_0, 2^{n+1}\ell_0)$, we have a similar estimate to 
\eqref{eq:L5.3-1} replacing $\ell$ in \eqref{eq:L5.3-1} with $2^n\ell_0$ 
and replacing $2\ell$  in \eqref{eq:L5.3-1} with $\ell$
in the present setting), we obtain
that the left-hand side of \eqref{eq:L5.4-1} is bounded by 
\begin{align*}
& CT^{(p-2)/2} N^{-p/2} \left\{ 
\sum_{i=0}^{n-1} \big(2^{i}\ell_0\big)^{1/2} +\big(2^{n}\ell_0\big)^{1/2} 
 \right\}^p \|f\|^p_{L^{p'}(\mu_{u_0})} 
\int_0^T \frac1N \sum_{x\in \T_N} |h(t,x)|^p dt\\
& \le C_1 T^{(p-2)/2} N^{-p/2} \ell^{p/2} \|f\|^p_{L^{p'}(\mu_{u_0})} 
\int_0^T \frac1N\sum_{x\in \T_N} |h(t,x)|^p dt.
 \end{align*}
This concludes the proof of \eqref{eq:L5.4-1}.  
The second estimate \eqref{eq:L5.4-2}
is shown similarly from \eqref{eq:L5.3-2}.
\end{proof}

The last step is based on the equivalence of ensembles.
Recall that $E^{\mu_{u_0}}[f|\eta^{(\ell)}] = E^{\mu_{N,m}}[f|\eta^{(\ell)}]$
for $f$ depending only on $\eta_{\La_{\ell_0}}$ 
and $\ell\ge \ell_0$; see below \eqref{eq:5.8-B}.

\begin{lem}  (cf.\ Lemma 5.7 of \cite{BFS}) \label{EE_1block}
Let $p\ge 4$ and let $f$ be a local $L^{p'}(\mu_{u_0})$ function on
$\mathcal{X}_N$ for some $p'>p$ depending only on $\eta_{\La_{\ell_0}}$  
such that $\tilde f(u_0)=0$ and $\tilde f'(u_0)=0$.  Then, there exists a constant 
$C = C(u_0,\ell_0)$ such that, for $\ell \geq  \ell_0$, and a function
$h$ on $[0,T]\times \T_N$, we have
\begin{align}   \label{eq:L5.5-1}
&E_{\mu_{u_0}}\Big[\sup_{0\leq t\leq T}\Big|\int_0^t\sum_{x\in{\T_N}}h(s,x)\\
& \hskip 10mm \times
\Big\{E_{\mu_{u_0}}\big[f(\tau_x\eta_s^N)|\eta_s^{N,(\ell)}(x)\big] 
-\frac{1}{2} \tilde f''(u_0) \Big((\eta_s^{N,(\ell)}(x) - u_0)^2- \tfrac{{\rm var}(u_0) }{2\ell +1}\Big)\Big\} ds \Big|^p\Big] 
\notag \\
& \hskip 5mm\leq \ CT^{p-1} N^p \ell^{-3p/2} \|f\|^p_{L^{p'}(\mu_{u_0})}
\int_0^T \frac{1}{N}\sum_{x\in \T_N}|h(t,x)|^p dt.
\notag
\end{align}
On the other hand, when only $\tilde f(u_0)=0$ is known, we have
\begin{align}  \label{eq:L5.5-2}
&E_{\mu_{u_0}}\Big[\sup_{0\leq t\leq T}\Big|\int_0^t\sum_{x\in{\T_N}} h(s,x) \\
& \hskip 20mm \times\Big\{E_{\mu_{u_0}}[f(\tau_x\eta_s^N)|\eta_s^{N,(\ell)}(x)] 
-\tilde f'(u_0)
\big(\eta_s^{N,(\ell)}(x)-u_0\big) \Big\} ds\Big|^p\Big] 
\notag  \\
& \hskip 10mm
 \leq \ CT^{p-1} N^p \ell^{-p} \|f\|^{p}_{L^{p'}(\mu_{u_0})} \int_0^T \frac{1}{N}\sum_{x\in \T_N}|h(t,x)|^p dt.  \notag
\end{align}
\end{lem}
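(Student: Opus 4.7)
\textbf{Proof proposal for Lemma \ref{EE_1block}.}

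The plan is to notice that, unlike Lemmas \ref{lem:1.22}--\ref{globaltwo-blocks}, the target bound carries the front factor $T^{p-1}N^p$ rather than $T^{(p-2)/2}N^{-p/2}$. This much weaker power of $N$ signals that It\^o--Tanaka should \emph{not} be invoked here: a direct static estimate using the equivalence of ensembles (Proposition~\ref{thm:EE}), combined with the stationarity of $\mu_{u_0}$ in time and the shift invariance of $\mu_{u_0}$ in space, will suffice. Set
\begin{align*}
r^{(2)}_{x,\ell}(\eta) := E^{\mu_{u_0}}\!\bigl[f(\tau_x\eta)\,\big|\,\eta^{(\ell)}(x)\bigr]
-\tfrac{1}{2}\tilde f''(u_0)\Bigl\{\bigl(\eta^{(\ell)}(x)-u_0\bigr)^2-\tfrac{\mathrm{var}(u_0)}{2\ell+1}\Bigr\},
\end{align*}
so that the integrand on the left-hand side of \eqref{eq:L5.5-1} is $\sum_{x\in\T_N}h(s,x)r^{(2)}_{x,\ell}(\eta_s^N)$.

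First, I would remove the supremum in $t$ by the crude bound $\sup_{0\le t\le T}|\int_0^t g(s)\,ds|\le\int_0^T|g(s)|\,ds$, then apply Jensen's inequality in the time variable to get $\bigl(\int_0^T|g(s)|\,ds\bigr)^p\le T^{p-1}\int_0^T|g(s)|^p\,ds$. Next, in the spatial sum I would apply Hölder's inequality, namely
\begin{align*}
\Big|\sum_{x\in\T_N}h(s,x)r^{(2)}_{x,\ell}(\eta_s^N)\Big|^p
\le N^{p-1}\sum_{x\in\T_N}|h(s,x)|^p\,|r^{(2)}_{x,\ell}(\eta_s^N)|^p.
\end{align*}
Taking expectation under $\mu_{u_0}$ and using that $\eta_s^N$ is stationary with marginal $\mu_{u_0}$, and that $\mu_{u_0}$ is shift invariant, reduces the right-hand side to
\begin{align*}
T^{p-1}N^{p-1}\,E^{\mu_{u_0}}\!\bigl[|r^{(2)}_{0,\ell}|^p\bigr]\int_0^T\sum_{x\in\T_N}|h(s,x)|^p\,ds
= T^{p-1}N^p\,E^{\mu_{u_0}}\!\bigl[|r^{(2)}_{0,\ell}|^p\bigr]\int_0^T\tfrac{1}{N}\sum_{x\in\T_N}|h(s,x)|^p\,ds.
\end{align*}

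The remaining task is to bound $E^{\mu_{u_0}}[|r^{(2)}_{0,\ell}|^p]$. This is precisely what the second-order equivalence of ensembles \eqref{eq:4.EE-A} in Proposition~\ref{thm:EE} delivers, assuming both $\tilde f(u_0)=0$ and $\tilde f'(u_0)=0$: it gives
\begin{align*}
\|r^{(2)}_{0,\ell}\|_{L^p(\mu_{u_0})}\le C(u_0,\ell_0)\,\ell^{-3/2}\|f\|_{L^{p'}(\mu_{u_0})},
\end{align*}
where $p'>p$ as in the statement, and this is applicable because $f$ is local on $\La_{\ell_0}$ and $\ell\ge\ell_0$. Substituting this yields the front factor $T^{p-1}N^p\ell^{-3p/2}\|f\|^p_{L^{p'}(\mu_{u_0})}$, completing the proof of \eqref{eq:L5.5-1}. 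The second assertion \eqref{eq:L5.5-2} is identical except that only $\tilde f(u_0)=0$ is assumed, and one invokes the first-order estimate \eqref{eq:4.EE-B} of Proposition~\ref{thm:EE}, which supplies $O(\ell^{-1})$ in place of $O(\ell^{-3/2})$ and hence $\ell^{-p}$ in place of $\ell^{-3p/2}$.

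I do not foresee a genuine obstacle here; the step that one must not overlook is the initial strategic decision \emph{not} to use It\^o--Tanaka. Once this is recognized, the only delicate points are (i) verifying that the conditioning under $\mu_{u_0}$ and under $\mu_{N,m}$ coincide for local $f$ on $\La_{\ell_0}$ with $\ell\ge\ell_0$ (already noted above the statement of the lemma, and established in the identities around \eqref{eq:1.cond-B}--\eqref{eq:5.8-B}), so that the equivalence-of-ensembles bound, stated under $\mu_{u_0}$, applies to the conditional expectation appearing in $r^{(2)}_{x,\ell}$; and (ii) checking that the constant $C(u_0,\ell_0)$ coming from Proposition~\ref{thm:EE} is uniform in $N$, $\ell$ and $\ga$, which it is because the estimate is purely static and involves only the product measure $\mu_{u_0}$.
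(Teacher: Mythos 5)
Your proposal is correct and follows essentially the same route as the paper: drop the supremum, apply H\"older/Jensen in time and space to extract the factor $T^{p-1}N^{p}$, use stationarity and shift invariance to reduce to $E^{\mu_{u_0}}[|r^{(2)}_{0,\ell}|^p]$, and conclude with Proposition~\ref{thm:EE}. The only cosmetic difference is the form of the spatial H\"older step (the paper first separates $(\sum_x|h|^q)^{p/q}$ and converts it to $\frac1N\sum_x|h|^p$ at the end), which is immaterial.
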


\begin{proof}
For the first estimate \eqref{eq:L5.5-1}, denoting the expression in
curly braces by $\psi_\ell(x,\eta_s^N)$, the left-hand side is bounded by
\begin{align*}
E_{\mu_{u_0}}&\Big[\Big(\int_0^T\sum_{x\in{\T_N}} |h(s,x)| \, |\psi_\ell(x,\eta_s^N)| ds \Big)^p\Big] \\
& \le T^{p/q} \int_0^T E_{\mu_{u_0}}\Big[\Big(\sum_{x\in{\T_N}} |h(s,x)| \, |\psi_\ell(x,\eta_s^N)|\Big)^p
\Big] ds\\
& \le T^{p/q} \int_0^T \Big(\sum_{x\in{\T_N}} |h(s,x)|^q\Big) ^{p/q}
\sum_{x\in{\T_N}} E_{\mu_{u_0}}\big[|\psi_\ell(x,\eta_s^N)|^p\big] ds
\end{align*}
by using H\"older's inequality twice, where $1/p+1/q=1$.  However, 
by stationarity and translation-invariance, 
the last expectation is equal to $E^{\mu_{u_0}}\big[|\psi_\ell(0,\eta)|^p\big]$, which is bounded by
$$
\Big(\frac{C\|f\|_{L^{p'}(\mu_{u_0})}}{\ell^{3/2}}\Big)^p
$$
by Proposition \ref{thm:EE}.  Thus, recalling $p/q=p-1$, the left-hand side
of \eqref{eq:L5.5-1} is bounded by
\begin{align*}
 T^{p-1}  &N \Big(\frac{C\|f\|_{L^{p'}(\mu_{u_0})}}{\ell^{3/2}}\Big)^p 
 \int_0^T \Big(\sum_{x\in{\T_N}} |h(t,x)|^q\Big)^{p/q} dt \\
& \le  C^p T^{p-1} N^p \ell^{-3p/2} \|f\|_{L^{p'}(\mu_{u_0})}^p
\int_0^T \frac1N \sum_{x\in{\T_N}} |h(t,x)|^p dt.
\end{align*}
The last line is shown by  H\"older's inequality noting $q\le p$.
This shows \eqref{eq:L5.5-1}.

The second estimate \eqref{eq:L5.5-2} is shown similarly.
\end{proof}

\begin{proof}[Proof of Theorem \ref{BG-2-B}]
Theorem \ref{BG-2-B} follows by first applying Lemma \ref{lem:1.22}
with $\ell=\ell_0$ and then Lemmas \ref{globaltwo-blocks} and \ref{EE_1block}.
\end{proof}

\appendix

\section{Discussion of the Littlewood-Paley-Stein inequality}
\label{sec:B}

The Littlewood-Paley-Stein inequality \eqref{eq:LPS-D} in Proposition 
\ref{prop:LPS} is stated in Shigekawa \cite{Shi}
Theorem 4.4 (4.16), $k=1$, for the Ornstein-Uhlenbeck generator $L_0$ on
an abstract Wiener space; i.e.\ for $L_0$ with linear drift.
This was extended to the distorted Laplacian on a Riemannian manifold by
Yoshida \cite{Y}.  See also \cite{Shi02}, \cite{SY}, \cite{Li}.

However, the estimates obtained in these papers are weaker than
\eqref{eq:LPS-D} as in the next Theorem \ref{thm:LPS}.  We state the 
estimate for the $\phi$-variables.  To rewrite it for the $\eta$-variables, 
we may replace $D_\phi$ by $\nabla^* D_\eta$ recalling \eqref{eq:d-phi}
or \eqref{eq:4.12-A}.

\begin{thm}  \label{thm:LPS}
Assume $V''\ge -\a$ for some $\a>0$ and $\int_\R e^{-V(\eta)} d\eta<\infty$.
Let $p\in (0,\infty)$.  Then, there exist constants $c=c_{p,\a}, C=C_{p,\a}>0$, 
which are uniform in $\ell$ and $m$, such that 
\begin{align} \label{eq:LPS-B}
c\big\{ \|f\|_{L^p(\bar\mu_{\ell,m})} + \|D_\phi f\|_{L^p(\bar\mu_{\ell,m})} \big\}
&\le \|(1-L_{0,\ell})^{1/2}f\|_{L^p(\bar\mu_{\ell,m})} \\
&\le 
C\big\{ \|f\|_{L^p(\bar\mu_{\ell,m})} + \|D_\phi f\|_{L^p(\bar\mu_{\ell,m})} \big\},
\notag
\end{align}
for $f \in W^{1,p}(\mathcal{Y}_{\ell,m}, \bar\mu_{\ell,m})$, where 
$D_\phi f= (\partial_{\phi(x)}f)_{x=-\ell+1}^{\ell-1]} \in \R^{2\ell-1}$.
\end{thm}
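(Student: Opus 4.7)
The approach is to verify the hypotheses of the Littlewood-Paley-Stein machinery for symmetric Markov semigroups with curvature bounded below, as developed by Shigekawa \cite{Shi} for Ornstein-Uhlenbeck and extended by Yoshida \cite{Y} to distorted Laplacians (see also \cite{Shi02}, \cite{SY}, \cite{Li}), and then to apply it directly. The decisive input is a uniform lower bound on the Bakry-Emery curvature of $\bar\mu_{\ell,m}$. Repeating the Hessian computation from the proof of Proposition \ref{prop:LPS}, for $\xi=(\xi_x)_{x=-\ell+1}^{\ell-1}\in\R^{2\ell-1}$ extended by $\xi_{-\ell}=\xi_\ell=0$,
\begin{align*}
(\mathrm{Hess}\,H_{\ell,m}\,\xi,\xi)
= \sum_{x=-\ell}^{\ell-1} V''(\phi(x+1)-\phi(x))(\xi_{x+1}-\xi_x)^2
\ge -\a\sum_{x=-\ell}^{\ell-1}(\xi_{x+1}-\xi_x)^2
\ge -4\a|\xi|^2,
\end{align*}
so $\bar\mu_{\ell,m}$ satisfies the curvature-dimension condition $CD(-4\a,\infty)$ with a constant independent of $\ell$ and $m$; the integrability assumption $\int_\R e^{-V}d\eta<\infty$, together with the Dirichlet boundary conditions built into the definition \eqref{eq:4.Y} of $\mathcal{Y}_{\ell,m}$, ensures that $\bar\mu_{\ell,m}$ is a genuine probability measure.

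For the upper bound in \eqref{eq:LPS-B}, write $T_t := e^{tL_{0,\ell}}$. The curvature bound gives the pointwise gradient estimate $|D_\phi T_t f|\le e^{4\a t}T_t|D_\phi f|$ (a direct consequence of $\Ga_2\ge -4\a\,\Ga$ combined with Jensen's inequality for the sub-probability kernel of $T_t$). Inserting this into the subordination formula
\begin{align*}
(1-L_{0,\ell})^{1/2}f = \frac{1}{2\sqrt{\pi}}\int_0^\infty t^{-3/2}\bigl(f-e^{-t}T_t f\bigr)\,dt
\end{align*}
and running a standard Littlewood-Paley $G$-function argument bounds $\|(1-L_{0,\ell})^{1/2}f\|_p$ by $C_{p,\a}(\|f\|_p+\|D_\phi f\|_p)$, the exponential factor $e^{4\a t}$ being absorbed by the decay of the Bessel kernel $t^{-3/2}e^{-t}$.

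For the lower bound, one shows separately that $\|f\|_p\le C\|(1-L_{0,\ell})^{1/2}f\|_p$ and $\|D_\phi f\|_p\le C\|(1-L_{0,\ell})^{1/2}f\|_p$. The first is elementary: $(1-L_{0,\ell})^{-1/2}=\pi^{-1/2}\int_0^\infty t^{-1/2}e^{-t}T_t\,dt$ is $L^p$-bounded because each $T_t$ is a contraction on $L^p(\bar\mu_{\ell,m})$, the semigroup of a symmetric probability measure being sub-Markovian. The second is the Riesz transform inequality and is the substantive Littlewood-Paley-Stein statement; under the curvature lower bound $CD(-\k,\infty)$ with $\k=4\a$, the proof of \cite{Shi}, \cite{Y} (and the parallel treatment in \cite{Shi02}, \cite{SY}, \cite{Li}) yields this bound with a constant depending only on $p$ and $\k$, hence only on $p$ and $\a$ here.

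The main obstacle is precisely the dimension-independence of the constants in the Riesz transform inequality: this is what provides uniformity in $\ell$, and it relies on the vector-valued Burkholder-type martingale inequalities that form the core of the Littlewood-Paley-Stein framework, not on anything elementary. Uniformity in $m$, on the other hand, is a mild reduction: the affine change of variables $\tilde\phi(x):=\phi(x)-m(x+\ell)$, $x\in[-\ell,\ell]$, identifies $\bar\mu_{\ell,m}$ with $\bar\mu_{\ell,0}$ while preserving both $D_\phi$ and $\mathrm{Hess}\,H_{\ell,m}$, so it suffices to establish \eqref{eq:LPS-B} at $m=0$. The reason this weaker statement is the best one can extract from \cite{Shi}, \cite{Y} in our generality, and why the sharper bound \eqref{eq:LPS-D} requires the stronger convexity assumption of Bakry \cite{Ba87}, is that the $t^{-3/2}e^{-t}$ kernel only accommodates exponential growth of the semigroup intertwining constant, whereas the cleaner $L^p$-equivalence for $(-L_{0,\ell})^{1/2}$ would force us to control $|D_\phi T_t f|$ uniformly in large $t$ with no spectral gap available at the $\phi$-level.
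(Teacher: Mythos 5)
Your proposal is correct and follows essentially the same route as the paper: the key step in both is the pointwise Hessian computation $(\mathrm{Hess}\,H_{\ell,m}\,\xi,\xi)\ge-\a\sum_x(\xi_{x+1}-\xi_x)^2\ge-4\a|\xi|^2$, uniform in $\ell$ and $m$, after which the result is obtained by invoking Theorem 1.1 of Yoshida \cite{Y} (the paper cites it directly, while you additionally sketch its internal mechanism via the gradient estimate, subordination and the Riesz-transform inequality). The extra detail you supply is consistent with the cited machinery and does not change the argument.
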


Note that, in \eqref{eq:LPS-D}, $(1-L_{0,\ell})^{1/2}$ is replaced by 
$(-L_{0,\ell})^{1/2}$
and the norm $\|f\|_{L^p(\bar\mu_{\ell,m})}$ is dropped from both sides
of \eqref{eq:LPS-B}.  The estimate \eqref{eq:LPS-B} is not sufficient for 
our purposes.

\begin{proof}
We apply Theorem 1.1 of \cite{Y}, taking $M=\mathcal{Y}_{\ell,m}\cong \R^{2\ell-1}$, 
$g=(\de^{ij})$ the usual flat metric, $F=M\times \R$, so that 
$\Ga(F) = C^\infty(M,\R)$, 
Ricci tensor Ric $\equiv 0$, $R_F\equiv 0$, $\rho(\phi)= H_{\ell,m}(\phi)$ given in
\eqref{eq:Hem}, $Au= \De u - \nabla\rho\cdot\nabla u$, $V\equiv 0$ in (1.2)
of \cite{Y} so that $\La = A \, (=2L_{0,\ell})$.
We check the condition $(\mathcal{M}')$ with $m(\cdot)$ defined in (1.4) of \cite{Y}.
Note that 
$$
\big((\nabla)^2\rho\big)(\phi) = \text{Hess}\, H_{\ell,m}(\phi)
\equiv (\partial_{\phi(x)}\partial_{\phi(y)} H_{\ell,m}(\phi))_{x,y\in [-\ell+1,\ell-1]}
\in \R^{2\ell-1} \otimes \R^{2\ell-1}.
$$
Then, according to the calculation in the proof of 
Proposition \ref{prop:LPS}, since $V''\ge -\a$, 
we have for $\xi=(\xi_x)_{x=-\ell+1}^{\ell-1}\in\R^{2\ell-1}
(=T_\phi\mathcal{Y}_{\ell,m}$, the tangent space at $\phi\in\mathcal{Y}_{\ell,m})$
with $\xi_{-\ell}=\xi_\ell=0$,
\begin{align*}
(\text{Hess}\, H_{\ell,m}\; \xi,\xi) 
\ge -\a \sum_{x=-\ell}^{\ell-1} (\xi_{x+1}-\xi_x)^2
 \ge -4\a \sum_{x=-\ell+1}^{\ell-1} \xi_x^2,
\end{align*}
since $(\xi_{x+1}-\xi_x)^2 \le 2(\xi_{x+1}^2+\xi_x^2)$.
By Theorem 1.1 of \cite{Y}, this shows \eqref{eq:LPS-B} with
constants $c, C>0$ depending only on $p$ and $\a$.
\end{proof}

\begin{rem}  \label{rem:1.5}
If we assume a stronger condition $\|V''\|_\infty+ \|V'''\|_\infty <\infty$, then
Theorem 1.2 of \cite{Y} is applicable and \eqref{eq:LPS-B} is shown by
taking $k=1$.  Note that the constants $c, C$
depend only on $p$, $k$, $\|(\nabla)^2\rho\|_{\infty,1}$, and we have
\begin{align*}
\|(\nabla)^2\rho\|_{\infty,1} = \|(\nabla)^2\rho\|_{\infty}+
\|(\nabla)^3\rho\|_{\infty}  \le 2(\|V''\|_\infty+ \|V'''\|_\infty),
\end{align*}
which is uniform in  $\ell$ and $m$.
\end{rem}

\section*{Acknowledgement}

The author thanks Nobuo Yoshida for his comments on the 
Littlewood-Paley-Stein inequality.

\end{document}